\documentclass{amsart}

\usepackage[
  hmarginratio={1:1},     
  vmarginratio={1:1},     
  textwidth=400pt,        
  heightrounded,          
]{geometry}
\usepackage{xcolor}
\usepackage{amsfonts}
\usepackage{mathrsfs}
\usepackage{fancyhdr}
\usepackage{amsmath,amsthm,amssymb}
\usepackage{enumerate}
\usepackage{hyperref}
\usepackage{array}
\usepackage{tabu}
\usepackage [english]{babel}
\usepackage{graphicx}
\usepackage [autostyle, english = american]{csquotes}
\usepackage[all,cmtip]{xy}
\usepackage{tikz-cd}
\usetikzlibrary{babel}
\usepackage{mathtools}
\usepackage[backend=biber, style=numeric]{biblatex}
\graphicspath{ {./images/}  }
\DeclareFieldFormat{postnote}{#1}
\MakeOuterQuote{"}
\newcommand\ii\item
\newcommand\ol\overline

\newcommand\CC{\mathbb C}

\newcommand\RR{\mathbb R}
\newcommand\ZZ{\mathbb Z}

\newcommand{\be}{\begin{enumerate}} 
\newcommand{\bii}{\begin{itemize}}
\newcommand{\bea}{\begin{enumerate}[(a)]}
\newcommand{\ee}{\end{enumerate}}
\newcommand{\eii}{\end{itemize}}
\newcommand\Vect{\mathrm{Vect}}

\newcommand\ooCat{(\infty,1)\mathrm{Cat}}

\newcommand\End{\mathrm{End}}
\newcommand\id{\mathrm{Id}}
\newcommand\C{\mathcal C}

\newcommand\Pshv{\mathrm{Pshv}}
\newcommand\pt{\mathrm{pt}}

\newcommand\Hom{\mathrm{Hom}}

\newcommand\Coh{\mathrm{Coh}}
\newcommand\IndCoh{\mathrm{IndCoh}}
\newcommand\QCoh{\mathrm{QCoh}}
\newcommand\Perf{\mathrm{Perf}}
\newcommand\supp{\mathrm{supp}}

\newcommand\A{\mathbb{A}}
\newcommand\GG{\mathbb{G}}
\newcommand\CCx{\mathbb{C}^\times}
\newcommand\MF{\mathrm{MF}}
\newcommand\musupp{\mu\mathrm{supp}}

\newcommand\res{\mathrm{res}}
\newcommand\ind{\mathrm{ind}}

\newcommand\muhom{\mu\mathrm{hom}}
\newcommand\Spec{\mathrm{Spec}}
\newcommand\Ext{\mathrm{Ext}}

\newcommand\unit{\mathrm{unit}}
\newcommand\Sym{\mathrm{Sym}}
\newcommand\ShvCat{\mathrm{ShvCat}}
\newcommand\xwcmod{\mathrm{Mod}_{\mathrm{Perf}(X)}^{wc}(\mathrm{Cat}_\infty^{\mathrm{perf}})}

\newcommand\perfxmod{\mathrm{Mod}_{\mathrm{Perf}(X)}(\mathrm{Cat}_\infty^{\mathrm{perf}})}
\newcommand\perfxamod{\mathrm{Mod}_{\mathrm{Perf}(X\times\mathbb{A}^1)}(\mathrm{Cat}_\infty^{\mathrm{perf}})}
\newcommand\dAff{\mathrm{dAff}}
\newcommand\ooGrpd{\infty\mathrm{Grpd}}

 \makeatletter
\newtheorem*{rep@theorem}{\rep@title}
\newcommand{\newreptheorem}[2]{%
\newenvironment{rep#1}[1]{%
 \def\rep@title{#2 \ref{##1}}%
 \begin{rep@theorem}}%
 {\end{rep@theorem}}}
\makeatother
 
\newtheorem{thm}{Theorem}[section]

\newtheorem{lem}[thm]{Lemma}
\newtheorem{rmk}[thm]{Remark}
\newtheorem{ex}[thm]{Example}
\newtheorem{defn}[thm]{Definition}

\newtheorem{stat}[thm]{Statement}
\newreptheorem{stat}{Statement}
\newreptheorem{thm}{Theorem}

\begin{document}
\title{Modeling Rozansky--Witten Theory with Sheaves of Categories}
\author{Enoch Yiu}

\begin{abstract}
We model Rozansky--Witten theory of $T^*X$ using modules over $\Perf(X\times\A^1)$ viewed as sheaves of categories over $X\times \A^1$.  This is in parallel to Tamarkin's approach to nonconic Lagrangians in $T^*X$ via the singular support of sheaves on $X\times\RR$.  More specifically, we construct the objects of Rozansky--Witten theory of $T^*X$ given by hybrid Lagrangians of which graphs and conormals are special cases.  As a consistency check, we show that $\Hom$s between such objects are given by suitable matrix factorizations.
\end{abstract}

\maketitle

\tableofcontents

\section{Introduction}

\subsection{Contents of this Paper}

In \cite{RW97}, Rozansky and Witten studied a three dimensional topological sigma model with a target being a hyperk\"{a}hler manifold (which is later extended to complex symplectic manifolds \cite{MK}), which gives rise to topological invariants of 3-manifolds, or invariants to complex symplectic manifolds called the weight system, by considering the path integral over the space of all maps from the closed oriented 3-manifolds to the the target \cite{kontsevich}\cite{MK}\cite{RS}.  A three dimensional topological quantum field theory gives rise to a $(\infty,3)$ symmetric monoidal functor from the framed bordism 3-category to a symmetric monoidal 3-category.  Now each complex symplectic manifold gives rise to such a theory, and hence such a functor.  This generalizes the topological invariants of 3-manifolds mentioned above since such a functor assigns a number to a 3-manifold, a vector space to a 2-manifold, a category to a 1-manifold and a 2-category to a 0-manifold.  By the cobordism hypothesis \cite{BD}\cite{LCH}, such a functor is determined by its assignment to a point.  In \cite{KAPUSTIN2009295}, Kapustin, Rozansky and Saulina studied the boundary conditions of Rozansky--Witten theory using path integral analysis and showed that the boundary conditions correspond to complex fibrations over complex Lagrangian submanifolds in the complex symplectic target.  Such boundary conditions forms a 2-category and in \cite{KR} Kapustin and Rozansky suggested a definition of 2-category associated to a complex symplectic manifold, which gives objects that are in the form of a graph of a differential $df$ (and fibrations over them).  It is also suggested that the general objects of the 2-category should be sheaves of categories over complex Lagrangian submanifolds \cite{KAPUSTIN2009295}\cite{KR}.  The Rozansky--Witten theory of $T^*X$ is known to be related to sheaves of categories over $X$ \cite{KAPUSTIN2009295}\cite{KR}.  This is a 3d analogue of the 2d story where the Fukaya category of a cotangent bundle being related to sheaves on the base \cite{NZ}.  The big difference, however, is that for holomorphic branes there are no instantons, so everything in Rozansky--Witten theory is local and for a general complex symplectic target $Y$ one should be able to patch from local cotangent covers.  The rough idea is that by taking singular support, in the sense of Stefanich et al \cite{stefanich}, which is analogous to the definition of singular support in \cite{AG}\cite{BIK}, we would obtain a conical subset in $T^*X$.  Under some notion of ``weakly constructibility" analogous to the singular support of a sheaf \cite{KS}, one would hope to obtain the Lagrangian condition.  In this paper, we construct objects that are "in between" these two cases, namely sheaves of categories in $X\times \A^1$ whose microsupports are hybrid Lagrangians of the form $df + T_Z^*X$, where $Z$ is a smooth subscheme of $X$.  This can be considered as the deformation of the $\Perf(X)$-module category $\Perf(Z)$ whose singular support is the conic Lagrangian $T_Z^*X$ as mentioned in \cite[Lesson 3.2.3]{T}.  One of the guiding motivations of the author is the following statement by Teleman \cite{T}:
\begin{stat}
\label{teleman}
Pure topological gauge theory in 3 dimensions for a compact Lie group $G$ is equivalent to the Rozansky-Witten theory for the BFM space of the Langlands dual Lie group $G^\vee$.
\end{stat}
In the abelian case, this means that categories with a topological torus action, is equivalent to the Rozansky--Witten theory for $T^*(\CCx)^n$, where $(\CCx)^n$ is the dual complexified torus.  Under this equivalence, the trivial representation corresponds to the conormal bundle at $1\in(\CCx)^n$, and the regular representation corresponds to the zero section.  Suppose we are given a category $\mathcal C$ with a torus action, the $\Hom$ of the trivial representation with $\mathcal C$ gives us $\mathcal C^T$, the category of fixed objects, and the $\Hom$ of the regular representation with $\mathcal C$ gives us the underlying category.  So now suppose $\mathcal C$ corresponds to the Lagrangian $L$ in $T^*(\CCx)^n$, the category of fixed objects correspond to the $\Hom$ between the $T_1^*(\CCx)^n$ and $L$, and the underlying category corresponds to the $\Hom$ between the zero section and $L$.  In Rozansky--Witten theory, the $\Hom$ between $L$ and $M$ is supposed to be supported on $L\cap M$, so if $L$ and $M$ are disjoint, the $\Hom$ between them is zero.  Under the above equivalence, this means that given a Lagrangian that does not intersect the zero section but intersects the conormal at $1$, there are categories with nontrivial torus action but with a trivial underlying category.  Part of this story can be described by Fourier--Mukai transform \cite{T}\cite{AD}:

When $G$ is $S^1$, the Fourier--Mukai transform gives an equivalence between local systems of $S^1$ with convolution and quasicoherent sheaves on $\CCx$ with the standard tensor product.  Therefore modules over local systems of $S^1$, i.e. categories with a topological $S^1$ action is equivalent to modules over $\QCoh(\CCx)$, i.e. sheaves of categories over $\CCx$.  To get to the Rozansky--Witten theory of $T^*\CCx$, with objects being Lagrangians in $T^*\CCx$, we take the singular support of the modules over $\QCoh(\CCx)$, we ignore the sizes of the modules for now, which would be discussed in Section 5.  But taking singular support only gives us conic Lagrangians and so the Fourier--Mukai transform described above describes only the conic story.  This paper attempts to extend the conic B-side story to nonconic objects.  In particular, we construct many objects with its nonconic microsupport not intersecting the zero section.

In Section 2, we first go over some needed notions in derived geometry.  The treatment mainly follows \cite{gr}.  Even though the main objects we consider are classical, we need derived schemes because we are taking intersections of subschemes and also the use of sheaves of categories and integral transform formulas of perfect and coherent sheaves as in \cite{bznp2017}.  In Section 3, we go over the definitions of categories of singularities and matrix factorizations as in \cite{Orlov_2003} and \cite{preygel2011thomsebastianidualitymatrix}.  In Section 4, we go over the proposal in \cite{KR}, of modelling the boundary conditions of Rozansky--Witten theory using matrix factorizations.  We also discuss the relationship between topological gauge theories and Rozansky--Witten theory in \cite{T} and give a sketch of a construction of a nontrivial $S^1$ action on the trivial category.  In Section 5, we use the definition of singular support of $\Perf(X)$-modules by Stefanich et al \cite{stefanich} to calculate the singular support of objects of the form $\Perf(Z)$ where $Z$ is a smooth subscheme.  The singular support of such objects is the conormal $T_Z^*X$ and finite colimit of such objects will give a subset of the unions of singular support, so we obtain objects that have subsets of unions of conormals as singular support.  These objects, at least those with Lagrangian singular support since we only know such colimits have isotropic singular support, give a part of the conic part of Rozansky--Witten theory, where the full conic part should be $\Perf(X)$-modules that are ``weakly constructible", i.e. modules that have Lagrangian singular support, which we denote this category by $\mathrm{Mod}_{\Perf(X)}^{wc}(\mathrm{Cat}_\infty^{\mathrm{perf}})$ which is a full subcategory of $\mathrm{Mod}_{\Perf(X)}(\mathrm{Cat}_\infty^{\mathrm{perf}})$.  In Section 6, we extend the conic story to include nonconic objects using Tamarkin's trick.  Given an exact Lagrangian $L$ in $T^*X$, such that the canonical 1-form in $T^*X$ restricts on $L$ as $df$ for some function $f$ which is chosen up to a constant on $L$.  We can construct a conic Lagrangian in $T^*(X\times \A^1)$ by scaling the cofiber directions of $\{(l, f(l),1)\in T^*X\times T^*\A^1 \simeq T^*(X\times \A^1): l\in L\}$.  There's a Hamiltonian action of $\A^1$ on $T^*(X\times \A^1)$ given by translation of the $\A^1$ base coordinate which corresponds to different choices of $f$.  The momentum map $\mu$ of this action is the projection to the $(\A^1)^\vee$ codirection.  Now the Hamiltonian reduction $T^*(X\times\A^1)//\A^1 = \mu^{-1}(1^\vee)/\A^1$ is just $T^*X$ and maps the conic Lagrangians we constructed back to $L$.  We also define the map
$$\rho: T^*X \times T_{\tau\neq 0}^*\A^1 \rightarrow T^*X, \quad (x,\xi,y,\tau) \mapsto (x,\frac{\xi}{\tau}),$$
so that for modules $M$ whose singular support does not lie in $\tau = 0$, we define the microsupport to be $\musupp(M) = \rho(SS(M)\cap \{\tau \neq 0\})$.  Notice that in our construction of conic Lagrangian from $L$, we scaled the cofiber directions, so having $\xi / \tau$ is the same as restricting to $\tau = 1$ as taking the inverse image of $1^\vee$ in Hamiltonian reduction.  Using this definition of microsupport we constructed objects of the form
$$C_{f,Z} = \Perf(y+f = 0, Z \, \,\mathrm{smooth}\,\,\mathrm{subscheme}),$$
with microsupport $df + T_Z^*X$ which are objects in the category
$$\mathcal C^{pre}(X) = \mathrm{Mod}_{\Perf(X\times\A^1)}^{wc}(\mathrm{Cat}_\infty^{\mathrm{perf}}) / \{M: SS(M)\subset \{\tau = 0\}\}.$$
The Hom's of such objects are then calculated to be
\begin{repthm}{hom_thm}
$$\Hom_{\mathcal C^{pre}(X)}(C_{f,Y},C_{g,Z}) = \Coh((Y\times_X Z)_0) / \Perf((Y\times_X Z)_0),$$
the category of singularities of the fiber of $g-f$ at the (derived) intersection at zero.
\end{repthm}
Now notice that we have not mimicked the whole process of Hamiltonian reduction, we have yet to quotient out $\A^1$.  What we instead do is to take $\mathcal C(X)$ to be the objects $C_{f,Z}$ and the finite colimit of such objects.  We define the $\Hom$ to be
$$\Hom_{\mathcal C(X)}(C_{f,Y},C_{g,Z}) = \bigoplus_{s\in k}\Hom_{\mathcal C^{pre}(X)}(C_{f,Y},C_{g+s,Z}),$$
and so we have
\begin{repthm}{mf_thm}
$$\Hom_{\mathcal C(X)}(C_{f,Y},C_{g,Z}) = \mathrm{MF}(Y\times_X Z, g-f).$$
\end{repthm}
When $Y\times_X Z$ is smooth, this category coincides with Kontsevich's defintion of category of B-branes \cite{Orlov_2003}.  In the simpler case where $Y=Z=X$, we recover the expected result in Rozansky--Witten theory
\begin{repthm}{mf_thm_simple}
$$\Hom_{\mathcal C(X)}(C_{f,X},C_{g,X}) = \mathrm{MF}(X, g-f).$$
\end{repthm}
The definition of $\Hom$ is inspired by orbit categories \cite{keller2005}\cite{fan2025dgenhancedorbitcategories}\cite{Chen_2024}.  Under such a $\Hom$, the objects $C_{f,Z}$ and $C_{f+s,Z}$ are identified.  $\mathcal C(X)$ is obviously not the full nonconic story, but are only a part of it.  Since $\A^1$ acts on $X\times \A^1$ by translation, we have an action
$$\Coh_c(\A^1)\otimes \Perf(X\times\A^1)\rightarrow \Perf(X\times\A^1),$$
and hence also an action its modules, where $\Coh_c(\A^1)$ is the category of coherent sheaves on $\A^1$ with finite support with convolution as its monoidal product.  A larger part of the story should be described by the coinvariants construction given by
$$\mathcal C^{pre}(X) \otimes_{\Coh_c(\A^1)} \mathrm{f.d.}\,\Vect.$$
The Hom construction above given by direct sum which is really a colimit would then be the Hom in the coinvariants restricted to the objects mentioned above.  Note that this is still not the full story as this process can only obtain exact Lagrangians, the Lagrangian $dx/x$ is not exact in $T^*\CCx$ and we would not be able to obtain it locally since K\"ahler differentials are not locally exact.

\subsection{Related Works}

Our work has a lot of similarities with \cite{benzvi2025potentcategoricalrepresentations}.  In this work, Ben-Zvi and Nadler considered potent categorical $G$-representations, which are cyclic ind-coherent sheaves of categories in the loop space $\mathcal L BG = G/G$.  One thing to note is that the $G$-representations considered here and the corresponding statements in this work regarding Statement 1.1.1 are ind-coherent sheaves of categories and not actual categories with a $G$ action.  Nevertheless, the authors give a mirror equivalence in the case of abelian $G$ and discusses the case for general gauge group $G$, by modeling the suitable $G$-representations to be cyclic ind-coherent sheaves of categories in the space $T[-1]BG = \mathfrak g/G$.  One difference between our work and \cite{benzvi2025potentcategoricalrepresentations} is the sizes of categories we work with.  In \cite{benzvi2025potentcategoricalrepresentations}, the authors work with ind-coherent sheaves of categories $2\IndCoh$, which is based on the work of Stefanich \cite{GS}.  This category $2\IndCoh$ is not determined by global sections contrary to quasicoherent sheaves of categories \cite{G2015} and is not the category of modules over a monoidal category.  In our work, on the other hand, we mainly work with $\mathrm{Mod}_{\Perf(X)}(\mathrm{Cat}_\infty^{\mathrm{perf}})$, which is a large category of small categories.  Besides the size difference, in constructing our category $\mathcal C(X)$, our idea is to first microlocalize away from the 0-section of the $T^*\A^1$ component, then look at the coinvariants.  In \cite{benzvi2025potentcategoricalrepresentations}, the authors first construct the periodic base
$$\mathcal A = 2\IndCoh(\GG_a) / 2\QCoh(\GG_a),$$
then periodize the indcoherent sheaves of categories by tensoring with $\mathcal A$:
$$2\IndCoh^\pi(X) = 2\IndCoh(X)\otimes \mathcal A$$
regarding as an $\mathcal A$ module.  This construction is the same as microlocalizing away from the 0-section, however instead of modding out the $\GG_a$ action, the authors remembers the $\GG_a$ action by working with the periodization as an $\mathcal A$-module with the convolution symmetric monoidal structure of $\mathcal A$ coming from the the addition in $\GG_a$.  The authors also give a Fourier transform theorem:

\begin{thm}
\cite{benzvi2025potentcategoricalrepresentations} For $V$ a finite dimensional vector space with dual $V^*$, we have
$$2\IndCoh^\pi(V) \simeq 2\IndCoh^\pi(V^*)$$
whose cyclic trace recovers the Fourier transform on $\mathcal D$-modules $\mathcal D(V)\simeq \mathcal D(V^*)$.
\end{thm}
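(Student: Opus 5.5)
The plan is to deduce the equivalence $2\IndCoh^\pi(V)\simeq 2\IndCoh^\pi(V^*)$ from the decategorified Fourier--Mukai picture, run one categorical level up. I treat $V\times V^*\times\GG_a$ as carrying the pairing kernel, and take the periodic base $\mathcal A=2\IndCoh(\GG_a)/2\QCoh(\GG_a)$ to play the role of a ``categorified exponential''.

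\textbf{Step 1 (kernel).} Let $\langle\,,\rangle: V\times V^*\to\GG_a$ be the pairing. I take the kernel $K$ to be the pullback along $\langle\,,\rangle$ of the unit object of $\mathcal A$, namely the image of the skyscraper $2$-sheaf at $0\in\GG_a$ viewed as an object of $2\IndCoh(V\times V^*)\otimes\mathcal A$. This is the analogue of $\mathcal O_{y+\langle x,\xi\rangle=0}$ in the paper's $C_{f,Z}$ construction. The transform $\Phi$ is then pull--tensor--push along $V\leftarrow V\times V^*\to V^*$, followed by convolution in the $\GG_a$ factor. This is $\mathcal A$-linear, because convolution in $\mathcal A$ comes from addition on $\GG_a$.

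\textbf{Step 2 (inverse).} I define $\Psi$ using the kernel pulled back along $-\langle\,,\rangle$. The composite $\Psi\circ\Phi$ has kernel on $V\times V\times\GG_a$ given by pushing the pullback of the unit along $(x,x',\xi)\mapsto\langle x-x',\xi\rangle$ forward along the $V^*$ direction. The key computation is the categorified orthogonality
\[
\pi_{*}\,\langle\,,\rangle^{*}(\text{unit}) \simeq \delta_{0}\quad\text{in } 2\IndCoh(V)\otimes\mathcal A ,
\]
where $\pi: V\times V^*\to V$. I reduce this to $\dim V=1$ via Künneth for $2\IndCoh$ (Stefanich). In one dimension the statement becomes: the pushforward along $\GG_a\times\GG_a\to\GG_a$, $(x,\xi)\mapsto x$, of the pullback of the unit along the multiplication map $x\xi$ is $\delta_0$ modulo $2\QCoh$. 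Away from $x=0$, the map $\xi\mapsto x\xi$ is an isomorphism. The pushforward is therefore the pullback along $\GG_a\to\pt$ of a constant family, which lies in the image of $2\QCoh(\GG_a)$ and so dies in $\mathcal A$. Hence only the contribution at $x=0$ survives.

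\textbf{Step 3 (cyclic trace).} Taking cyclic traces ($S^1$-equivariant Hochschild invariants, or the trace of the identity) turns $2\IndCoh(V)$ into $\IndCoh$ of the loop space. Periodization turns $\mathcal A$ into the trace of $\mathcal A$, which should be $\mathcal D$-modules on $\GG_a$ modulo $\QCoh$, i.e.\ the exponential twist. Under this, the kernel's trace becomes $e^{\langle x,\xi\rangle}$, so $\mathrm{Tr}(\Phi)$ is the usual Fourier transform $\mathcal D(V)\simeq\mathcal D(V^*)$. This follows from functoriality of traces applied to kernels.

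\textbf{Main obstacle.} I expect the main obstacle to be Step 2, specifically the claim that the fiberwise-constant contribution lies in $2\QCoh$. In $2\IndCoh$ one cannot argue by global sections, because the category is not determined by them (unlike \cite{G2015}). I would therefore first try to establish the vanishing after restricting to the open set $x\neq0$. I would then use a localization sequence for $2\IndCoh$ with respect to the closed point to pin down the remaining contribution at $0$.
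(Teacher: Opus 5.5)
The paper gives no proof of this statement; it is quoted from Ben-Zvi--Nadler. So your sketch has to stand on its own, and as written it has a genuine gap in Step 2, though not where you locate it.

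Your set-up is the natural Tamarkin-style one. The kernel should be the graph $2$-sheaf $\{t+\langle x,\xi\rangle=0\}$ on $V\times V^*\times\GG_a$, i.e.\ the unit pulled back along $(x,\xi,t)\mapsto t+\langle x,\xi\rangle$, exactly as for the paper's $C_f$. Pulling back along $\langle\,,\rangle$ itself would give a conic object.

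The vanishing over $x\neq 0$ is the easy part. There the graph maps isomorphically onto $\{x\neq0\}\times\GG_a$, so the pushforward is the unit, which lies in $2\IndCoh(\{x\neq 0\})\otimes 2\QCoh(\GG_a)$.

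The whole content of Fourier inversion sits at $x=0$, and there you pass from \emph{only the contribution at $x=0$ survives} to $\delta_0$ with no argument. At $x=0$ the entire fibre $V^*$ of the graph lies over the single point $t=0$. Any fibrewise computation there therefore sees $\delta_{t=0}\otimes\QCoh(V^*)$, a $\QCoh(V^*)$'s worth of data rather than one copy. Nothing plays the role of the one-dimensionality of the de Rham pushforward of $\mathcal O_{V^*}$, which is what makes $\int e^{x\xi}\,d\xi=\delta_0$ work for $\mathcal D$-modules.

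A localization sequence for the closed point does not fix this. It only shows the answer is set-theoretically supported at $0$. Moreover, $2\IndCoh$ has no Kashiwara-type equivalence: objects supported at $0$ still carry an action of $\IndCoh(\Omega_0 V)$, i.e.\ microlocal data along $T^*_0V$. So support alone does not single out $\delta_0$.

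What is missing is a microlocal computation showing that, once $\tau$ is inverted, the $\xi$-directions are absorbed into the codirections at $0$. In the paper's $\Perf$-module model with $\dim V=1$, this is the same kind of calculation the paper does for $\Hom(C_f,C_g)$.

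\begin{itemize}
\item The graph $\{t=-x\xi\}\cong\A^2_{x,\xi}$ maps to $\A^2_{x,t}$ by $(x,\xi)\mapsto(x,-x\xi)$. Let $P$ be the pushforward.
\item The derived fibre over $(0,0)$ is $F=\A^1_\xi\times\Spec k[\epsilon]$ with $|\epsilon|=-1$; take $\epsilon=\epsilon_t+\xi\epsilon_x$ in the Koszul complex.
\item The based loops $\Spec k[\eta_x,\eta_t]$ act on $F$ by $\epsilon\mapsto\epsilon+\eta_t+\xi\eta_x$.
\item Koszul dually, with $u,\alpha,\tau$ of degree $2$ dual to $\epsilon,\eta_x,\eta_t$, the category $\Hom(\delta_{(0,0)},P)\simeq\Coh(F)\simeq\Perf(k[\xi,u])$ is a module over $\End(\delta_{(0,0)})\simeq\Perf(k[\alpha,\tau])$ through $\tau\mapsto u$, $\alpha\mapsto\xi u$.
\item After inverting $\tau$ this ring map is an isomorphism, with $\xi=\alpha/\tau$. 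So $P$ has the same rank-one microlocal stalk as $\delta_{(0,0)}$.
\end{itemize}

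You would still need a natural comparison map inducing this identification, and a reason it is an equivalence away from $\tau=0$. Restriction to the point $\xi=0$, for instance, only sees $\alpha=0$. This categorified $\int e^{x\xi}\,d\xi=\delta_0$ is the real theorem, and the proposal does not contain it.

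Step 3 is likewise only heuristic. Two things are asserted rather than argued: the identification of $\mathrm{Tr}(\mathcal A)$ (\emph{$\mathcal D$-modules on $\GG_a$ modulo $\QCoh$} is not made precise), and the claim that the trace of the graph kernel is $e^{\langle x,\xi\rangle}$. Before functoriality of traces gives anything, you need the description of $S^1$-equivariant $\IndCoh(\mathcal LV)$ as $\hbar$-connections, and an account of how periodization turns these into $\mathcal D$-modules.
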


The authors also gave an ansatz:
$$RW(T^*X) = 2\IndCoh^\pi(X),$$
which proposes $2\IndCoh^\pi(X)$ as a model for Rozansky--Witten theory of $T^*X$.  Note that this category does have nonconic objects based on the discussion above.  However although $2\IndCoh$ has very nice functorial properties, objects in the category may be considered too big since there is no restrictions on the singular supports, namely the singular supports are no longer Lagrangians.  For example, infinite union of Lagrangian singular supports can appear.  The authors also extend this definition to open sets $U$ in $T^*X$ following Kashiwara--Schapira's theory of microlocal sheaves \cite{KS} by defining
$$RW(U) = 2\IndCoh^\pi(X) / \{\text{branes supported away from } U\}.$$
Further extending this, the authors consider a Hamiltonian reduction $M$ from an open set $U$, and the functoriality of $2\IndCoh^\pi$ gives a definition of $RW(M)$ in terms of a monad acting on $RW(U)$.  Both our work and \cite{benzvi2025potentcategoricalrepresentations} don't seem to have a straight forward way to go further than this, in the sense that there is no algebraic Darboux theorem as explained in \cite{366422}, so gluing using cotangent covers for a general complex symplectic $Y$ is not viable.  In constrast to \cite{Brav_2015}, where the authors work directly with patches and glue, both our work and \cite{benzvi2025potentcategoricalrepresentations} go from $X$ or rather $X\times\GG_a$ to $T^*X$ and are not working with patches directly.  Ths authors of \cite{D_Agnolo_2007} on the other hand works analytically rather than algebraically.

Our work can be viewed as a sheaf of categories quantization of the complex symplectic manifold $T^*X$, where we construct sheaf of categories with microsupport on complex Lagrangians of the form $df + T_Z^*X$ and subsets of unions of these.  In \cite{D_Agnolo_2007}, for each smooth complex Lagrangian $L$ in a complex symplectic manifold $X$, D'Agnolo and Schapira constructed twisted simple holonomic deformation quantization modules $M_L$ supported on it.  On the other hand, given algebraic Lagrangians $L$, $M$ in $X$, $L\times_X M$ is $-1$-shifted symplectic \cite{ptvv}.  This equips the associated classical scheme with a d-critical locus structure \cite{bbj} in the sense of \cite{joyce_2013} and in \cite{Brav_2015}, Brav et al constructed a perverse sheaf $P_{L,M}$ on the intersection up to canonical isomorphism by gluing sheaves of vanishing cycles in critical charts that exist due to the d-critical locus structure.  The complex analytic case is proven in \cite{bussi2014categorificationlagrangianintersectionscomplex}.  In \cite{gunningham2024deformationquantizationperversesheaves}, Gunningham and Safronov proved that
$$\Hom(M_L,M_M)[\mathrm{dim}X] \simeq P_{L,M}\otimes_\CC \CC((\hbar)),$$
where $P_{L,M}$ is taken to be the perverse sheaf over $\CC$ as mentioned above.  The authors also constructed a holomorphic Fukaya category $\mathbb DRH_X$ with objects $M_L$ for each complex Lagrangian submanifold $L$ and that given a pair of Lagrangians $L,M$ 
$$\Hom_{\mathbb DRH_X}(M_L, M_M)[\mathrm{dim} X/2] \simeq P_{L,M},$$
so the graded Hom space is given by the cohomology of $P_{L,M}$.  Since the periodic cyclic homology of the category of matrix factorizations is the cohomology of vanishing cycles, one can view that the category of matrix factorizations as a categorification of vanishing cycles \cite{efimov}\cite{brtz}.  Some have conjectured a categorification of the result of gluing perverse sheaves of vanishing cycles on a $-1$-shifted symplectic scheme $X$, i.e. the gluing of matrix factorization categories locally given by $\mathrm{MF}(U, W)$ the scheme is locally modelled as the critical locus of the potential $W: U\rightarrow \A^1$  \cite{joyceppt}\cite{bussi2014categorificationlagrangianintersectionscomplex}\cite{toenicm}\cite{toda}\cite{todaz2}, which in particular includes the case where $X$ is an intersection of complex Lagrangians.  The gluing of categories of matrix factorizations would be extremely helpful in the construction of the Kapustin--Rozansky 2-category.  In \cite{hennion2025i}\cite{hennion2025ii}, Hennion et al showed that for singularity invariants on a $-1$-shifted symplectic derived Deligne--Mumford stack $X$ that satisfy a form of isotopy invariance and a quadratic stabilization formula coming from Kn\"{o}rrer periodicity, which include milnor numbers and the perverse sheaf of vanishing cycles, such local invariants can be glued into a global invariant on $X$.  The authors hence recovered the results in \cite{behrend} and \cite{Brav_2015}.  Without establishing isotopy invariance for matrix factorizations, the authors nonetheless proved that the ind-completion of the category of matrix factorizations $\mathrm{IndMF}(U,W)$ can be glued along $X$ as a sheaf of dualizable crystals up to isotopies of functors.  In our work, we bypassed the difficulty of gluing in the case of $T^*X$.

On another direction, in \cite{R1}\cite{R2}, Riva constructed a commutative Rozansky--Witten 3-category with objects being symplectic derived stacks, 1-morphisms being Lagrangian spans, 2-morphisms between Lagrangian spans $X \leftarrow L_i \rightarrow Y$ being the $\ZZ/2\ZZ$-graded quasicoherent sheaves on the derived intersection $L_1\times_{X\times Y} L_2$.  The reason this is called a commutative theory is because if we look at the category of boundary conditions of target $X$, its objects are the Lagrangian spans of the form $\mathrm{pt} \leftarrow L\rightarrow X$, and the Hom between Lagrangians in this model will be $\ZZ/2\ZZ$-graded quasicoherent sheaves on the derived intersection of Lagrangians instead of matrix factorizations.  In \cite{Brunner_2023}\cite{Brunner_2025}, Brunner, Carqueville and Roggenkamp studied truncated affine Rozansky--Witten 3-category whose objects are of the form $T^*\CC^n$, 1-morphisms being graphs of differentials of polynomials and 2-morphisms being matrix factorizations.  The authors proved that none of the objects are fully dualizable since the targets $T^*\CC^n$ are not compact, but after truncating to 2 dimensions, with 2-morphisms being homotopy equivalence classes, the objects are fully dualizable.  So the authors proved that these objects gives rise to fully extended TFTs.  The authors also proved that all of the 1-morphisms have canonically identified left and right adjoints.  In \cite{R2}, Riva proved that there exists a symmetric monoidal 2-functor that maps the truncated affine models to homotopy 2-category of the commutative Rozansky--Witten 3-category which image lands in the subcategory spanned by cotangent stacks of the form $T^*\CC^n$, and that the functor is surjective on those objects.

\subsection{Conventions}

We shall work in the realm of $\infty$-categories in the sense of \cite{L}, where an $(\infty,1)$-category is a weak Kan complex, a simplicial set that satisfies the inner horn condition.  From then on by category we will mean an $(\infty,1)$-category, by an $n$-category we will mean an $(\infty,n)$-category.  If we fix a ring $k$ of characteristic 0, then the $k$-linear stable $(\infty,1)$-categories are equivalent to $k$-linear pre-triangulated dg categories \cite{cohn2016differentialgradedcategoriesklinear}.  From now on, we would treat the two as synonymous.

Given a monoidal category $\mathcal C$ and an algebra object $A$ in $\mathcal C$, we can define the module objects in $\mathcal C$, which are objects $M$ in $\mathcal C$ with unital and associative maps $A\otimes M\rightarrow M$ up to coherent homotopy \cite{HA}.  Such modules form a category we denote as $\mathrm{Mod}_A(\mathcal C)$.  Throughout the paper, we make references of $\Perf(X)$ modules and $\QCoh(X)$ modules and by this we mean $\mathrm{Mod}_{\Perf(X)}(\mathrm{Cat}_\infty^{\mathrm{perf}})$ and $\mathrm{Mod}_{\QCoh(X)}(\mathcal{P}r_{st}^L)$, which we mean by module objects in the category of small stable idempotent complete categories over $\Perf(X)$ and module obejcts in the category of presentable stable categories over $\QCoh(X)$ respectively, where both $\Perf(X)$ and $\QCoh(X)$ has the standard tensor monoidal product.

\subsection{Acknowledgements}
The author would like to thank his advisor, David Nadler, without whom this paper would not be possible.  The author has benefitted immensely from his mathematical vision, expertise, guidance and encouragement throughout this whole journey.  The author would also like to thank Germ\'{a}n Stefanich who generously shared his ideas and insights with me which formed the basis of this paper.  The author also thank David Ben-Zvi, Dave Benson, Andrew J. Blumberg, Dennis Chen, Eita Haibara, Christopher Kuo, Zhouhang Mao, J. Peter May, Aaron Mazel-Gee, Maxime Ramzi, Constantin Teleman and the mathoverflow user Thorgott.  The author was partially supported by Croucher Foundation.

\section{Primer on Derived Geometry}

\subsection{Derived Schemes and Stacks}
Throughout this paper, the spaces that we deal with will mainly be smooth, quasi-compact, Noetherian schemes, but we also deal with the intersection of subschemes.  To obtain the correct intersections, we will need to take the derived fiber product which results in derived schemes.  For example, if we look at the intersection of $\{x=0\}$ and $\{y=0\}$ and the intersection of $\{x=0\}$, $\{y=0\}$ and $\{x=y\}$ both in $\A^2$, scheme theoretically they are equal, but in derived geometry, if we take derived intersections these two are not the same.  We will also make use of integral transforms of sheaves \cite{bzfn2010}\cite{bznp2017} which take place in the derived world.  Even though the notion of derived schemes is enough for our purposes, we will start with derived prestacks, which are the most basic object in derived geometry and start from there.  Derived stacks are just derived prestacks that satisfies descent, or in other words those that form a sheaf, and this notion generalizes classical stacks both in the source and in the target: the source of the functor goes from the category of commutative rings to the category of derived rings, either using simplicial commutative rings or connective commutative differential graded algebras when in characteristic zero; the target of the functor goes from groupoids to $\infty$-groupoids or Kan complexes.  Derived schemes are just derived stacks that has a Zariski atlas.

We will follow closely the point of view and results presented in \cite{gr}.  The advantage of this is that derived stacks and schemes are the derived prestacks that satisfy certain properties instead of having added structure, we can also define geometric objects like quasicoherent sheaves in full generality on prestacks.  An introduction to the viewpoint of derived geometry can be found in \cite{Anel_2021}\cite{Khan2025}\cite{EP}.  We start with the building blocks of derived geometry:

\begin{defn}
The category of derived affine schemes is defined to be the opposite category of connective commutative differential graded algebra over $k$, which is denoted $\dAff$.  A dg algebra $A$ is connective if $A^i = 0$ for $i>0$ up to quasi-isomorphism.
\end{defn}

Now we define the notion of derived prestack, which is the most general form of space one can build using derived affine schemes.

\begin{defn}
The category of derived prestacks is given by the category of functors from the opposite category of $\dAff$ to $\ooGrpd$, the category of $\infty$-groupoids, i.e. a derived prestack is a $\ooGrpd$-valued presheaf on $\dAff$.
\end{defn}

\begin{defn}
Suppose $f:  X\rightarrow  Y$ is a map of derived prestacks.  Then $f$ is affine schematic if for every $S$ in $\dAff_{/ Y}$, the derived prestack $S\times_{ Y}  X$ can be represented by a derived affine scheme.
\end{defn}

With the notion of derived prestack being a presheaf, this suggests that with some topology on $\dAff$ we would be able to single out a class of prestacks that satisfy some descent condition that makes it a sheaf, which we would call derived stacks.  We will first go over how to define a Grothendieck topology on $\dAff$ following \cite{tv2}\cite{gr}.

\begin{defn}
A map $\Spec(B)\rightarrow \Spec(A)$ between derived affine schemes is flat if $H^0(B)$ is a flat $H^0(A)$-module and that one of the following equivalent conditions hold:
\begin{itemize}
\item The natural map
$$H^0(B)\otimes_{H^0(A)} H^*(A) \rightarrow H^*(B)$$
is an isomorphism;
\item For an $A$-module $M$, the natural map
$$H^0(B)\otimes_{H^0(A)} H^*(M) \rightarrow H^*(B\otimes_{A} M)$$
is an isomorphism;
\item Any $A$-module $N$ that is concentrated in degree 0 implies $B\otimes_A N$ is concentrated in degree 0 as well.
\end{itemize}
\end{defn}

\begin{defn}
Let $f: S'\rightarrow S$ be a morphism of derived affine schemes.  $f$ is an \'{e}tale morphism (resp. open immersion) if the following conditions hold:
\begin{enumerate}
\item $f$ is flat and in $\dAff$, the base-change $^{cl}S \times_S S'$ is classical and identifies with $^{cl}S'$;
\item The map of classical affine schemes $^{cl}S'\rightarrow ^{cl}S$ (so in $\mathrm{Aff}$ not in $\dAff$) is an \'{e}tale morphism (resp. open immersion).
\end{enumerate}
\end{defn}

We can likewise define correspondingly for affine schematic morphisms between derived prestacks.

\begin{defn}
Suppose $f:  X \rightarrow  Y$ is an affine schematic morphism between derived prestacks.  $f$ is an \'{e}tale morphism (resp. open immersion) if for every $S$ in $\dAff_{/ Y}$, the corresponding map $S \times_{ Y} X \rightarrow S$ in $\dAff$ is an \'{e}tale morphism (resp. open immersion).
\end{defn}

Now we define an flat (\'etale) topology on $\dAff$:

\begin{defn}
A morphism of derived affine schemes $f: S'\rightarrow S$ is a covering with respect to flat (\'{e}tale) topology if it is \'{e}tale and the induced map on classical affine schemes $^{cl}S'\rightarrow ^{cl}S$ is surjective.
\end{defn}

Now that we have defined what a covering is, we can define descent conditions.

\begin{defn}
A prestack $ Y$ satisfies flat (\'etale) descent if
\begin{itemize}
\item $Y(\varnothing) = \{*\}$;
\item The map $Y(S \sqcup S') \rightarrow Y(S) \times Y(S')$ is an isomorphism;
\item Given $f: S'\rightarrow S$ in $\dAff$ is an flat (\'etale) covering, the corresponding map $ Y(S)\rightarrow \mathrm{Tot}( Y (S'^\bullet/S))$, between $ Y(S)$ and the totalization of $ Y$ evaluating on the \u Cech nerve of $f$, is an isomorphism.
\end{itemize}
\end{defn}

\begin{defn}
A derived prestack $Z$ is a derived stack if it satisfies \'etale descent.
\end{defn}

\begin{lem}
\cite{tv2}\cite{gr} Derived affine schemes are derived stacks.
\end{lem}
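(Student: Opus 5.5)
To prove that a derived affine scheme $S=\Spec(A)$, viewed through its functor of points $\Hom_{\dAff}(-,S)$, is a derived stack, I will check the three conditions in the definition of étale descent directly. The first two are formal. By definition $\varnothing=\Spec(0)$ and $S\sqcup S'=\Spec(B\times B')$, so they reduce to $\mathrm{Map}(A,0)\simeq\{*\}$ and $\mathrm{Map}(A,B\times B')\simeq\mathrm{Map}(A,B)\times\mathrm{Map}(A,B')$. Both hold because $0$ is terminal and $B\times B'$ is the product in connective commutative dg algebras, computed degreewise.

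The main step is the Čech condition. Let $f:\Spec(B')\to\Spec(B)$ be an étale cover. Since mapping spaces out of $A$ send limits of algebras to limits of spaces, it suffices to show that $B$ is the totalization of the cosimplicial algebra $B'^{\otimes_B(\bullet+1)}$. Limits of connective commutative dg algebras are computed as connective covers of the limits of the underlying complexes, because the forgetful functor to $\mathrm{Mod}_k$ preserves limits after applying $\tau_{\ge0}$. So the question becomes faithfully flat descent for modules: show that the augmented cosimplicial $B$-module $B\to B'^{\otimes_B(\bullet+1)}$ is a limit diagram.

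I would prove this in two steps.

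\emph{Step 1.} After base change along $B\to B'$, the augmented cosimplicial object acquires an extra codegeneracy coming from the multiplication $B'\otimes_B B'\to B'$. It is therefore split, hence a limit diagram.

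\emph{Step 2.} Descend this back along $B\to B'$ using faithful flatness. Here the flatness definition is used: tensoring with $B'$ commutes with taking $H^*$. Surjectivity on classical schemes, together with flatness of $H^0(B)\to H^0(B')$, makes $H^0(B')$ faithfully flat over $H^0(B)$. Hence a map of $B$-modules is an equivalence if and only if it becomes one after applying $B'\otimes_B-$, checked on homotopy groups via $H^*(B'\otimes_B M)\cong H^0(B')\otimes_{H^0(B)}H^*(M)$.

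The point I expect to be the main obstacle is that $B'\otimes_B-$ must commute with the totalization, which is an infinite limit. Flatness alone does not obviously give this. I would first handle it by working one homotopy group at a time. Each $H^n$ of the cosimplicial object is a cosimplicial $H^0(B)$-module, namely the classical Amitsur complex tensored with $H^n(B)$. Its cohomology is computed by the classical faithfully flat descent theorem (Grothendieck), which vanishes in positive cosimplicial degrees and returns $H^n(B)$ in degree $0$. The Bousfield–Kan spectral sequence then degenerates, with no $\lim^1$ issues because the $E_2$ page is concentrated on a single line. This identifies $\mathrm{Tot}$ with $B$.

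Alternatively, one can simply cite \cite{tv2} and \cite[Ch.~2]{gr}, or Lurie's descent theorem for flat hypercovers, since étale covers are in particular fpqc covers.
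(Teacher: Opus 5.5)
Your argument is correct. The paper gives no proof of its own here: the lemma is simply cited from \cite{tv2} and \cite{gr}, where it follows from faithfully flat descent for modules. So your proposal is a self-contained verification of the cited fact rather than an alternative to an argument in the text.

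Within your proposal, Steps 1 and 2 are superfluous. Your final paragraph proves $B\simeq\mathrm{Tot}(B'^{\otimes_B(\bullet+1)})$ on its own. Flatness identifies $H^n$ of the \v{C}ech conerve with the Amitsur complex of $H^0(B)\to H^0(B')$ tensored with $H^n(B)$. Classical descent then concentrates $E_2$ on the line $s=0$, so all $d_r$ with $r\geq 2$ vanish and conditional convergence becomes strong convergence. This is exactly what lets you avoid commuting $B'\otimes_B-$ with $\mathrm{Tot}$.

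If you want to avoid spectral sequences, there is an equivalent route. Set $I=\mathrm{fib}(B\to B')$ and note that $\mathrm{fib}(B\to\mathrm{Tot}_n)\simeq I^{\otimes_B(n+1)}$. Faithful flatness makes $I\to B$ zero on cohomology, and $B'/B=I[1]$ is flat. Hence the tower $\{I^{\otimes_B(n+1)}\}$ has zero transition maps on $H^*$ and its limit vanishes.

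A small wording fix: limits of connective cdgas are $\tau_{\geq 0}$ of limits in all cdgas because $\tau_{\geq 0}$ is right adjoint to the inclusion. Since the totalization is $B$, the truncation is harmless anyway.

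What your route buys is transparency about the hypotheses. \'Etaleness is never used, only the flatness clause of the paper's definition plus surjectivity on classical spectra. So you get descent for flat surjective covers, which covers both readings of the paper's somewhat conflated definition of flat/\'etale coverings. The citation instead buys generality (e.g.\ hypercovers and the model-topology framework of \cite{tv2}) without the convergence bookkeeping.
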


Now we can define the notion of derived schemes, namely derived stacks that have a Zariski atlas.  Instead of defining derived schemes as derived prestacks satisfying some conditions, one can define a derived scheme just like how one would define a classical scheme as a locally ringed space, except the structure sheaf $\mathcal O_X$ would now be a connective commutative differential graded algebra over $k$, with it's truncation $H^0(\mathcal O_X)$ giving a classical scheme, and that the sheaves $H^i(\mathcal O_X)$ are quasi-coherent as $H^0(\mathcal O_X)$-modules \cite{toen2014}.

\begin{defn}
\cite{tv2}\cite{gr} A derived prestack $Z$ is a derived scheme if:
\begin{enumerate}
\item $Z$ satisfies \' etale descent;
\item The diagonal morphism $Z\rightarrow Z\times Z$ is affine schematic, and that for any derived affine scheme $T$ over $Z\times Z$, the induced morphism for classical schemes $^{cl} (Z\times_{Z\times Z} T)\rightarrow ^{cl}T$ is a closed immersion;
\item There exists a Zariski atlas, a collection of derived affine schemes $S_i$ over $Z$ such that
\begin{itemize}
\item Each $S_i\rightarrow Z$ is an open immersion;
\item For any derived affine scheme $T$ over $Z$, the induced morphisms for classical schemes $^{cl} (S_i\times_Z T)\rightarrow ^{cl}T$ covers $^{cl}T$.
\end{itemize}
\end{enumerate}
\end{defn}

\begin{defn}
A derived scheme $Z$ is quasi-compact, if $^{cl}Z$ is.
\end{defn}

\subsection{Sheaves on Derived Stacks}
Now we can define quasicoherent sheaves on a derived prestack, continuing to follow the treatment of \cite{gr}.  We first define quasicoherent sheaves on a derived affine scheme.

\begin{defn}
For a derived affine scheme $\Spec A$, we define $\QCoh(\Spec A)$ to be $A$-\emph{mod}.
\end{defn}

When we have a morphism between derived affines $f: \Spec A\rightarrow \Spec B$, this is the same as a map $B\rightarrow A$, and so we obtain the map $f_*: \QCoh(\Spec A)\rightarrow \QCoh(\Spec B)$ by restriction.  The map $f_*$ admits a left adjoint, which is denoted $f^*$.

\begin{defn}
Suppose $ Y$ is a derived prestack.  Then we define
$$\QCoh( Y) = \lim_{S\xrightarrow{u}  Y} \QCoh(S),$$
where the limit is taken over derived affine schemes over $ Y$, with the arrows given by $S\xrightarrow{f} S', \QCoh(S')\xrightarrow{f^*} \QCoh(S)$.
\end{defn}

We can think of an object in $\QCoh( Y)$ to be objects in $\QCoh(S)$ whenever $S$ is a derived affine over $ Y$ satisfying compatibilities with respect to composition of morphisms.  We can as well first build $\QCoh$ as a functor from $\mathrm{dAff^{op}}$ to presentable stable categories and perform right Kan extension along $\mathrm{dAff^{op}}\hookrightarrow \mathrm{PreStk^{op}}$.  Now $\QCoh( Y)$ will just be the value of the functor on $ Y$, and for a map $f:  X \rightarrow  Y$, the corresponding map $\QCoh( Y)\rightarrow \QCoh( X)$ under the $\QCoh$ functor will be denoted as $f^*$.  When viewed this way, the functor $\QCoh$ satisfies flat descent.

\begin{lem}
When $Z$ is a derived scheme, $\QCoh(Z)$ is glued from its open derived affines, i.e. it is sufficient to consider only open immersions $S\rightarrow Z$ in the limit of Definition 2.2.2.
\end{lem}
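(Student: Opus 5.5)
The plan is to reduce the limit defining $\QCoh(Z)$, taken over all derived affines $S \xrightarrow{u} Z$, to the limit over the full subcategory of open immersions. I would split this into two steps.

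\textbf{Step 1: Zariski descent for $\QCoh$ of affines.} The presheaf $S \mapsto \QCoh(S) = A\text{-mod}$ on $\dAff$ satisfies Zariski descent. This holds because it satisfies flat descent, as noted above, and Zariski coverings (open immersions that are jointly surjective on classical schemes) are in particular flat coverings. Concretely, for an affine $S$ covered by principal opens $S_{f_i}$, the functor
$$A\text{-mod} \rightarrow \mathrm{Tot}\big(A_{f_{i_0}\cdots f_{i_n}}\text{-mod}\big)$$
is an equivalence. I would cite \cite{gr}, \cite{tv2} rather than reprove it.

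\textbf{Step 2: cofinality.} Let $\mathcal U$ be the category of open derived affines $U \hookrightarrow Z$, and let $\dAff_{/Z}$ be the category of all derived affines over $Z$. Both are regarded as sites over which $\QCoh$ is a sheaf.

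Because $Z$ has a Zariski atlas $\{S_i\}$ and an affine-schematic diagonal, every $T \to Z$ with $T$ affine admits a Zariski cover $\{S_i \times_Z T\}$. Each $S_i \times_Z T$ is an open subscheme of $T$, and since the diagonal is affine these are affine whenever $T$ is. So each $T$ factors Zariski-locally through objects of $\mathcal U$.

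Now take the right Kan extension of $\QCoh|_{\mathcal U}$ to $\dAff_{/Z}$ and evaluate it at $T$. The result is the limit of $\QCoh(U)$ over pairs $(U \in \mathcal U,\ T \to U)$. Using Zariski descent on $T$ from Step 1, this limit agrees with $\QCoh(T)$. Concretely, I would refine by covers by affine opens $U_\alpha$, with $T$ covered by $T \times_Z U_\alpha$, and then use descent on $T$. The standard comparison lemma, that restriction along a dense subsite induces an equivalence of sheaf categories and hence preserves limits of sheaves, then gives
$$\QCoh(Z) = \lim_{\dAff_{/Z}} \QCoh \simeq \lim_{\mathcal U} \QCoh.$$

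\textbf{Main obstacle.} The main obstacle is making the comparison lemma precise in the $\infty$-categorical setting. In particular, one has to check that the intersections $U \cap V$ of open affines, which are needed for the \v Cech nerve, lie in $\mathcal U$ or are covered by it. Here I would use that the diagonal is affine schematic, so $U \times_Z V$ is affine and open in $U$.

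If that route gets awkward, I would instead argue directly. First write $Z$ as the colimit of its open affines in Zariski stacks, which uses the atlas together with the fact that $Z$ is a sheaf. Then use that $\QCoh$, as a right Kan extended functor satisfying descent, sends colimits of stacks to limits.
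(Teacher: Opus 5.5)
Step 1 is fine. But the main route of Step 2 fails as written, and it contains a genuine gap. (The paper gives no proof of this lemma; it quotes it as standard background from Gaitsgory--Rozenblyum, so your argument has to stand on its own.)

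\textbf{The Kan extension and comparison-lemma step fails.} The right Kan extension of $\QCoh|_{\mathcal U}$ along $\mathcal U^{\mathrm{op}}\hookrightarrow(\dAff_{/Z})^{\mathrm{op}}$, evaluated at $T$, is a limit over open affines $U$ equipped with a map $U\to T$ \emph{over $Z$}, not $T\to U$. For a typical $T$ there are no such $U$. If $T\to Z$ is a closed point that is not open, the only such $U$ is $\emptyset$, so the extension gives $0$, not $\QCoh(T)$.

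For the same reason the comparison lemma does not apply: $\mathcal U$ is not a dense subsite of $\dAff_{/Z}$. The members $T\times_Z U_\alpha$ of the cover you write down are open in $T$, not in $Z$, so they are not objects of $\mathcal U$.

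The limit you actually wrote, over pairs $(U,T\to U)$, does not recover $\QCoh(T)$ either. If $Z$ is affine, then $Z$ is the largest element of that poset and the limit is $\QCoh(Z)$.

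\textbf{How to repair it.} Separate the two jobs you have merged. Let $\mathcal V\subseteq\dAff_{/Z}$ be the full subcategory of those $T\to Z$ that factor through some $U\in\mathcal U$.

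First, for any $T\in\dAff_{/Z}$, the maps $V\to T$ with $V\in\mathcal V$ form a sieve. It contains the cover $\{T\times_Z S_i\}$, which is affine by the affine diagonal. So by Step 1, $\QCoh|_{\dAff_{/Z}}$ is right Kan extended from $\mathcal V$, and $\lim_{\dAff_{/Z}}\QCoh\simeq\lim_{\mathcal V}\QCoh$.

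Second, $\mathcal U\hookrightarrow\mathcal V$ is cofinal for these limits. For each $T\in\mathcal V$, your poset $\mathcal U_T=\{U\in\mathcal U: T\to Z\text{ factors through }U\}$ is nonempty. It is closed under $U\times_Z V$, which is affine and open by the affine diagonal, so it is weakly contractible. Thus the pairs $(U,T\to U)$ enter the argument through contractibility, not through descent.

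\textbf{Your fallback route.} It is correct and is the standard proof, but it needs exactly the same input. Open immersions are monomorphisms, so the fiber of $\mathrm{colim}_{\mathcal U}U\to Z$ over $s\in Z(S)$ is the nerve of $\mathcal U_s$. That nerve is contractible whenever $\mathcal U_s$ is nonempty, and $\mathcal U_s$ is nonempty Zariski-locally on $S$ by the atlas. Hence $\mathrm{colim}_{\mathcal U}U\to Z$ is a Zariski-local equivalence.

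The atlas and the sheaf property of $Z$ alone do not give this. It is the intersection property coming from the affine diagonal that makes the colimit presentation of $Z$ hold. You flagged this under ``main obstacle,'' but it needs to be proved, not left as a remark.
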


Given $Y_1, Y_2$ prestacks, we have
\begin{align*}
\QCoh(Y_1\times Y_2) &= \lim_{S\rightarrow Y_1\times Y_2} \QCoh(S) \\
&= \lim_{S_1\rightarrow Y_1, S_2\rightarrow Y_2} \QCoh(S_1\times S_2) \\ 
&= \lim_{S_1\rightarrow Y_1, S_2\rightarrow Y_2} \QCoh(S_1)\otimes \QCoh(S_2),
\end{align*}
where the middle equality comes from the fact that $(S_1\rightarrow Y_1) \times (S_2\rightarrow Y_2)\rightarrow S_1\times S_2 \rightarrow Y_1\times Y_2$ is cofinal.  So we have a canonical map
\begin{align*}
\QCoh(Y_1)\otimes \QCoh(Y_2) &= (\lim_{S_1\rightarrow Y_1}\QCoh(S_1)) \otimes (\lim_{S_2\rightarrow Y_2}\QCoh(S_2)) \\
&\rightarrow \lim_{S_1\rightarrow Y_1, S_2\rightarrow Y_2} \QCoh(S_1)\otimes \QCoh(S_2) \\
&=\QCoh(Y_1\times Y_2)
\end{align*}
which we denote by $\mathcal E, \mathcal F \mapsto \mathcal E \boxtimes \mathcal F$.  The symmetric monoidal product on $\QCoh(Y)$ is then given by
$$\mathcal E \otimes \mathcal F = \Delta^*(\mathcal E \boxtimes \mathcal F),$$
where $\Delta$ is the diagonal map.  Given a morphism $f: X\rightarrow Y$, the functor $f^*$ is then naturally symmetric monoidal.

We now define a full subcategory of perfect complexes.  

\begin{defn}
For $ Y$ a derived prestack, $\Perf( Y)$ is the full subcategory consisting of dualizable objects.
\end{defn}

\begin{lem}
An object in $\QCoh( Y)$ is perfect if and only if for every derived affine $S$ over $ Y$, the corresponding object in $\QCoh(S)$ is perfect.
\end{lem}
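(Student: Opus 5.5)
We prove both implications directly from the definition of $\QCoh(Y)$ as the limit $\lim_{S\to Y}\QCoh(S)$ over derived affines $u: S\to Y$. The transition functors $f^*$ are symmetric monoidal, so the limit carries the pointwise symmetric monoidal structure, and each projection $u^*:\QCoh(Y)\to\QCoh(S)$ is symmetric monoidal. On a derived affine $S=\Spec A$, "perfect" means a dualizable object of $A$-mod. So the statement reduces to this: an object of a limit of symmetric monoidal categories along symmetric monoidal functors is dualizable iff each of its components is dualizable.

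For the forward direction, suppose $\mathcal E\in\QCoh(Y)$ is dualizable, with dual $\mathcal E^\vee$, evaluation and coevaluation satisfying the triangle identities. Any symmetric monoidal functor preserves dualizability, since it carries the duality data and the triangle identities to duality data. Hence $u^*\mathcal E$ is dualizable in $\QCoh(S)$, i.e. perfect, for every $S\xrightarrow{u}Y$.

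For the converse, suppose each $u^*\mathcal E$ is dualizable.
\begin{enumerate}
\item The dual is unique up to contractible choice: the space of duality data for a dualizable object is contractible.
\item A symmetric monoidal functor $f^*:\QCoh(S')\to\QCoh(S)$ sends duality data for $\mathcal E_{S'}$ to duality data for $f^*\mathcal E_{S'}\simeq\mathcal E_S$. So the duals $(u^*\mathcal E)^\vee$, together with their evaluation and coevaluation maps, are compatible with the transition equivalences.
\item Concretely, we use the internal Hom. On affines $(u^*\mathcal E)^\vee\simeq\underline{\Hom}(u^*\mathcal E,\mathcal O_S)$, and the canonical map $f^*\underline{\Hom}(\mathcal E_{S'},\mathcal O)\to\underline{\Hom}(f^*\mathcal E_{S'},\mathcal O)$ is an equivalence precisely because $\mathcal E_{S'}$ is perfect. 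The assignment $S\mapsto(u^*\mathcal E)^\vee$ therefore defines a compatible family, i.e. an object $\mathcal E^\vee\in\QCoh(Y)$.
\item The evaluation and coevaluation maps are compatible families of morphisms. Mapping spaces in a limit are limits of mapping spaces, so they assemble into morphisms $\mathcal E\otimes\mathcal E^\vee\to\mathcal O_Y$ and $\mathcal O_Y\to\mathcal E^\vee\otimes\mathcal E$.
\item The triangle identities hold in $\QCoh(Y)$ because they hold after each $u^*$, and equivalences and homotopies in a limit are detected componentwise.
\end{enumerate}

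The main obstacle is making the gluing in steps (3)–(5) homotopy-coherent: we must produce the whole duality datum as a point of the limit, not just objectwise. The cleanest route is the general fact that, for a limit of symmetric monoidal $\infty$-categories, the full subcategory of dualizable objects is the limit of the dualizable subcategories. To prove it, model duality data as symmetric monoidal functors out of the free symmetric monoidal category on a dualizable object. The space of such data over $\mathcal E$ is then the limit of the corresponding spaces over the $u^*\mathcal E$. Each of these spaces is contractible, hence nonempty, so the limit is contractible and in particular nonempty, which shows $\mathcal E$ is dualizable.
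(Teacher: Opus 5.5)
The paper gives no proof of this lemma. It is stated without argument as part of the background summary that follows Gaitsgory--Rozenblyum, so there is no route of the paper's to compare with. Your proposal supplies a correct proof.

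The forward direction is fine. Your last paragraph is a complete proof of the converse:
\begin{itemize}
\item Limits of symmetric monoidal $\infty$-categories are computed on underlying $\infty$-categories, so $\QCoh(Y)$ is the limit in that setting.
\item Maps out of a corepresenting object commute with the limit, and so do fibers over a point.
\item Hence the space of duality data on $\mathcal E$ is a limit of contractible spaces, and in particular it is nonempty.
\end{itemize}

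Two remarks.

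First, you are using a real theorem here. That a free symmetric monoidal $\infty$-category on a dualizable object exists, with evaluation at the generator an inclusion of components onto the dualizable objects, is the one-dimensional cobordism hypothesis (Lurie, Harpaz). You should say so explicitly.

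Second, step (5) is wrong as literally stated. In a limit of $\infty$-categories equivalences are detected componentwise, but homotopies are not, since $\pi_0$ does not commute with limits. A family of componentwise triangle homotopies need not assemble to a homotopy in $\QCoh(Y)$. You flagged this yourself, and your final paragraph bypasses it.

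If you prefer to avoid the cobordism hypothesis, you can repair steps (3)--(5) directly:
\begin{itemize}
\item Your observation in (3), applied with an arbitrary second argument rather than just $\mathcal O$, is a Beck--Chevalley condition. It shows that $\underline{\Hom}(\mathcal E,-)$ on $\QCoh(Y)$ is computed componentwise.
\item In a closed symmetric monoidal category, $\mathcal E$ is dualizable if and only if the canonical map $\underline{\Hom}(\mathcal E,\mathcal O)\otimes\mathcal E\to\underline{\Hom}(\mathcal E,\mathcal E)$ is an isomorphism, which can be checked in the homotopy category.
\item Under each $u^*$ this map becomes the corresponding map for $u^*\mathcal E$, which is an equivalence. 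So it is an equivalence in $\QCoh(Y)$.
\end{itemize}
This trades the triangle identities, which are not local, for a single ``is an equivalence'' condition, which is.
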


For a map $f$ between derived prestacks, the functor $f^*$ sends perfect objects to perfect objects.  So we obtain a functor from $\mathrm{PreStk^{op}}$ to $\ooCat$.  This functor is isomorphic to the right Kan extension of its restriction to $\mathrm{dAff^{op}}$.  This functor, just like before, satisfies flat descent.

We will now go over a broad class of spaces called perfect stacks \cite{bzfn2010} of which quasi-compact derived schemes with affine diagonal and quotients of quasi-projective derived scheme by a linear action of an affine group (in characteristic zero) are examples of.  We can also construct perfect stacks by taking fiber products of perfect stacks and forming mapping stacks between a finite simplicial set and a perfect stack \cite{bzfn2010}.  Roughly speaking, perfect stacks are spaces where one can go back and forth between small and large categories of sheaves, namely we can obtain the category of quasicoherent sheaves from the category of perfect sheaves using the $\mathrm{Ind}$-construction, and we can recover the category of perfect sheaves from the category of quasicoherent sheaves by taking compact objects \cite{bzfn2010}, i.e. for a perfect stack $X$:
$$\QCoh(X)\simeq \mathrm{Ind}\Perf(X), \,\,\,\,\,\,\, \Perf(X) \simeq \QCoh(X)^\omega.$$
We will now give the definition of perfect stacks.

\begin{defn}
\sloppy
\cite{bzfn2010} A derived stack $X$ is perfect if it has affine diagonal and that $\QCoh(X) \simeq \mathrm{Ind}\Perf(X)$.  A morphism $X\rightarrow Y$ is perfect if for derived affines $U$ over $Y$, $X \times_Y U$ is perfect.
\end{defn}

\begin{lem}
\cite{bzfn2010} On a perfect stack $X$, compact objects of $\QCoh(X)$ are the same thing as perfect complexes.
\end{lem}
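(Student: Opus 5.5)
The plan is to prove the two inclusions between the full subcategory $\QCoh(X)^\omega$ of compact objects and $\Perf(X)$ separately, using the hypothesis $\QCoh(X)\simeq \mathrm{Ind}\Perf(X)$ together with the fact that perfect objects, being dualizable, are compact.

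First I would show that every perfect complex is compact. By definition $\mathcal P\in\Perf(X)$ is dualizable in $\QCoh(X)$, so there is a dual $\mathcal P^\vee$ with
$$\Hom_{\QCoh(X)}(\mathcal P,-)\simeq \Hom_{\QCoh(X)}(\mathcal O_X,\mathcal P^\vee\otimes -).$$
The tensor product commutes with colimits in each variable, so compactness of $\mathcal P$ reduces to compactness of the unit $\mathcal O_X$, i.e.\ to the global sections functor $\Gamma(X,-)$ commuting with filtered colimits (equivalently all colimits, by stability). This is the step where the perfect stack hypothesis is needed. I would argue it via the equivalence $\QCoh(X)\simeq\mathrm{Ind}\Perf(X)$. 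The unit $\mathcal O_X$ lies in $\Perf(X)$, and under the Ind-equivalence objects of $\Perf(X)$ correspond to representables in $\mathrm{Ind}\Perf(X)$. Representables are compact in an Ind-category, because $\Hom(j(P),\mathrm{colim}\,Q_i)\simeq\mathrm{colim}\,\Hom(P,Q_i)$ for filtered colimits. The only point to check is that the equivalence in the definition is the canonical functor $\mathrm{Ind}\Perf(X)\to\QCoh(X)$ extending the inclusion, which is how I would read the definition following \cite{bzfn2010}. With that reading, every object of $\Perf(X)$ is compact in $\QCoh(X)$, and the dualizability remark is not even needed, though it gives a useful sanity check.

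Second, the converse: every compact object is perfect. Let $\mathcal F\in\QCoh(X)^\omega$. Write $\mathcal F\simeq\mathrm{colim}_i\, P_i$ as a filtered colimit of perfect objects, which is possible since $\QCoh(X)\simeq\mathrm{Ind}\Perf(X)$. Compactness gives
$$\Hom(\mathcal F,\mathcal F)\simeq\mathrm{colim}_i\Hom(\mathcal F,P_i),$$
so the identity of $\mathcal F$ factors through some $P_i$. Hence $\mathcal F$ is a retract of the perfect object $P_i$. Retracts of dualizable objects are dualizable: the dual is the retract of $P_i^\vee$ cut out by the transposed idempotent. So $\mathcal F\in\Perf(X)$. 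Equivalently, one can cite the standard fact that the compact objects of $\mathrm{Ind}(\mathcal C)$ are the idempotent completion of $\mathcal C$, and note that $\Perf(X)$ is idempotent complete since dualizability is closed under retracts.

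The main obstacle is the first step, namely ensuring that the abstract equivalence $\QCoh(X)\simeq\mathrm{Ind}\Perf(X)$ is induced by the inclusion, so that perfect objects really are compact. If only an abstract equivalence is given, I would instead prove directly that $\Gamma(X,-)$ preserves colimits, by checking that the colimit of the images $\mathrm{colim}_i\, j(P_i)$ is computed by the inclusion-induced functor. This is the content of \cite{bzfn2010}, and I would cite it there.
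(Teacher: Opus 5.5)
Your proof is correct, and the point you flag as the main obstacle is not one: the definition in \cite{bzfn2010} asks that the canonical functor $\mathrm{Ind}\Perf(X)\to\QCoh(X)$ extending the inclusion be an equivalence. So perfect objects are compact as images of representables. Conversely, a compact object is a retract of some perfect $P_i$, hence dualizable, since $\QCoh(X)$ is idempotent complete and closed symmetric monoidal. The paper itself gives no argument and simply cites \cite{bzfn2010}, where the statement rests on exactly this reasoning: compact objects of an Ind-category form the idempotent completion of the original category, and $\Perf(X)$ is already idempotent complete.
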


\subsection{Sheaves of Categories}
Throughout this paper, we view sheaves of categories from the point of view of modules over $\QCoh(X)$ and $\Perf(X)$.  Now we have to specify what environment we are working in, i.e. where the module objects live in.  We will work mainly with the category of small stable idempotent complete categories $\mathrm{Cat}_\infty^{\mathrm{perf}}$ \cite{Blumberg_2013}.  The category $\mathrm{Cat}_\infty^{\mathrm{perf}}$ is equivalent to the category of stable compactly generated categories $\mathcal Pr_{st, \omega}^L$, with colimit preserving and compact obejct preserving functors.  This equivalence is given by the $\mathrm{Ind}$ construction \cite[\nopp 5.5.7.10]{L}.  The category of presentable categories $\mathcal Pr^L$ with colimit preserving functors has a natural closed symmetric monoidal structure \cite[\nopp 4.8]{HA}, where functors
$$\mathcal C\otimes \mathcal D \rightarrow \mathcal E$$
is equivalent to functors
$$\mathcal C\times \mathcal D\rightarrow \mathcal E$$
which are colimit preserving at both variables, and this monoidal product restricts to the subcategory $\mathcal Pr_{st,\omega}^L$ and the full subcategory of stable presentable categories $\mathcal Pr_{st}^L$.  This means that we can define a closed monoidal product on $\mathrm{Cat}_\infty^{\mathrm{perf}}$ \cite{bzfn2010} by
$$\mathcal C\otimes \mathcal D = (\mathrm{Ind}(\mathcal C)\otimes \mathrm{Ind}(\mathcal D))^\omega.$$
$\QCoh(X)$ is an algebra object in the category of stable presentable categories $\mathcal Pr_{st}^L$, with colimit preserving functors and $\Perf(X)$ is an algebra object in the category of small stable idempotent complete categories $\mathrm{Cat}_\infty^{\mathrm{perf}}$.  Given an algebra object $A$ in a monoidal category $\mathcal C$, the module objects $M$ in $\mathcal C$ over $A$ are the ones with unital and associative maps $A\otimes M\rightarrow M$ up to coherent homotopy \cite{lurie2007derivedalgebraicgeometryii}.  We can then define tensor product $\cdot \otimes_A \cdot$ between modules over $A$ by the bar construction \cite[\nopp 4.4]{HA}.  So when we talk about $\QCoh(X)$ modules, we mean module objects in $\mathcal Pr_{st}^L$ over the algebra object $\QCoh(X)$ and by $\Perf(X)$ modules, we mean module objects in $\mathrm{Cat}_\infty^{\mathrm{perf}}$ over the algebra object $\Perf(X)$.  

Now we will go through two useful theorems regarding integral transforms of sheaves that will be used in calculations later.  The first theorem more or less states that the category of quasicoherent kernels is equivalent to colimit preserving functors between two categories of quasicoherent sheaves as $\QCoh(S)$ modules, and that the category of kernels is equivalent to the tensor product of the two $\QCoh(S)$ modules \cite{bzfn2010}.  The second theorem gives an equivalence between the category of coherent kernels and exact functors between the category of perfect sheaves and the category of coherent sheaves as $\Perf(S)$ modules \cite{bznp2017}.

\begin{thm}
\cite{bzfn2010}
\begin{enumerate}
\item For $X\rightarrow S \leftarrow Y$ morphisms of perfect stacks, there is a canonical equivalence
$$\QCoh(X\times_S Y) \simeq \QCoh(X)\otimes_{\QCoh(S)}\QCoh(Y).$$
\item For $X\rightarrow S$ a perfect morphism to a derived stack $S$ with affine diagonal, i.e. the fibers over affines is perfect, and $Y\rightarrow S$, there is a canonical equivalence
$$\Phi: \QCoh(X\times_S Y) \xrightarrow{\ \ \sim\ \ } \mathrm{Fun}_{\QCoh(S)}^L(\QCoh(X),\QCoh(Y))$$
where the right hand side are the colimit preserving $\QCoh(S)$-linear functors between $\QCoh(X)$ and $\QCoh(Y)$, given by
$$\Phi(K) = p_{Y*}(p_X^*(-)\otimes K).$$
Now suppose that $X,Y,Z$ over $S$ all perfect stacks, we have composition of functors
$$\mathrm{Fun}_{\QCoh(S)}^L(\QCoh(X),\QCoh(Y))\otimes \mathrm{Fun}_{\QCoh(S)}^L(\QCoh(Y),\QCoh(Z))$$
$$\rightarrow \mathrm{Fun}_{\QCoh(S)}^L(\QCoh(X),\QCoh(Z))$$
which corresponds to the convolution map
$$\QCoh(X\times_S Y)\otimes \QCoh(Y\times_S Z)\rightarrow \QCoh(X\times_S Z)$$
given by
$M\star N = p_{13*}(p_{12}^*(M)\otimes p_{13}^*(N))$ where the maps $p_{12},p_{23},p_{13}$ comes from the diagram
\begin{center}
\begin{tikzcd}
            & X\times_S Y\times_S Z \arrow[ld, "p_{12}"'] \arrow[d, "p_{13}"'] \arrow[rd, "p_{23}"] &             \\
X\times_S Y & X\times_S Z                                                                           & Y\times_S Z.
\end{tikzcd}
\end{center}

\end{enumerate}
\end{thm}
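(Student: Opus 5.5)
The plan is to follow the strategy of Ben-Zvi--Francis--Nadler and reduce everything to the affine case using descent and the perfectness hypotheses. The point is that on a perfect stack $\QCoh$ is compactly generated by perfect, i.e. dualizable, objects. Hence $\QCoh(X)$ is a dualizable, in fact self-dual, object of $\mathcal Pr_{st}^L$. It is also dualizable as a $\QCoh(S)$-module.

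\textbf{Part (1).} There is a canonical comparison functor
$$\QCoh(X)\otimes_{\QCoh(S)}\QCoh(Y)\to\QCoh(X\times_S Y),$$
induced by $\mathcal E\boxtimes\mathcal F\mapsto p_X^*\mathcal E\otimes p_Y^*\mathcal F$. It is built from the $\QCoh(S)$-balanced exterior product described before the statement.

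First, treat the absolute case $S=\pt$. Since both sides are compactly generated, it suffices to show two things. The first is that the exterior products $\mathcal E\boxtimes\mathcal F$ of perfect objects are compact and generate $\QCoh(X\times Y)$. The second is that
$$\Hom(\mathcal E\boxtimes\mathcal F,\mathcal E'\boxtimes\mathcal F')\simeq \Hom(\mathcal E,\mathcal E')\otimes\Hom(\mathcal F,\mathcal F').$$
The Hom identity is computed by dualizability together with the K\"unneth/base-change formula $\Gamma(X\times Y,-)$ for quasi-coherent pushforward along affine-diagonal quasi-compact maps. Generation I would prove by choosing an affine cover $U\to X$. Affine diagonal makes the \v{C}ech nerve affine, and descent writes $\QCoh(X\times Y)$ as a totalization of the categories $\QCoh(U_n\times Y)$. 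Perfectness of $X$ then lets one commute this limit with $-\otimes\QCoh(Y)$, because tensoring with a dualizable object preserves limits.

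Next, the relative case follows by the bar construction. Write
$$\QCoh(X)\otimes_{\QCoh(S)}\QCoh(Y)=|\QCoh(X)\otimes\QCoh(S)^{\otimes\bullet}\otimes\QCoh(Y)|.$$
By the absolute case, this is the geometric realization of $\QCoh(X\times S^{\bullet}\times Y)$ along pullback functors. Passing to right adjoints in $\mathcal Pr^L$, it becomes a totalization under pushforwards. This totalization is identified with $\QCoh(X\times_S Y)$ via descent along the \v{C}ech-type simplicial object $X\times S^\bullet\times Y$. That object is the bar resolution of $X\times_S Y\to X\times Y$, and affine diagonal of $S$ makes it a well-behaved affine morphism. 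Monadicity (Barr--Beck--Lurie) for the affine map $X\times_S Y\to X\times Y$ finishes this step.

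\textbf{Part (2).} Since $\QCoh(X)$ is self-dual over $\QCoh(S)$, with duality data given by the diagonal $\Delta_*\mathcal O_X$ and by $p_*\Delta^*$, we obtain
$$\mathrm{Fun}^L_{\QCoh(S)}(\QCoh(X),\QCoh(Y))\simeq\QCoh(X)^\vee\otimes_{\QCoh(S)}\QCoh(Y)\simeq\QCoh(X\times_S Y).$$
The last equivalence is part (1). Here perfectness of $X\to S$ is what gives dualizability of $\QCoh(X)$ over $\QCoh(S)$, checked on affines over $S$ and glued. Unwinding the evaluation pairing shows that the resulting functor is $\Phi(K)=p_{Y*}(p_X^*(-)\otimes K)$. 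Compatibility with composition then follows from projection and base-change formulas on $X\times_S Y\times_S Z$: composing two integral transforms gives the transform with kernel $p_{13*}(p_{12}^*M\otimes p_{23}^*N)$.

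\textbf{Main obstacle.} I expect the hardest step to be the descent/commutation of the limit with the tensor product in part (1). This is exactly where the hypotheses of affine diagonal and $\QCoh=\mathrm{Ind}\Perf$ are used, to ensure that compact generators on $X\times_S Y$ come from exterior products.
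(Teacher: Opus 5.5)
The paper gives no proof of this statement; it cites Ben-Zvi--Francis--Nadler. Your part (1) is essentially the argument of that source: K\"{u}nneth for products, then the bar construction, passage to right adjoints, and the Beck--Chevalley form of Barr--Beck for the affine map $X\times_S Y\to X\times Y$.

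Two small repairs in part (1):
\begin{enumerate}
\item The paper's definition of a perfect stack does not supply a flat affine atlas. Replace the \v{C}ech nerve by the tautological presentation $\QCoh(X)\simeq\lim_{U\to X}\QCoh(U)$ over all affines, together with $X\times Y\simeq\mathrm{colim}_U\,(U\times Y)$.
\item Commuting that limit with $-\otimes\QCoh(Y)$ uses dualizability of $\QCoh(Y)$, not perfectness of $X$.
\end{enumerate}
Done this way, the limit argument gives $\QCoh(X)\otimes\QCoh(Y)\simeq\QCoh(X\times Y)$ outright whenever one factor is compactly generated. That is exactly what identifies the terms of your bar construction with $\QCoh(X\times S^n\times Y)$, and it makes the separate Hom computation unnecessary.

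\textbf{The genuine gap is in part (2).} There $S$ is only assumed to have affine diagonal and $Y\to S$ is arbitrary. So your step ``the last equivalence is part (1)'' is unavailable, because your proof of (1) needs $\QCoh(S)$ and $\QCoh(Y)$ to be compactly generated (K\"{u}nneth for $\QCoh(S)^{\otimes n}$ and the bar construction).

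The same problem undermines dualizability of $\QCoh(X)$ over $\QCoh(S)$. The coevaluation $\Delta_*\mathcal O_X$ must be an object of $\QCoh(X)\otimes_{\QCoh(S)}\QCoh(X)$, and identifying that with $\QCoh(X\times_S X)$ is again (1) over a non-perfect base. ``Checked on affines over $S$ and glued'' is not a valid substitute: gluing duality data for $\QCoh(S)$-modules needs something like $1$-affineness of $S$, which is not assumed. As written, you have proved (2) only when $S$ and $Y$ are perfect.

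\textbf{The repair} is to localize in $Y$ rather than in $S$, and never use duality over $\QCoh(S)$.
\begin{enumerate}
\item Both sides of $\Phi$ turn colimits in $Y$ into limits. On one side, $\QCoh(Y)\simeq\lim_{U\to Y}\QCoh(U)$ in $\mathrm{Mod}_{\QCoh(S)}(\mathcal{P}r_{st}^L)$ and $\mathrm{Fun}^L_{\QCoh(S)}(\QCoh(X),-)$ preserves limits. On the other, $\QCoh(X\times_S Y)\simeq\lim_U\QCoh(X\times_S U)$. So you may take $Y=U=\Spec B$.
\item Since $S$ has affine diagonal, $\pi\colon U\to S$ is affine and $\QCoh(U)\simeq\mathrm{Mod}_{\pi_*\mathcal O_U}(\QCoh(S))$. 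Lurie's tensor formula for module categories then gives $\QCoh(U)\otimes_{\QCoh(S)}\QCoh(X)\simeq\QCoh(X_U)$ with $X_U=X\times_S U$, with no dualizability needed.
\item Extension of scalars yields $\mathrm{Fun}^L_{\QCoh(S)}(\QCoh(X),\QCoh(U))\simeq\mathrm{Fun}^L_{\QCoh(U)}(\QCoh(X_U),\QCoh(U))$.
\item Now $X_U$ is perfect by hypothesis, so $\QCoh(X_U)=\mathrm{Ind}\Perf(X_U)$ is self-dual over $B$ via $E\mapsto E^\vee$ on compact objects. This identifies the right-hand side with $\QCoh(X_U)$ through $K\mapsto p_*(-\otimes K)$, and base change shows this is $\Phi$.
\end{enumerate}

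Finally, your composition kernel $p_{13*}(p_{12}^*M\otimes p_{23}^*N)$ is the correct one; the $p_{13}^*(N)$ in the displayed statement is a typo.
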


\begin{rmk}
Note that for Theorem 2.20 (2), if $X$ and $S$ are both perfect stacks, then $X\rightarrow S$ is immediately a perfect morphism since fiber products of perfect stacks are perfect \cite{bzfn2010}.
\end{rmk}

\begin{thm}
\cite{bznp2017} 
\begin{enumerate}
\item For $X\rightarrow S \leftarrow Y$ morphisms of perfect stacks, the equivalence in Theorem 2.20 restricts to an equivalence on compact objects
$$\Perf(X\times_S Y)\simeq \Perf(X)\otimes_{\Perf(S)}\Perf(Y).$$
\item Suppose $S$ is a perfect stack, $p_X: X\rightarrow S$ a proper relative algebraic space, $Y$ a locally Noetherian $S$-stack.  The integral transform construction provides an equivalence
$$\Phi: \Coh(X\times_S Y)\xrightarrow{\ \ \sim\ \ }\mathrm{Fun}_{\Perf(S)}^{ex}(\Perf(X), \Coh(Y)),$$
where $\Coh(X) \subset \QCoh(X)$ is the full subcategory with objects of bounded and coherent cohomologies.
\end{enumerate}
\end{thm}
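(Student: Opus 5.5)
The plan is to deduce both statements from Theorem 2.20 by passing between small and large categories with the $\mathrm{Ind}$ construction. We then identify the correct subcategories: compact objects in part (1), and coherent kernels in part (2).

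For (1), recall that $\mathrm{Ind}\colon \mathrm{Cat}_\infty^{\mathrm{perf}}\to \mathcal Pr^L_{st,\omega}$ is a symmetric monoidal equivalence. It therefore carries algebras and modules to algebras and modules. It also commutes with relative tensor products, because these are computed by the geometric realization of the bar construction, and colimits in $\mathcal Pr^L_{st,\omega}$ agree with those in $\mathcal Pr^L_{st}$. Consequently
$$\mathrm{Ind}\big(\Perf(X)\otimes_{\Perf(S)}\Perf(Y)\big)\simeq \QCoh(X)\otimes_{\QCoh(S)}\QCoh(Y),$$
using $\QCoh\simeq\mathrm{Ind}\Perf$ on perfect stacks. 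By Theorem 2.20(1) the right-hand side is $\QCoh(X\times_S Y)$. Since $X\times_S Y$ is again perfect, this equals $\mathrm{Ind}\Perf(X\times_S Y)$.

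Taking compact objects on both sides gives the equivalence, because both small categories are idempotent complete. It remains to check that the resulting functor is the expected one. On generators it sends $E\otimes F$ to $p_X^*E\otimes p_Y^*F$, which is perfect, so the identification of compact objects is compatible with the natural functor.

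For (2), note that an exact $\Perf(S)$-linear functor $\Perf(X)\to\Coh(Y)\subset\QCoh(Y)$ extends uniquely to a colimit-preserving $\QCoh(S)$-linear functor $\QCoh(X)\to\QCoh(Y)$. Hence $\mathrm{Fun}^{ex}_{\Perf(S)}(\Perf(X),\Coh(Y))$ embeds fully faithfully into $\mathrm{Fun}^L_{\QCoh(S)}(\QCoh(X),\QCoh(Y))$. By Theorem 2.20(2), the latter is $\QCoh(X\times_S Y)$ via $K\mapsto\Phi_K$. It therefore suffices to show that $\Phi_K$ sends $\Perf(X)$ into $\Coh(Y)$ if and only if $K\in\Coh(X\times_S Y)$.

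The forward direction is straightforward. Properness is stable under base change, so $p_Y\colon X\times_S Y\to Y$ is proper. Moreover $p_X^*P\otimes K$ is coherent for $P$ perfect and $K$ coherent. Proper pushforward preserves $\Coh$ on locally Noetherian stacks, so $p_{Y*}(p_X^*P\otimes K)\in\Coh(Y)$.

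The converse is the main obstacle. Suppose $p_{Y*}(p_X^*P\otimes K)$ is coherent for every perfect $P$ on $X$; we must show that $K$ is coherent. Coherence is local on $Y$, and the base change formula holds for $p_Y$ along flat maps, so we first reduce to $Y$ affine and Noetherian. Then $p_{Y*}(p_X^*P\otimes K)=\mathrm{RHom}_{X\times_S Y}(p_X^*P^\vee,K)$ as a module over $\mathcal O(Y)$. The objects $p_X^*P^\vee$ compactly generate $\QCoh(X\times_S Y)$, since $\Perf(X)$ generates $\QCoh(X)$ and $Y$ is affine over $S$ (using part (1)).

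I would then invoke a Neeman/Lurie-type criterion for a stack proper over a Noetherian affine base. Such a criterion states that an object is bounded coherent exactly when its $\mathrm{RHom}$ from a set of compact generators lands in bounded, finitely generated $\mathcal O(Y)$-modules. Boundedness is the delicate point. I would control it by choosing a single compact generator $G=p_X^*G_X$ and using that $\mathrm{RHom}(G,-)$ detects the amplitude of $K$ up to a uniform shift, which depends only on $G$.
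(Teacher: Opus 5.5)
Your argument for (1) is fine. So are the embedding $\mathrm{Fun}^{ex}_{\Perf(S)}(\Perf(X),\Coh(Y))\hookrightarrow \mathrm{Fun}^L_{\QCoh(S)}(\QCoh(X),\QCoh(Y))\simeq\QCoh(X\times_S Y)$, the forward direction of (2), and the reduction of the converse to $Y=\Spec R$ affine with $p_X^*\Perf(X)$ generating $\Perf(X\times_S Y)$. The paper gives no proof here and simply cites Ben-Zvi--Nadler--Preygel.

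The genuine gap is the converse of (2), which is the entire content of the theorem. After your reductions you must show the following: if $Z=X\times_S Y$ is proper over a Noetherian $\Spec R$ and $K\in\QCoh(Z)$ satisfies $\mathrm{RHom}(Q,K)\in\Coh(R)$ for all $Q\in\Perf(Z)$, then $K\in\Coh(Z)$. Up to replacing $Q$ by $Q^\vee$, this is literally statement (2) in the special case $S=Y=\Spec R$, with $Z$ in place of $X$. So appealing to an unspecified ``Neeman/Lurie-type criterion'' assumes what is to be proved rather than proving it.

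The sketch you give for that criterion does not fill the hole.

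First, the assertion that a single compact generator $G$ detects the cohomological amplitude of $K$ up to a uniform shift is not a formal property of compact generators. $\mathrm{RHom}(G,-)$ identifies $\QCoh(Z)$ with modules over $\mathrm{End}(G)$, which is in general not connective. Hence there is no $t$-exactness relating bounds on $\mathrm{RHom}(G,K)$ to bounds on $K$.
\begin{itemize}
\item The lower bound holds, but it is a genuine theorem. For schemes it is Neeman's result that the standard $t$-structure lies in the preferred equivalence class determined by $G$.
\item The upper bound needs separate geometric input. For instance, for $Z$ closed in $\mathbb P^n_R$ you can push forward to $\mathbb P^n_R$, where Beilinson's generator $\bigoplus_{i=0}^n\mathcal O(-i)$ has discrete endomorphism algebra of finite global dimension. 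You would then still have to reduce the general proper case to this one.
\end{itemize}

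Second, even granting boundedness, you never show that the cohomology sheaves $\mathcal H^i(K)$ are coherent. Finite generation of the $R$-modules $H^j\mathrm{RHom}(G,K)$ does not formally give this, for the same lack of $t$-exactness.

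As written, (2) has only been reduced to its own absolute case. The missing ingredient is an actual proof that $p_{Y*}(p_X^*P\otimes K)\in\Coh(Y)$ for all $P\in\Perf(X)$ forces $K$ to be bounded with coherent cohomology sheaves.
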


When $Y$ is regular, $\Perf(Y) = \Coh(Y)$ and so functors between perfect sheaves are given by coherent kernels, and we would have a similar correspondence between convolution on the left and the composition of functors on the right \cite{bznp2017}.

Now we are equipped to talk about sheaves of categories as defined and studied in \cite{G2015}.  The general objects of Rozansky--Witten theory of a complex symplectic manifold are sheaves of categories over complex Lagrangians \cite{KAPUSTIN2009295}\cite{KR}.  Even though in the later sections we are modelling such a theory using $\Perf(X)$ modules, here we give an introduction of quasicoherent sheaves of categories and establish an equivalence with $\QCoh(X)$ modules for certain nice spaces $X$.  It is with such an intuition in mind, that we view $\Perf(X)$ modules as sheaves of categories.  We shall go over the definitions and some results of \cite{G2015}.

\begin{defn}
We first define a functor from $\dAff^{\mathrm{op}}$ to $\ooCat$ given by
$$S \mapsto \mathrm{Mod}_{\QCoh(S)}(\mathcal{P}r_{st}^L),$$
and perform right Kan extension along $\dAff^{\mathrm{op}}\hookrightarrow \mathrm{PreStk}^{\mathrm{op}}$ to obtain the functor $\ShvCat$.  The value of $\ShvCat$ on prestack $ Y$ is the quasi-coherent sheaves of categories on $ Y$.
\end{defn}

The category $\ShvCat( Y)$ is the limit of $\mathrm{Mod}_{\QCoh(S)}(\mathcal{P}r_{st}^L)$ where the limit is taken over derived affine schemes over $ Y$ with the arrows given by
$$S\xrightarrow{f} S', \quad \mathrm{Mod}_{\QCoh(S')}(\mathcal{P}r_{st}^L)\xrightarrow{\QCoh(S)\otimes_{\QCoh(S')} (-)} \mathrm{Mod}_{\QCoh(S)}(\mathcal{P}r_{st}^L).$$
We can think of an object in $\ShvCat( Y)$ to be objects in $\mathrm{Mod}_{\QCoh(S)}(\mathcal{P}r_{st}^L)$ whenever $S$ is a derived affine over $  Y$ satisfying compatibilities with respect to composition of morphisms.

\begin{lem}
The category $\ShvCat( Y)$ has a symmetric monoidal structure, given by component-wise tensor product, with unit $\QCoh_{/ Y}$ which assigns $\QCoh(S)$ for each derived affine $S$ over $Y$.
\end{lem}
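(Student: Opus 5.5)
The approach is to construct the symmetric monoidal structure objectwise on the diagram defining $\ShvCat(Y)$ and then pass to the limit. By Definition~2.23, $\ShvCat(Y) = \lim_{S\to Y} \mathrm{Mod}_{\QCoh(S)}(\mathcal Pr_{st}^L)$, with the limit taken over $(\dAff_{/Y})^{\mathrm{op}}$.

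The first step is to show that each $\mathrm{Mod}_{\QCoh(S)}(\mathcal Pr_{st}^L)$ is symmetric monoidal under the relative tensor product $-\otimes_{\QCoh(S)}-$, with unit $\QCoh(S)$. This is the standard statement of \cite[4.5]{HA}: $\QCoh(S)$ is a commutative algebra object of $\mathcal Pr_{st}^L$, which is presentably symmetric monoidal because its tensor product commutes with colimits in each variable, as recalled in \S2.3. Hence its category of modules inherits a symmetric monoidal structure via the bar construction.

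The second step is to show that the transition functors are symmetric monoidal. For $f:S\to S'$, the base change $\QCoh(S)\otimes_{\QCoh(S')}(-)$ is the extension of scalars along the commutative algebra map $f^*:\QCoh(S')\to\QCoh(S)$. By \cite[4.5.3]{HA}, extension of scalars along a map of commutative algebras is symmetric monoidal. Concretely, this rests on the natural equivalence
\[
(\QCoh(S)\otimes_{\QCoh(S')}M)\otimes_{\QCoh(S)}(\QCoh(S)\otimes_{\QCoh(S')}N)\simeq \QCoh(S)\otimes_{\QCoh(S')}(M\otimes_{\QCoh(S')}N),
\]
and the unit $\QCoh(S')$ is carried to $\QCoh(S)$.

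The main obstacle is functoriality at the level of $\infty$-categories. We need not just symmetric monoidal transition functors one at a time, but a functor
\[
\dAff^{\mathrm{op}}\to \mathrm{CAlg}(\ooCat), \qquad S\mapsto \mathrm{Mod}_{\QCoh(S)}(\mathcal Pr_{st}^L),
\]
with all higher coherences. To obtain it, first note that $S\mapsto\QCoh(S)$ is a functor $\dAff^{\mathrm{op}}\to\mathrm{CAlg}(\mathcal Pr_{st}^L)$. Then compose with the functor $\mathrm{CAlg}(\mathcal Pr^L)\to\mathrm{CAlg}(\ooCat)$, $A\mapsto\mathrm{Mod}_A$, which is supplied by Lurie's construction of module categories as a functor in the commutative algebra \cite[4.5.3]{HA}.

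Finally, the forgetful functor $\mathrm{CAlg}(\ooCat)\to\ooCat$ preserves and creates limits, because limits of commutative algebra objects in a cartesian symmetric monoidal category are computed underlying. Therefore the right Kan extension along $\dAff^{\mathrm{op}}\hookrightarrow\mathrm{PreStk}^{\mathrm{op}}$ may be taken in $\mathrm{CAlg}(\ooCat)$, and its underlying category is $\ShvCat(Y)$. The resulting tensor product is componentwise, $(\mathcal F\otimes\mathcal G)(S)=\mathcal F(S)\otimes_{\QCoh(S)}\mathcal G(S)$. The unit is the compatible family of units $\QCoh(S)$, which is precisely $\QCoh_{/Y}$.
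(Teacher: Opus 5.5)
The paper gives no proof of this lemma. It is recorded as background from Gaitsgory \cite{G2015}, where the componentwise symmetric monoidal structure is likewise only asserted, so there is no argument in the paper to compare yours against.

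Your argument is the standard justification behind that assertion, and it is correct. You realize $S\mapsto \mathrm{Mod}_{\QCoh(S)}(\mathcal Pr^L_{st})$ as a functor into symmetric monoidal $\infty$-categories by composing $S\mapsto \QCoh(S)$ with the functoriality of module categories in the commutative algebra \cite{HA}. Because limits in $\mathrm{CAlg}$ are computed on underlying objects, the right Kan extension defining $\ShvCat(Y)$ lifts to symmetric monoidal $\infty$-categories. Its tensor product is componentwise, and its unit is the compatible family $\QCoh(S)$, i.e.\ $\QCoh_{/Y}$.

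Two small points to tidy up; neither affects the argument:
\begin{enumerate}
\item $\mathcal Pr^L_{st}$ is not itself presentable, so ``presentably symmetric monoidal'' is the wrong phrase. What you actually use, and what the module-category results in \cite{HA} require, is that $\mathcal Pr^L_{st}$ admits geometric realizations and that its tensor product preserves colimits separately in each variable.
\item $\dAff_{/Y}$ is not small and the categories $\mathrm{Mod}_{\QCoh(S)}(\mathcal Pr^L_{st})$ are very large. So the limit, and the target category of commutative algebra objects in $\infty$-categories, should be taken in a larger universe.
\end{enumerate}
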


Given an object $\mathcal F$ in $\ShvCat( Y)$, we have a functor $\Gamma(-,\mathcal F)$ that gives a stable presentable category for each derived affine $S$ over $ Y$.  We can perform right Kan extension along $\dAff_{/ Y}^{\mathrm{op}}\hookrightarrow \mathrm{PreStk}_{/ Y}^{\mathrm{op}}$, obtain a section functor that gives sections of $\mathcal F$ over any prestack $ X$ over $ Y$.  The sections $\Gamma( X,\mathcal F)$ is given by the limit of $\Gamma(S,\mathcal F)$ with derived affines $S$ over $ X$ with the arrows given just as above.

\begin{ex}
For $ X$ over $ Y$, $\Gamma( X, \QCoh_{/ Y}) = \QCoh( X)$.
\end{ex}

So now we obtained a functor $ X \mapsto \Gamma( X, \mathcal F)$.

\begin{lem}
For $ X$ over $ Y$, the functor $\Gamma( X, -)$ with source in $\ShvCat( Y)$ is lax symmetric monoidal.
\end{lem}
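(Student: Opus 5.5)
Since $\Gamma(X,-)$ is defined by right Kan extension, I will establish the lax symmetric monoidal structure first over affines and then pass to the limit. Lax symmetric monoidality of a limit of lax symmetric monoidal functors is formal, so the only real input is on derived affine schemes.

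\emph{Affine case.} Take $S$ a derived affine over $Y$. Evaluation $\mathcal F\mapsto\Gamma(S,\mathcal F)$ is the projection from $\ShvCat(Y)=\lim_{S'\to Y}\mathrm{Mod}_{\QCoh(S')}(\mathcal Pr^L_{st})$ onto the factor at $S$. By the preceding lemma, the monoidal structure on $\ShvCat(Y)$ is componentwise, with unit $\QCoh_{/Y}$. So this projection is strictly symmetric monoidal into $\mathrm{Mod}_{\QCoh(S)}(\mathcal Pr^L_{st})$, equipped with the relative tensor product $\otimes_{\QCoh(S)}$. The forgetful functor $\mathrm{Mod}_{\QCoh(S)}(\mathcal Pr^L_{st})\to\mathcal Pr^L_{st}$ is lax symmetric monoidal: the structure maps $\mathcal M\otimes\mathcal N\to\mathcal M\otimes_{\QCoh(S)}\mathcal N$ are the canonical maps from the first term of the bar construction to its geometric realization, and the unit is $\Vect\to\QCoh(S)$. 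This is standard, e.g.\ \cite[4.5]{HA}. Composing, $\Gamma(S,-)$ is lax symmetric monoidal into $\mathcal Pr^L_{st}$. If instead one wants the target to be $\QCoh(X)$-modules, take $\mathrm{Mod}_{\QCoh(S)}$ itself as the target, where the functor is strictly monoidal.

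\emph{Naturality in $S$.} For $f:S\to S'$ over $X$, the transition functor is $\QCoh(S)\otimes_{\QCoh(S')}(-)$, which is symmetric monoidal. The restriction $\Gamma(S',\mathcal F)\to\Gamma(S,\mathcal F)=\QCoh(S)\otimes_{\QCoh(S')}\Gamma(S',\mathcal F)$ is compatible with the lax structures, because base change commutes with relative tensor products:
$$\QCoh(S)\otimes_{\QCoh(S')}(\mathcal M\otimes_{\QCoh(S')}\mathcal N)\simeq(\QCoh(S)\otimes_{\QCoh(S')}\mathcal M)\otimes_{\QCoh(S)}(\QCoh(S)\otimes_{\QCoh(S')}\mathcal N).$$
Hence $S\mapsto\Gamma(S,-)$ is a diagram $\dAff_{/X}^{\mathrm{op}}\to\mathrm{Fun}^{\mathrm{lax}\otimes}(\ShvCat(Y),\mathcal Pr^L_{st})$.

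\emph{Passing to the limit.} I expect this to be the main obstacle: making the naturality above fully coherent, rather than just checking it up to homotopy. The way to do it is to work with $\infty$-operads. The category of lax symmetric monoidal functors, $\mathrm{Alg}_{\ShvCat(Y)^\otimes}(\mathcal Pr^{L,\otimes}_{st})$, is computed as a category of maps of $\infty$-operads. Limits of lax monoidal functors are therefore computed pointwise, since the forgetful functor from $\infty$-operad maps to underlying functors preserves limits (\cite[3.2.2]{HA}: limits in $\mathrm{Alg}$ are computed underlying). So $\Gamma(X,-)=\lim_{S\to X}\Gamma(S,-)$ inherits a lax symmetric monoidal structure. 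Concretely, the structure map is
$$\Gamma(X,\mathcal F)\otimes\Gamma(X,\mathcal G)\to\lim_S\bigl(\Gamma(S,\mathcal F)\otimes\Gamma(S,\mathcal G)\bigr)\to\lim_S\Gamma(S,\mathcal F\otimes\mathcal G).$$
The first map is the canonical one from the tensor of limits to the limit of tensors, exactly as for the map $\boxtimes$ constructed earlier for $\QCoh$. The unit is $\Vect\to\lim_S\QCoh(S)=\QCoh(X)$.

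Finally, I will note that this map is generally not an equivalence, since $\otimes$ does not commute with limits. That is why the lemma only claims a lax structure.
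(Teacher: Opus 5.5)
The paper gives no proof of this lemma; it is recalled from \cite{G2015}. There the lax structure comes from an adjunction, not from a limit of pointwise structures. Write $\pi\colon X\to Y$. Then $\Gamma(X,-)$ on $\ShvCat(Y)$ is a composite of two functors. The first is $\pi^*\colon\ShvCat(Y)\to\ShvCat(X)$, restriction of the limit along $\dAff_{/X}\to\dAff_{/Y}$; it is symmetric monoidal because both monoidal structures are componentwise. The second is $\Gamma(X,-)\colon\ShvCat(X)\to\mathcal{P}r^L_{st}$, which is right adjoint to $\mathcal C\mapsto\mathcal C\otimes\QCoh_{/X}$:
\[
\Hom_{\ShvCat(X)}(\mathcal C\otimes\QCoh_{/X},\mathcal F)\simeq\lim_{S}\Hom_{\mathrm{Mod}_{\QCoh(S)}}(\mathcal C\otimes\QCoh(S),\Gamma(S,\mathcal F))\simeq\Hom(\mathcal C,\Gamma(X,\mathcal F)).
\]
This left adjoint is symmetric monoidal. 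Componentwise it is the free-module functor, and its components are compatible with no extra work because they are induced by the unit maps $\Vect\to\QCoh(S)$ out of the initial commutative algebra. So \cite[Corollary 7.3.2.7]{HA} makes $\Gamma(X,-)$ lax symmetric monoidal, and composing with $\pi^*$ finishes the proof.

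Your route is genuinely different: strict monoidality of evaluation at $S$, then lax monoidality of the forgetful functor, then $\lim_S$. It yields the same structure maps, since the $S$-component of the mate of the adjunction is exactly your composite. But the step that carries the content is the one you flagged, and your remedy does not close it. The result you cite from \cite[\S 3.2.2]{HA} says that a diagram $\dAff_{/X}^{\mathrm{op}}\to\mathrm{Alg}_{\ShvCat(Y)}(\mathcal{P}r^L_{st})$ has a limit computed on underlying functors. It does not produce such a diagram. What you actually check is that each restriction $\Gamma(S',-)\to\Gamma(S,-)$ is compatible with the lax structures up to homotopy. That is not a coherent functor into lax symmetric monoidal functors.

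This can be repaired, but it takes real bookkeeping. The restrictions are whiskered units of the adjunctions $\QCoh(S)\otimes_{\QCoh(S')}(-)\dashv\res$, whose left adjoints are symmetric monoidal. Using this, one assembles a lax symmetric monoidal functor $\ShvCat(Y)\to\mathrm{Fun}(\dAff_{/X}^{\mathrm{op}},\mathcal{P}r^L_{st})$ with the pointwise structure, working over the unstraightening of $S\mapsto\mathrm{Mod}_{\QCoh(S)}(\mathcal{P}r^L_{st})$. Then compose with $\lim$, which is lax symmetric monoidal as the right adjoint of the constant-diagram functor. The adjunction argument makes all of this unnecessary, because every coherence lives in a canonical symmetric monoidal left adjoint. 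What your approach buys is the explicit formula for the structure map, which is the right way to see that it is not an equivalence. Present it as a description of the lax structure, not as its construction.

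Finally, your aside about $\QCoh(X)$-module targets does not work as stated. The targets $\mathrm{Mod}_{\QCoh(S)}$ vary with $S$. To land in $\mathrm{Mod}_{\QCoh(X)}$ you must restrict along $\QCoh(X)\to\QCoh(S)$, and that restriction is only lax. In the paper, the $\QCoh(X)$-module structure on $\Gamma(X,\mathcal F)$ is a consequence of this lemma, not an input to it.
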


Since $\QCoh_{/ Y}$ is the unit in $\ShvCat( Y)$, there is a $\Gamma( X, \QCoh_{/ Y}) = \QCoh( X)$ action on $\Gamma( X, \mathcal F)$, $\Gamma( X, \mathcal F)$ is a $\QCoh(X)$ module.  So from now on we consider the functor $\Gamma( X, -)$ to be from $\ShvCat( Y)$ to $\mathrm{Mod}_{\QCoh(X)}(\mathcal{P}r_{st}^L)$.  In particular, we have the functor $\Gamma_{ Y} = \Gamma( Y, -): \ShvCat( Y)\rightarrow \mathrm{Mod}_{\QCoh(Y)}(\mathcal{P}r_{st}^L)$.

\begin{lem}
The functor $\Gamma_{ Y}$ has a left adjoint $\mathrm{Loc}_ Y$, which is given by:
$$\Gamma(S, \mathrm{Loc}_ Y (M)) = \QCoh(S)\otimes_{\QCoh( Y)} M,$$
where $M$ is a $\QCoh( Y)$ module and $S$ a derived affine over $ Y$.
\end{lem}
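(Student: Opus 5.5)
The plan is to construct $\mathrm{Loc}_Y$ by hand and then verify the adjunction objectwise over the limit defining $\ShvCat(Y)$. Given a $\QCoh(Y)$-module $M$, for each derived affine $u:S\to Y$ I set $\mathrm{Loc}_Y(M)_S = \QCoh(S)\otimes_{\QCoh(Y)} M$, using the symmetric monoidal pullback $u^*:\QCoh(Y)\to\QCoh(S)$ to regard $\QCoh(S)$ as a $\QCoh(Y)$-algebra.

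First, these local values form an object of the limit. For $f:S\to S'$ over $Y$, associativity of relative tensor products (bar construction, \cite[4.4]{HA}) gives
$$\QCoh(S)\otimes_{\QCoh(S')}\bigl(\QCoh(S')\otimes_{\QCoh(Y)}M\bigr)\simeq \QCoh(S)\otimes_{\QCoh(Y)}M,$$
coherently in $f$. This makes $S\mapsto \QCoh(S)\otimes_{\QCoh(Y)}M$ a coCartesian section over $\dAff_{/Y}^{\mathrm{op}}$, hence an object of $\ShvCat(Y)$. The construction is functorial in $M$.

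The heart of the argument is a mapping-space computation. For $\mathcal F\in\ShvCat(Y)$, mapping spaces in a limit of categories are limits of the local mapping spaces:
$$\mathrm{Map}_{\ShvCat(Y)}(\mathrm{Loc}_Y M,\mathcal F)\simeq\lim_{S\to Y}\mathrm{Map}_{\QCoh(S)}\bigl(\QCoh(S)\otimes_{\QCoh(Y)}M,\Gamma(S,\mathcal F)\bigr).$$
Extension/restriction of scalars along $\QCoh(Y)\to\QCoh(S)$ turns each term into $\mathrm{Map}_{\QCoh(Y)}(M,\Gamma(S,\mathcal F))$. Since $\mathrm{Map}_{\QCoh(Y)}(M,-)$ commutes with limits, and the forgetful functor from $\QCoh(Y)$-modules preserves limits, the limit becomes
$$\mathrm{Map}_{\QCoh(Y)}\bigl(M,\lim_S\Gamma(S,\mathcal F)\bigr)=\mathrm{Map}_{\QCoh(Y)}\bigl(M,\Gamma_Y(\mathcal F)\bigr).$$
Here one must check that the $\QCoh(Y)$-module structure on the limit agrees with the one defined above via lax monoidality of $\Gamma(Y,-)$ and the unit $\QCoh_{/Y}$. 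Both come from the compatible $\QCoh(Y)\to\QCoh(S)$ actions, so they coincide.

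The main obstacle is coherence: the identifications must be natural in $S$ and not just objectwise. Getting this requires building everything at the level of the Cartesian/coCartesian fibration of module categories over $\dAff_{/Y}^{\mathrm{op}}$. I would handle it by citing the general fact that for a diagram of algebras $A\to A_S$, induction is left adjoint to restriction fiberwise, and that these adjunctions assemble into a relative adjunction \cite[7.3.2]{HA}. That yields the global adjunction $\mathrm{Loc}_Y\dashv\Gamma_Y$ on limits, which is also the argument of \cite{G2015}.
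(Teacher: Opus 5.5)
Your proposal is correct. The paper gives no argument of its own for this lemma and simply imports it from \cite{G2015}, so there is no separate proof to compare against; your construction of $\mathrm{Loc}_Y$ via $S\mapsto \QCoh(S)\otimes_{\QCoh(Y)}M$, together with the objectwise induction--restriction computation of mapping spaces, is the standard verification behind that citation. The coherence step you flag goes through as you suggest: induction preserves coCartesian edges over $\dAff_{/Y}^{\mathrm{op}}$, so the fiberwise adjunctions assemble into a relative adjunction, and composing its adjunction on sections with the diagonal/limit adjunction and restricting to coCartesian sections gives $\mathrm{Loc}_Y\dashv\Gamma_Y$.
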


\begin{defn}
A derived prestack $ Y$ is 1-affine if the functors $\Gamma_ Y$ and $\mathrm{Loc}_ Y$ are mutually inverse equivalences.
\end{defn}

\begin{thm}
\cite{G2015} Any quasi-compact, quasi-separated derived scheme is 1-affine.
\end{thm}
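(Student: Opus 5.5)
The plan is to follow Gaitsgory's strategy and reduce to the affine case via Zariski descent. For a derived affine $S=\Spec A$, the functor $\ShvCat(S)\to\mathrm{Mod}_{\QCoh(S)}(\mathcal{P}r_{st}^L)$ is an equivalence by definition, since the limit defining $\ShvCat(S)$ has an initial object $S\xrightarrow{\id}S$. So affine schemes are 1-affine, and the task is to propagate this through gluings.

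\textbf{Step 1: full faithfulness of $\mathrm{Loc}_Y$.} It suffices to show that the unit $M\to\Gamma_Y(\mathrm{Loc}_Y M)$ is an equivalence. Choose a finite Zariski atlas $\{U_i\}$ of $Y$, which exists by quasi-compactness. Quasi-separatedness guarantees that every finite intersection $U_{i_0\cdots i_n}$ is again quasi-compact and quasi-separated, and in fact is covered by finitely many affines.

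\begin{itemize}
\item First we must check that $\ShvCat$ satisfies Zariski descent. Then $\Gamma_Y(\mathcal F)=\mathrm{Tot}\,\Gamma(U_\bullet,\mathcal F)$ over the \v{C}ech nerve of the cover.
\item Taking $\mathcal F=\mathrm{Loc}_Y M$ gives
$$\Gamma_Y(\mathrm{Loc}_Y M)=\mathrm{Tot}\big(\QCoh(U_\bullet)\otimes_{\QCoh(Y)}M\big).$$
\item We want this to equal $M=\QCoh(Y)\otimes_{\QCoh(Y)}M$, using $\QCoh(Y)\simeq\mathrm{Tot}\,\QCoh(U_\bullet)$ (Lemma 2.14).
\item So the key point is that $-\otimes_{\QCoh(Y)}M$ commutes with this finite totalization.
\end{itemize}

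For an open immersion $j:U\to Y$ we have $\QCoh(U)=\QCoh(Y)/\QCoh(Y)_{Y\setminus U}$, and $j^*$ has a fully faithful, $\QCoh(Y)$-linear right adjoint $j_*$. This makes $\QCoh(U)$ an idempotent algebra, so tensoring with $M$ is a localization of $M$.

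The totalization is then a finite limit of Bousfield-type localizations. I plan to prove commutation by induction on the number of opens, via the Mayer--Vietoris square for $Y=U\cup V$. Write $\mathcal C_U=\QCoh(U)\otimes_{\QCoh(Y)}M$, and similarly for $V$ and $U\cap V$. The claim is that $M\to\mathcal C_U\times_{\mathcal C_{U\cap V}}\mathcal C_V$ is an equivalence. This follows because the localization functors commute, using
$$\QCoh(U)\otimes_{\QCoh(Y)}\QCoh(V)=\QCoh(U\cap V)$$
by Theorem 2.20(1), and because in $\mathcal{P}r_{st}^L$ a pullback of this shape can be computed via the recollement $M_{Y\setminus U}\to M\to\mathcal C_U$. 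This Mayer--Vietoris computation for modules is where I expect the main work to lie.

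\textbf{Step 2: conservativity of $\Gamma_Y$.} Suppose $\mathcal F$ has $\Gamma_Y\mathcal F=0$. For each open $U$, I would show that $\Gamma(U,\mathcal F)=\QCoh(U)\otimes_{\QCoh(Y)}\Gamma_Y\mathcal F$. This uses the same Mayer--Vietoris induction together with descent, the point being that restriction to an open is a localization compatible with $\Gamma$. Hence $\Gamma(U,\mathcal F)=0$ on all affines, so $\mathcal F=0$.

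Combined with full faithfulness of $\mathrm{Loc}_Y$, this makes the counit an equivalence: $\mathrm{Loc}_Y\Gamma_Y\mathcal F\to\mathcal F$ becomes an equivalence after applying $\Gamma_Y$. Conservativity must be applied to the cofiber here, which uses stability of $\ShvCat(Y)$ as a category enriched appropriately. Alternatively, I would directly check the counit on affines using the base-change formula just proved.

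The main obstacle is the base-change/commutation statement $\QCoh(U)\otimes_{\QCoh(Y)}\lim=\lim(\QCoh(U)\otimes_{\QCoh(Y)}-)$ for finite Zariski \v{C}ech diagrams. I would attack it first by exploiting that $j_*$ is $\QCoh(Y)$-linear and fully faithful, so that all the relevant functors are explicit localizations of $M$ and the limit is computed on underlying categories.
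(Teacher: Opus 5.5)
The paper gives no proof of this statement; it is simply quoted from Gaitsgory. So your sketch has to stand on its own. Its architecture is the standard one: Zariski descent for $\ShvCat$, quasi-compact opens treated as $\QCoh(Y)$-linear idempotent localizations via the fully faithful, $\QCoh(Y)$-linear $j_*$, and a Mayer--Vietoris induction. The module computation in Step 1 is right. There the limit sits in the first tensor factor, and the recollement $M_{Y\setminus U}\to M\to M_U$ together with the excision $M_{Y\setminus U}\simeq (M_V)_{Y\setminus U}$ (valid because $Y\setminus U\subset V$) is exactly what makes $M\to M_U\times_{M_{U\cap V}}M_V$ an equivalence.

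Step 2, however, fails as written. $\ShvCat(Y)$, like $\mathrm{Mod}_{\QCoh(S)}(\mathcal{P}r^L_{st})$, is not stable. So knowing that $\Gamma_Y\mathcal F=0$ forces $\mathcal F=0$ does not give conservativity, and no cofiber argument is available: already in $\mathcal{P}r^L_{st}$ the map $\mathcal C\to 0$ has cofiber $0\sqcup_{\mathcal C}0\simeq 0$ without being an equivalence. Promote your ``alternative'' to the actual argument. Prove, naturally in $\mathcal F$, that $\QCoh(U)\otimes_{\QCoh(Y)}\Gamma_Y\mathcal F\to\Gamma(U,\mathcal F)$ is an equivalence for every quasi-compact open $U$. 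For affine $U$ this map is literally the counit $\mathrm{Loc}_Y\Gamma_Y\mathcal F\to\mathcal F$ evaluated at $U$, since $\Gamma(U,\mathrm{Loc}_Y N)=\QCoh(U)\otimes_{\QCoh(Y)}N$ by definition. Each $\mathcal F|_{U_i}$ is determined by $\Gamma(U_i,\mathcal F)$, so Zariski descent then makes the counit an equivalence, with no conservativity statement needed. The limit-commutation this step uses is in the module variable, and it is harmless. $\QCoh(U)\otimes_{\QCoh(Y)}N$ is the full subcategory of $N$ on objects $n$ for which $n\to j_*\mathcal O_U\cdot n$ is an equivalence. That condition is checked componentwise in a limit computed in $\mathrm{Cat}$ (equivalently, $\QCoh(U)$ is dualizable over $\QCoh(Y)$).

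Two smaller points. First, the formula $\Gamma_Y(\mathrm{Loc}_Y M)=\mathrm{Tot}(\QCoh(U_\bullet)\otimes_{\QCoh(Y)}M)$ presupposes the result on the intersections $U_{i_0\cdots i_n}$. These are quasi-affine rather than affine unless $Y$ is separated, so the induction has to be organized. One way is the usual induction principle for qcqs schemes, applied first to quasi-affine schemes covered by principal opens and then in general: show that if $U$, $V$ and $U\cap V$ are $1$-affine then so is $U\cup V$. The two computations above do precisely this. The second one uses $1$-affineness of $V$ and of $U\cap V$ to identify $\QCoh(U)\otimes_{\QCoh(Y)}\Gamma(V,\mathcal F)$ with $\Gamma(U\cap V,\mathcal F)$.

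Second, Theorem 2.20(1) is stated for perfect stacks, which have affine diagonal by definition, and a qcqs scheme need not have one (for example, the affine plane with a doubled origin). However, $\QCoh(U)\otimes_{\QCoh(Y)}\QCoh(V)\simeq\QCoh(U\cap V)$ follows directly from your localization description, by tensoring $\QCoh(Y)_{Y\setminus U}\to\QCoh(Y)\to\QCoh(U)$ with $\QCoh(V)$.
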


We now define some functors between $\QCoh(X)$ and $\QCoh(Y)$ modules given a morphism $f:X\rightarrow Y$ following \cite{G2015}.  Suppose we have a morphism $f:X\rightarrow Y$, then we have $f^*: \QCoh(Y)\rightarrow \QCoh(X)$ which is symmetric monoidal.  We have a natural functor $\res_f: \mathrm{Mod}_{\QCoh(X)}(\mathcal{P}r_{st}^L)\rightarrow \mathrm{Mod}_{\QCoh(Y)}(\mathcal{P}r_{st}^L)$ by restricting the action
$$\QCoh(Y)\otimes M \xrightarrow{f^*\otimes \mathrm{Id}} \QCoh(X)\otimes M\rightarrow M.$$
We also have the induction functor $\ind_f: \mathrm{Mod}_{\QCoh(Y)}(\mathcal{P}r_{st}^L)\rightarrow\mathrm{Mod}_{\QCoh(X)}(\mathcal{P}r_{st}^L)$ given by
$$M \mapsto \QCoh(X)\otimes_{\QCoh(Y)} M$$
which is a left adjoint to $\res_f$.  Now Suppose we have $p_1:X\times Y\rightarrow X$, $p_2: X\times Y\rightarrow Y$ projection maps, then the exterior tensor of a $\QCoh(X)$ module $M$ and a $\QCoh(Y)$ module $N$ is defined to be
$$M\boxtimes N = \ind_{p_1}M\otimes_{\QCoh(X\times Y)} \ind_{p_2}N.$$

However, throughout this paper we will work with $\Perf(X)$ modules.  $\Perf(X)$ inherits the symmetric monoidal structure and the functors $f^*$ preserves compact objects so will map perfect sheaves to perfect sheaves \cite{gr}.  The functors and operations above can likewise be defined on $\Perf(X)$ modules.  In light of Theorem 2.3.10, we should consider $\Perf(X)$ modules to be sheaves of categories on $X$ with some finiteness conditions.  The category $\mathrm{Cat}_\infty^{\mathrm{perf}}$ is compactly generated, complete and cocomplete \cite[Corollary 4.25]{Blumberg_2013}, in particular $\mathrm{Cat}_\infty^{\mathrm{perf}}$ is presentable.  So the category of module objects $\perfxmod$ is presentable \cite[Corollary 4.2.3.7]{HA} and compactly generated \cite[Lemma 2.1]{aoki2025higherpresentablecategorieslimits}.

\section{Matrix Factorizations}
There are two categories we can associate to a potential $W: X\rightarrow \A^1$.  One is the category of singularities $\mathrm{DSing}(X_s) = \Coh(X_s)/\Perf(X_s)$ \cite{Orlov_2003}, where $X_s = X\times_{\A^1}s$ is the fiber at $s$ and that the Verdier quotient of categories is taken in a suitable category of categories.  For such a definition to make sense, we would need perfect complexes to be coherent, and this would require our spaces to be eventually coconnective, i.e. our spaces are covered by affines $\Spec(A)$ where $H^{-i}(A)=0$ for $i>>0$, which is equivalent to the structure sheaf being coherent \cite{indcoh}.  When $X$ is regular, $\Coh(X) = \Perf(X)$, and so the category of singularities $\mathrm{DSing}(X)=0$, hence the name.  This category measures the singularities of $X$, and localizes to the singularities of $X$: suppose that an open subscheme $i: U\rightarrow X$ contains the singularities of $X$, then the map induced by the pullback $i^*$ on $\Coh(X)$ gives an equivalence on $\mathrm{DSing}(X)$ and $\mathrm{DSing}(U)$ \cite[Proposition 1.14]{Orlov_2003}.  The other one is called the category of matrix factorizations which is introduced in \cite{EMF}.  This category is a $\ZZ/2\ZZ$-graded category of curved complexes of coherent sheaves, i.e. the differentials does not square to zero but is the potential $W$ multiplied by the identity map.  For simplicity, we go through the definition of matrix factorizations assuming $X = \Spec(A)$ is affine.  The non-affine case is treated in \cite{Orlov_2011}.  The canonical reference is \cite{Orlov_2003}.

\subsection{Category of Singularities}
\begin{defn}
The category of singularities of a eventually coconnective Noetherian derived scheme $X$ $\mathrm{DSing}(X)$ is given to be the cofiber
\begin{center}
\begin{tikzcd}
\Perf(X) \arrow[d] \arrow[r, hook] & \Coh(X) \\
0                                  &        
\end{tikzcd}
\end{center}
in $\mathrm{Cat}_\infty^{\mathrm{perf}}$.
\end{defn}

Here we have taken our quotient in $\mathrm{Cat}_\infty^{\mathrm{perf}}$.  There are two environment to take this quotient in: one is our $\mathrm{Cat}_\infty^{\mathrm{perf}}$, the category of small stable idempotent complete categories which both $\Coh(X)$ and $\Perf(X)$ belongs and the other is the category of small stable categories with exact functors as morphisms $\mathrm{Cat}_\infty^{\mathrm{ex}}$.  Both $\mathrm{Cat}_\infty^{\mathrm{ex}}$ and $\mathrm{Cat}_\infty^{\mathrm{perf}}$ are cocomplete \cite[Corollary 4.25]{Blumberg_2013}.  Since the idempotent completion functor
$$\mathrm{Idem}: \mathrm{Cat}_\infty^{\mathrm{ex}} \rightarrow \mathrm{Cat}_\infty^{\mathrm{perf}}$$
is left adjoint to the inclusion $\mathrm{Cat}_\infty^{\mathrm{perf}} \hookrightarrow \mathrm{Cat}_\infty^{\mathrm{ex}}$ \cite{Blumberg_2013}, the idempotent completion functor preserves colimits, so we see that the quotient taken in $\mathrm{Cat}_\infty^{\mathrm{perf}}$ is the idempotent completion of the quotient taken in $\mathrm{Cat}_\infty^{\mathrm{ex}}$.  So the homotopy category of the quotient in $\mathrm{Cat}_\infty^{\mathrm{perf}}$, is the idempotent completion of $\mathrm{Ho}(\Coh(X))/\mathrm{Ho}(\Perf(X))$ \cite[Proposition 5.15]{Blumberg_2013}.  So our quotient is a stable $(\infty,1)$ enhancement of the idempotent completed traingulated category of singularities of Orlov \cite{Orlov_2003}.  Our definition for a fiber at 0 $\mathrm{DSing}(X_0)$ is then equivalent to the definition of Preygel's category of matrix factorizations \cite[Proposition 3.4.1]{preygel2011thomsebastianidualitymatrix}, which is defined by
$$\Coh(X_0)\otimes_{\Perf(k[\beta])} \Perf(k(\beta)),$$
where its tensor product is defined in $\mathrm{Cat}_\infty^{\mathrm{perf}}$ and $\beta$ is of degree 2.  To see why $\Coh(X_0)$ is $k[\beta]$-linear, we see that the derived base loop space of $\A^1$ at $0$ $\mathcal L_0\A^1 = 0\times_{\A^1} 0 = \Spec (k[\eta]), \,\,\,\, |\eta|=-1$ acts by loop composition on the right on $X_0 = X\times_{\A^1} 0$ \cite[Construction 3.1.5]{preygel2011thomsebastianidualitymatrix}, so $\Coh(\mathcal L_0 \A^1) = \Coh(k[\eta])$ acts on $\Coh(X_0)$.  Now by Koszul duality $\Coh(k[\eta]) = \Perf(k[\beta])$.  Preygel's and our definitions are universal in the following sense \cite[Corollary 3.4.3]{preygel2011thomsebastianidualitymatrix}: the exact sequence in $\mathrm{Cat}_\infty^{\mathrm{perf}}$
$$\Perf(X_0)\rightarrow \Coh(X_0) \rightarrow \mathrm{DSing}(X_0)$$
is obtained from the universal example $X = \mathrm{pt}$:
$$\Perf(\mathcal L_0\A^1) \rightarrow \Coh(\mathcal L_0\A^1)\rightarrow \mathrm{DSing}(\mathcal L_0\A^1)$$
by tensoring $\Coh(X_0)\otimes_{\Coh(\mathcal L_0\A^1)}$.

\subsection{Category of Matrix Factorizations}
\begin{defn}
\cite{KD} For $s\in \A^1$, the category of matrix factorizations at $s$ $\mathrm{MF}_s(X,W)$ is a $\ZZ/2\ZZ$-graded category with objects 
$$P = ( \xymatrix{P_0 \ar@/^/[r]^{p_0} & P_1 \ar@/^/[l]^{p_1}} ),$$
with both $P_0$, $P_1$ finitely generated projective $A$-modules and that both $p_0p_1$ and $p_1p_0$ is $(W - s)\cdot \operatorname{id}$.  The morphisms are given by
$$\Hom(P,Q) = \bigoplus_{i,j} \Hom(P_i, Q_j),$$
with grading given by $i-j \mod 2$, with differential given by
$$f \mapsto q f - (-1)^{|f|} fp.$$
\end{defn}

This definition is the dg-enhancement of the one given in \cite{Orlov_2003}.  We can extend this definition to a commutative differential graded algebra $(A,d_A)$ where $W$ is in the even part of $A$ and that $dW = 0$ \cite{block}\cite{KR}.  This is called a curved cdga.  A $\ZZ/2\ZZ$-dg module over $A$ is a graded module $(M,d_M)$ such that $d_M$ and $d_A$ are compatible:
$$d_M(am) = d_A(a) m + (-1)^{|a|}ad_M(m),$$
such that $d_M^2 = (W-s) \cdot \operatorname{id}$.  A $\ZZ/2\ZZ$-dg module is perfect if it is of the form $P\otimes_{A^0} A$ where $P$ is a projective $\ZZ/2\ZZ$-graded module over the even part of $A$, $A^0$ \cite{KR}.  So the resulting category consists of objects that are perfect $\ZZ/2\ZZ$-dg modules over $A$.  Definition 3.2.1 is the special case where $A$ is concentrated in degree $0$.

Given an object $P$ in $\mathrm{MF}_0(X,W)$ and that $W$ is not zero on any component, then we have a short exact sequence \cite{Orlov_2003}
$$0\rightarrow P_1\xrightarrow{p_1} P_0 \rightarrow \mathrm{coker}p_1\rightarrow 0.$$
The sheaf $\mathrm{coker}p_1$ is annhilated by $W$ and so is a sheaf on $X_0$ in $\Coh(X_0)$, and hence we obtain a map from $\mathrm{MF}_0(X,W) \rightarrow \mathrm{DSing}(X_0)$.  This map is an equivalence after idempotent completion \cite[Section 3.2]{Orlov_2003}.  When $W=0$, then by direct inspection, since the category of matrix factorizations is just the $\ZZ/2\ZZ$-graded perfect sheaves, one can see that the two categories are equivalent \cite[\nopp 3.4.4]{preygel2011thomsebastianidualitymatrix}.

Now suppose we are given two complex Lagrangians, represented by the graphs of $df$ and $dg$.  Then in Rozansky--Witten theory the $\Hom$ between these two complex Lagrangians should be given by the category of matrix factorizations of $g-f$, which is supported in the critical points of $g-f$ \cite{KR}.  Therefore what we want would be a direct sum of matrix factorizations of $g-f-s$ for $s\in\A^1$, or else we would be adding an additional constraint and looking at only the critical points in $g-f=0$.  As Kapustin and Rozansky have noted in \cite{KR}, $df$ is defined only up to a locally constant function, or when there is only one component, up to a number.  So we define
$$\mathrm{MF}(X,W) = \bigoplus_{s\in\A^1}\mathrm{DSing}(X_s).$$
Kapustin and Rozansky call this the augmented category.  Note that this definition is different (other than idempotent completion) from what Orlov, following Kontsevich's proposal, calls the category of B-branes in \cite{Orlov_2003}.  The category of B-branes is the direct product of $\mathrm{DSing}(X_s)$ rather than a direct sum.  When $X$ is regular, the product is finite and so the definitions agree.

\section{Rozansky--Witten Theory}

\subsection{Kapustin--Rozansky--Saulina's 2-category}
We will now go over some background on Rozansky--Witten theory and results contained in \cite{KR}. We are interested in the category of boundary conditions of a 3-dimensional topological sigma model.  A topological sigma model of dimension 3 with complex symplectic target gives a 3-category $\mathcal C$ with objects being the target manifolds and a contravariant duality functor that maps $(X,\omega)$ to $(X,-\omega)$, where $\omega$ is the holomorphic symplectic form.  The duality functor has the property that
$$\Hom_\mathcal C(X_1,X_2) = \Hom_\mathcal C(\operatorname{pt},X_1^\vee\times X_2),$$
where $X^\vee$ is the dual of $X$.  So we see that if we understand all the 2-categories of the form
$$\mathcal C_X = \Hom_\mathcal C(\operatorname{pt},X),$$
which is called the boundary conditions of $X$, and how objects of $\mathcal C_{X_1^\vee \times X_2}$ give rise to functors between $\mathcal C_{X_1}$ and $\mathcal C_{X_2}$, we will understand the 3-category $\mathcal C$.

In \cite{KAPUSTIN2009295} and \cite{KR}, Kapustin and Rozansky, along with Saulina in the former work, studied the boundary conditions of Rozansky--Witten theory and that they correspond to complex Lagrangians in the target space $X$ with complex fibrations.  In \cite{KR}, Kapustin and Rozansky modeled the theory with cotangent target using matrix factorizations.  The authors also mentioned how the objects are naturally associated to sheaves of categories.  For simplicity sake we will present here their approach for $T^*\CC^n$:
$$\mathrm{Obj}(\mathrm{KR}(T^*\CC^n)) = \{f \in \CC[x_1,\dots,x_n]\},$$
$$\mathrm{KR}(T^*\CC^n)(f,g) = \mathrm{MF}(\CC^n, g-f).$$
The composition of morphisms is given by
$$\mathrm{MF}(\CC^n, g-f) \times \mathrm{MF}(\CC^n, h-g) \rightarrow \mathrm{MF}(\CC^n, h-f),$$
$$(P,Q)\rightarrow P\otimes_{\CC[x_1,\dots,x_n]} Q, \,\,\,\, d_{P\otimes Q} = d_P\otimes \operatorname{id} + (-1)^{|\cdot|} \operatorname{id}\otimes d_Q.$$
Here the objects are the Lagrangians of $T^*\CC^n$ of the form of a graph $\Gamma_{df}$.  We can extend this definition as in \cite{KR} to fibrations over such Lagrangians by expanding our objects:
$$\mathrm{Obj}(\mathrm{KR}(T^*\CC^n)) = \{((y_1,\cdots,y_k), f): f \in \CC[x_1,\dots,x_n,y_1,\cdots,y_k]\},$$
$$\mathrm{KR}(T^*\CC^n)((y_1,\cdots,y_k, f),(z_1,\cdots,z_l,g)) = \mathrm{MF}(\CC^{n+k+l}, g-f).$$
The composition of morphisms is given similarly by tensor product except now it will not be over $\CC[x_1,\cdots,x_n]$ but over $\CC[x_1,\cdots,x_n,z_1,\cdots,z_l]$, where $z_i$'s are the variables of the intermediate object.  We should think of the objects $(y_1,\cdots,y_k,f)$ as follows \cite{KR}.  First we define
$$CV_f = \{p\in \CC^{n+k}: df_p = 0 \text{ on } T_p\CC^k\},$$
i.e. $df_p$ can be considered to be an element $T^*_p\CC^n$.  So we have a map
$$CV_f \rightarrow T^*\CC^n, \,\,\,\, p \mapsto df_p.$$
Under some nice conditions on $f$, the image of this map gives a Lagrangian submanifold, and that this map is a fibration.  So these objects should be thought of as fibrations over Lagrangian submanifolds.  When $k=0$, what we have is a one point fibration over $df$, and we recover graphs as above.  These, however, are not the most general objects that can be defined in $\mathrm{KR}(T^*\CC^n)$, notice that we only considered functions $f$ defined on vector bundles over $\CC^n$, to get more general objects we look at functions defined on fibrations over $\CC^n$ \cite{KR}.    

\subsection{Dualizability in Affine models}
In \cite{Brunner_2023}, Brunner, Carqueville and Roggenkamp form the affine Rozansky--Witten models 3-category $RW^{aff}$ with objects being $T^*\CC^n$, and that morphisms between $T^*\CC^k$ and $T^*\CC^l$ is given by $\mathrm{KR}(T^*\CC^{k+l})$.  According to the cobordism hypothesis \cite{BD}\cite{LCH}, a fully framed extended topological field theory is determined by a fully dualizable object in the target category (along with $G$-symmetry for the case of considering bordisms with $G$-structure), i.e. such a functor is determined by its value at a point.  The authors of \cite{Brunner_2023} proved that no object except the point in $RW^{aff}$ is fully dualizable, due to the fact that $T^*\CC^n$ is not compact for $n>0$, so one cannot construct a fully extended 3-dimensional topological field theory from the 3-category $RW^{aff}$.  However, the authors proved that if one truncates $RW^{aff}$ to a 2-category, by making the new 2-morphisms to be the isomorphism classes of 2-morphisms, every object in the truncated category is fully dualizable, and there is one fully extended oriented topological field theory for each positive $n$.

\subsection{Relationship with Topological Gauge Theories}
One of the guiding motivations of the author is to understand the following statement in \cite{T}:

\begin{repstat}{teleman}
Pure topological gauge theory in 3 dimensions for a compact Lie group $G$ is equivalent to the Rozansky-Witten theory for the BFM space of the Langlands dual Lie group $G^\vee$.
\end{repstat}

Here we would first have to explain some of the terms in the statement.  By a pure topological gauge theory in 3 dimensions, without delving too deep into topological field theories, we mean the category of $G$-categories, i.e. categories equipped with a topological or locally constant group action \cite{T}.  Intuitively, an action of $G$ on a category $\mathcal{C}$ will require $G$ to permute objects of $\mathcal{C}$ and sending morphisms from $x$ to $y$ to morphisms from $gx$ to $gy$, i.e. a map from $G$ to $\mathrm{End}(\mathcal{C})$.  For the action to be locally constant, a trivial neighbourhood of $e\in G$ acts trivially up to coherent homotopies and so we would have a morphism:
$$\Omega G\rightarrow \mathrm{Aut}(\mathrm{Id}_\mathcal{C}),$$
where $\Omega G$ is the based loop group \cite{T}.  Note that in \cite{T}, Teleman provisionally settles for the linearized definition when working with linear dg-categories, and topological actions of a connected $G$ is given by a $E_2$ algebra map from the singular chains $C_*\Omega G$ with the Pontrjagin product to the Hochschild cochains of $\mathcal C$.

Now suppose we take a step back, and take a look at the traditional theory of group actions on spaces.  A topological group action of a topological space $X$ by $G$  is a group action such that $G \times X\rightarrow X$ is continuous.  This is equivalent to looking at $(\infty,1)$-functors from $BG$ to the category of topological spaces $\mathrm{Top}$ \cite{AD2}.  So in light of this we can define analogously that a $G$-category to be an $(\infty,1)$-functor from $BG$ to the category of $(\infty,1)$-categories.  Here we are being vague and have not specified what environment we work on: stable, presentable etc.  Just as in usual equivariant homotopy theory, Elmendorf's theorem \cite{E}\cite[Chapter VI]{EHCT}\cite{RJP}\cite{MS2}\cite{AD2} tells us that $(\infty, 1)$-functors from $BG$ to $\mathrm{Top}$ is not enough, the above approach is not enough.  We will discuss the issue of what is needed in genuine homotopy theory of $G$-categories in Section 5.2.

Now we look at the other side of the correspondence.  The Langlands dual Lie group $G^\vee$ is the compact Lie group that has the dual root datum of $G$ \cite{AD3}.  The BFM space of a compact connected Lie group is studied by Bezrukavnikov, Finkelberg and Mirkovi\'{c} in \cite{Bezrukavnikov_Finkelberg_Mirkovic_2005}, which is the spectrum of $H_*^{G^\vee}(\Omega G^\vee; \CC)$ with Pontrjagin multiplication as the ring structure \cite{T} which was identified in \cite{Bezrukavnikov_Finkelberg_Mirkovic_2005} with the quotient $C_{G_\CC,\mathfrak g_\CC} / G_\CC$, where
$$C_{G_\CC, \mathfrak g_\CC} = \{(g,x): Ad_g(x) = x, x \text{ is regular}\} \subset G_\CC \times \mathfrak g_\CC,$$
a locally closed subvariety quotient out by the diagonal adjoint action of $G_\CC$ where $G_{\CC}$ is the complexification of $G$.  Under the isomorphism $\mathfrak g_\CC^\vee \simeq \mathfrak g_\CC$, we can dually consider the corresponding subset
$$C_{G_\CC, \mathfrak g_\CC^\vee} = \{(g,x): Ad_g^*(\eta) = \eta, \eta \text{ is regular}\} \subset G_\CC \times \mathfrak g_\CC^\vee,$$
which is a locally closed subvariety of the cotangent bundle of $G_\CC$ and the corresponding quotient given again by the diagonal action.  This quotient can also be viewed as a Hamiltonian quotient from an open subset of $T^*G_\CC$ where the zero fiber of the moment map is given by the corresponding dual of $C_{G_\CC, \mathfrak g_\CC^\vee}$ \cite{T}.  Technically in  \cite{Bezrukavnikov_Finkelberg_Mirkovic_2005}, Bezrukavnikov, Finkelberg and Mirkovi\'{c} are working with almost simple complex algebraic groups $G$, and are studying the equivariant homology of the affine Grassmannian of $G$ over $G(\CC[[t]])$.  However we can look at the maximal compact subgroup $K$ and $\Omega K$ is a topological model for the affine Grassmannian \cite{PS}\cite{Lam_2010}.  There are other characterizations listed in \cite{T}.  One can see that in the case of $G = T$ abelian, $BFM(T) = T^*T_{\CC}$, is just the cotangent bundle of the complexified torus \cite{T}.  On $BFM(G)$ there exist a Lagrangian $Z$ which is the zero fiber of the map inherited from
$$\pi: G_\CC \times \mathfrak g_\CC^\vee \rightarrow \mathfrak g_\CC^\vee / G_\CC,$$
the quotient projection map to the space of coadjoint orbits \cite{T}.  This Lagrangian $Z$ is $\Spec H_*(\Omega G^\vee)$ and when $G = T$ is abelian, the Lagrangian $Z$ is the zero section of $T^*T_\CC$ \cite{T}.

By Rozansky--Witten theory of $X$ we mean the category of boundary conditions of $X$ as mentioned in Section 4.1.  The more general objects one can have in this category would be sheaves of categories on the complex symplectic manifold $X$ with support on the complex Lagrangians of $X$ \cite{KAPUSTIN2009295}\cite{KR}\cite{T}.  Note that we have once again been vague and have not specified what types of sheaves of categories we are working with.  So the correspondence relates the category of $G$-categories with the $2$-category of boundary conditions of the complex symplectic manifold $BFM(G^\vee)$, which is the category of sheaves of categories with support on complex Lagrangians of $BFM(G^\vee)$.  Under this picture, we can view $G$-categories as sheaves of categories with Lagrangian support on $BFM(G^\vee)$ and that the intersections corresponds to the $\Hom$ of $G$-categories \cite{T}.  Now on the side of $G$-categories we have the trivial and regular representations just like in normal representation theory, where $\Hom(\mathrm{Triv}, \mathcal V) = \mathcal V^G$ is the fixed point category and $\Hom(\mathrm{Reg}, \mathcal V) = \mathcal V$ is the underlying category.  So correspondingly on the side of Rozansky--Witten theory, we have corresponding Lagrangians or sheaves of categories with Lagrangian support $Z_{\mathrm{triv}}, Z_{\mathrm{reg}}$, such that $\Hom(Z_{\mathrm{triv}}, \mathcal C)$ is the corresponding object in Rozansky--Witten theory of the fixed point category and $\Hom(Z_{\mathrm{reg}}, \mathcal C) = \mathcal C$ \cite{T}.  $Z_{\mathrm{reg}}$ is the Lagrangian $Z = \Spec H_*(\Omega G)$ in $BFM(G^\vee)$ as mentioned above and corresponds to the zero section in $T^*T_\CC^\vee$ in the case of abelian $G = T$.  When $G = T$ abelian, we additionally know that $Z_{\mathrm{triv}}$ is the conormal $T_{1}^*T_\CC^\vee$ at $1$ \cite{T}.  By Theorem 5.1.4, we know that the Hom of two objects is supported in the intersection of the supports of the two objects.  This is interesting since we can have objects that do not intersect with the the Lagrangian $Z = Z_{\mathrm{reg}}$ yet intersects with $Z_{\mathrm{triv}}$, which means having a trivial category with a nontrivial group action and a nontrivial fixed point category.  Such objects can easily be seen in the abelian case, where $Z_{\mathrm{triv}} = T_1^*T_\CC^\vee$ is the conormal at 1 and $Z_{\mathrm{reg}} = T_{T_\CC^\vee}^*T_\CC^\vee$ is the zero section, any Lagrangian that does not intersect the zero section but intersects the conormal at 1 will do.  A similar phenomenon appears in homotopy theory of spaces, where you have a contractible universal bundle $ES^1$ with the $S^1$-fixed point space not being a point.  We will briefly sketch how to construct a trivial category with a nontrivial $S^1$ action.  

\begin{ex}
Suppose we have a smooth variety $X$ with a superpotential $W: X\rightarrow \CC^\times$.  Then we have a topological $S^1$ action on $\Coh(X)$ defined by
$$\mathcal F \xrightarrow{W} \mathcal F.$$
The fixed point category $\Coh(X)^{S^1}$ is $\Coh(W^{-1}(1))$.  Now suppose we take $X = \CC^\times$, $W = z$, and we consider the category
$$\mathcal C = \Coh(W^{-1}(1)) / \Perf(W^{-1}(1))$$
which is trivial since $W^{-1}(1) = 1$ is smooth and inherits the $S^1$ action from above.  Now if we look at the $S^1$ fixed point category, we are looking at the condition $W^{-1}(1)$ twice and so we are considering
$$\{1\}\times_{\CC^\times} \{1\} = \Spec(\CC[\eta]), |\eta| = -1,$$
which is the based loop space of $\CC^\times$ at 1.  So we have the $S^1$ fixed point category
$$\mathcal C^{S^1} = \Coh(\CC[\eta]) / \Perf(\CC[\eta]),$$
which is nontrivial since the augmentation module is not perfect.
\end{ex}

\section{Sheaves of Categories on $X$: the Conic Story}
\subsection{Singular Support of $\Perf(X)$-modules}

Suppose $X$ is a smooth quasi-compact separated Noetherian scheme, then the $\Perf(X)$ modules $\perfxmod$ is considered as the first approximation of Rozansky--Witten theory of $T^\ast X$.  By taking the singular support of a module, we obtain a conical subset in $T^*X$, and the modules with Lagrangian singular support corresponds to sheaves of categories over conical Lagrangians.  We denote the full subcategory of objects with Lagrangian singular support as $\xwcmod$.  The superscript ``wc'' denotes weakly constructible and it is expected that such a characterization exists analogous to the case in smooth manifolds \cite{KS}.  The notion of singular support of $\Perf(X)$-modules and their properties are due to the work of Stefanich et al \cite{stefanich}.  This definition of singular support is analogous to the singular support of ind-coherent sheaves of \cite{AG} and support for objects in compactly generated triangulated categories of \cite{BIK}.

Given a $\Perf(X)$ module $M$, the endomorphisms of $M$ is acted on by the center of $\Perf(X)$:
$$Z(\Perf(X)) = \Hom_{\Perf(X)\otimes\Perf(X)^{\mathrm{op}}}(\Perf(X),\Perf(X)) = \Coh(\mathcal LX),$$
where $\mathcal LX = X\times_{X\times X} X$ is the loop space of $X$, the self intersection of $X$ in $X\times X$ under the diagonal map.  The structure sheaf of the loop space $\mathcal LX$ is then
$$\mathcal O_{\mathcal LX} = \mathcal O_X\otimes_{\mathcal O_{X\times X}}\mathcal O_X$$
which by the Hochschild--Kostant--Rosenberg theorem \cite{Ben_Zvi_2012} we have
$$\mathcal O_{\mathcal LX} = \mathrm{Sym}_{\mathcal O_X}\Omega_{X}[1].$$
Now the monoidal product of $\Coh(\mathcal LX)$ is given by convolution and its unit is $\mathcal O_X$.  So $\mathcal O_X$ acts on $M$, and so informally by Koszul duality,
$$\Ext^*(\mathcal O_X, \mathcal O_X) = \mathcal O_{T^*X[2]}$$
acts on $M$, so treating $M$ as a sheaf over $T^*X[2]$, we can take its support which is our singular support \cite{stefanich}\cite{nolan}.  Here we mention Koszul duality, remember that for a locally complete intersection closed embedding $i: Z\rightarrow X$, we have a free resolution of $\mathcal O_Z$ over $\mathrm{Sym}_{\mathcal O_Z}\Omega_{Z/X}[1]$ given by
\begin{align*}
\cdots&\rightarrow\mathrm{Sym}_{\mathcal O_Z}^j\Omega_{Z/X}[j]\otimes\mathrm{Sym}_{\mathcal O_Z}\Omega_{Z/X}[1]\rightarrow\cdots\rightarrow\mathrm{Sym}_{\mathcal O_Z}^2\Omega_{Z/X}[2]\otimes\mathrm{Sym}_{\mathcal O_Z}\Omega_{Z/X}[1] \\
&\rightarrow\mathrm{Sym}_{\mathcal O_Z}^1\Omega_{Z/X}[1]\otimes\mathrm{Sym}_{\mathcal O_Z}\Omega_{Z/X}[1]\rightarrow\mathrm{Sym}_{\mathcal O_Z}\Omega_{Z/X}[1]\rightarrow \mathcal O_Z\rightarrow 0.
\end{align*}
This is a free resolution since $Z$ is a locally complete intersection, hence $\Omega_{Z/X}$ is locally free.  A local computation that confirms this is indeed a resolution is given in \cite{gw}.  If we specify $i$ to be the diagonal morphism of $X$, then we have a resolution of $\mathcal O_X$ over $\mathcal O_{\mathcal LX}$.  Notice that the $j$-th term of the resolution $P^j$ has degree $\leq -j$ and it is generated by the degree $-j$ component $P_{-j}^j$.  So $\Hom^i(P^j, \mathcal O_Z)=0$ for $i<j$ \cite[\nopp 2.1.2]{bgs}, as $\mathcal O_Z$ is of degree 0.  We also have $\Hom^i(P^j, \mathcal O_Z)=0$ for $i>j$ since $P_{-j}^j$ is mapped to $0$ and that $P^j$ is generated by the degree $-j$ component.  So we see that there are only degree $j$ maps from $P^j$ to $\mathcal O_Z$, hence
$$\mathrm{Ext}_{\mathrm{Sym}_{\mathcal O_Z}\Omega_{Z/X}[1]}^*(\mathcal O_Z, \mathcal O_Z) = \mathrm{Sym}_{\mathcal O_Z}\Omega_{Z/X}^\vee[-2] = \mathcal O_{T_Z^*X[2]}.$$
Specializing to the diagonal morphism we have $T_Z^*X = T^*X$.
The formal definition of singular support is as follows:

\begin{defn}
\cite{stefanich} Given $M$ in $\perfxmod$, the endomorphisms of $M$ $\End(M)$ is a module of the center of $\Perf(X)$, which is $\Coh(\mathcal{L}X)$.  The unit of the center acts as the unit of $\End(M)$, so an endomorphism of the unit of the center maps to an endomorphism of the unit of $\End(M)$.  Taking cohomology, we have a map
$$\Ext^\ast(\unit_{\Coh(\mathcal{L}X)}) \rightarrow \Ext^\ast(\unit_{\End(M)}).$$
Now
$$\Ext^\ast(\unit_{\Coh(\mathcal{L}X)}) = \Ext_{\Sym_{\mathcal O_X}\Omega_X[1]}^* (\mathcal O_X, \mathcal O_X) = \Sym_{\mathcal O_X} \Omega_X^\vee[-2] = \mathcal O_{T^*X[2]},$$
by Koszul duality, where the unit of $\Coh(\mathcal LX)$ is $\mathcal O_X$ the zero section given by the diagonal.  Taking the support of $\Ext^\ast(\unit_{\End(M)})$ as a $\mathcal O_{T^*X[2]}$ module would be the singular support of $M$ denoted by $SS(M)$.  $SS(M)$ is a conic subset of $T^*X[2]$ because the fibers are graded.
\end{defn}

This is equivalent in saying that $\End(M)$ is a $\Coh(\mathcal LX)$ module, where $\Coh(\mathcal LX)$ is $E_2$ monoidal, and hence in the $E_1$ sense, by Koszul duality, a $\Perf(T^*X[2])$ module, where $\Perf(T^*X[2])$ is $E_\infty$ monoidal \cite{stefanich}.  They are however not in the $E_2$ sense equivalent, as $\Coh(\mathcal LX)$ would be equivalent to a deformation of $\Perf(T^*X[2])$, which we denote by $\Perf_{\hbar}(T^*X[2])$.  We expect (but could not find a reference for) this $\Perf_\hbar(T^*X[2])$ to come from the $P_3$-algebra structure given by the 2-shifted symplectic structure of $T^*X[2]$, which by Kontsevich formality \cite{Kontsevich_1999}\cite{LV} is equivalent to an $E_3$-algebra structure.  This means that we only consider the assignment of a category to an open set without considering the monoidal product, i.e. $\End(M)$ is a topological sheaf of categories over $T^*X[2]$ \cite{stefanich}.  The support of this topological sheaf is precisely the singular support of $M$.

\begin{rmk}
\cite{stefanich} $\QCoh(\mathcal LX)$ is not equivalent to $\QCoh(T^*X[2])$ but rather a full subcategory of it, those that have support at the zero section \cite{AG}, $\QCoh(T^*X[2])$ is instead $\IndCoh(\mathcal LX)$ and they are different because $\mathcal LX$ is singular.  This is why we work with $\Perf(X)$ modules rather than $\QCoh(X)$ modules.
\end{rmk}

We will now calculate the singular support of $\Perf(Z)$ where $Z$ is a smooth subscheme of $X$.  This module plays a similar role of the constant sheaf $k_N$ with stalks $k$ on a closed submanifold $N \subset M$ in the analogous case of sheaves on manifolds.  We see that our definition of singular support does satisfy our intuition of measuring the codirections where changes happen \cite{KS}\cite{101025}, in this case the conormal of $Z$ in $X$ \cite{stefanich}.

\begin{thm}
Suppose we have $i: Z\rightarrow X$ a closed immersion and both $Z$ and $X$ smooth.  Then $\Perf(Z)$ is a $\Perf(X)$ module by $i^*$.  The singular support $SS(\Perf(Z))$ is the conormal $T_Z^*X$.
\end{thm}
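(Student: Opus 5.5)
We compute $\End(\Perf(Z))$ as a $\Coh(\mathcal LX)$-module and read off the support of the Ext-algebra of its unit.

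\textbf{Step 1: endomorphisms as kernels.} By Theorem 2.21(1) and its $\Perf$-analogue of Theorem 2.20(2), $\Perf(X)$-linear endofunctors of $\Perf(Z)$ are given by kernels on $Z\times_X Z$. Since $Z\to X$ is proper and $Z$ is smooth, Theorem 2.21(2) with $S=X$ gives
$$\End_{\Perf(X)}(\Perf(Z))\simeq \Coh(Z\times_X Z).$$
Under this equivalence composition is convolution, and the unit is the pushforward $\Delta_*\mathcal O_Z$ along the diagonal $\Delta: Z\to Z\times_X Z$. The center $\Coh(\mathcal LX)$ acts through the natural map $Z\times_X Z\to X\times_{X\times X}X=\mathcal LX$. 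On units this is the map $\mathcal O_X\mapsto \Delta_*\mathcal O_Z$, which is the pullback of the diagonal class.

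\textbf{Step 2: compute the Ext algebra of the unit.} The derived self-intersection $Z\times_X Z$ is, locally (and globally via HKR for lci embeddings), $\Spec_Z\Sym_{\mathcal O_Z}(N^\vee_{Z/X}[1])$, where $N^\vee_{Z/X}$ is the conormal bundle. The Koszul resolution recalled before Definition 5.1.1 then gives
$$\Ext^*_{Z\times_X Z}(\mathcal O_Z,\mathcal O_Z)=\Sym_{\mathcal O_Z} N_{Z/X}[-2]=\mathcal O_{T^*_ZX[2]}.$$

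\textbf{Step 3: identify the module structure.} The map
$$\mathcal O_{T^*X[2]}=\Sym_{\mathcal O_X}T_X[-2]\to \Sym_{\mathcal O_Z}N_{Z/X}[-2]$$
must be shown to be restriction of functions along the closed embedding $T^*_ZX\hookrightarrow T^*X$. In degree zero it is $\mathcal O_X\to\mathcal O_Z$. In degree two it is $T_X|_Z\to N_{Z/X}$, dual to $N^\vee_{Z/X}\to\Omega_X|_Z$. To establish this, use functoriality of the HKR/Koszul identification under the map of derived schemes $Z\times_X Z\to \mathcal LX$: the induced map of loop-type algebras is
$$\Sym(\Omega_X|_Z[1])\to\Sym(N^\vee_{Z/X}[1]),$$
coming from the surjection $\Omega_X|_Z\to\Omega_Z$, whose kernel is $N^\vee$. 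More precisely, $Z\times_XZ\to Z\times_{Z\times Z}\dots$ factors so that it is dual to the inclusion $N^\vee\hookrightarrow\Omega_X|_Z$. Koszul duality is functorial and turns this into the restriction $\Sym T_X|_Z\to \Sym N$. Hence the $\mathcal O_{T^*X[2]}$-module $\mathcal O_{T^*_ZX[2]}$ is the structure sheaf of the closed subscheme $T^*_ZX$, and its support is exactly $T^*_ZX$.

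\textbf{Main obstacle.} The hardest step is Step 3: making the compatibility of the $E_2$-central action with the Koszul identifications precise, rather than merely the graded algebras. First I would check it locally, reducing to $X=\A^n$, $Z=\A^m$ a coordinate subspace, where everything is an explicit Koszul complex and the map is visibly $k[x,\xi]\to k[x',\xi'']$. Support is local on $X$, since the statement is about a coherent module on $T^*X$, so the local computation suffices.
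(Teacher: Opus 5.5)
Your Steps 1 and 2 are the paper's proof. The paper identifies $\End_{\Perf(X)}(\Perf(Z))\simeq\Coh(Z\times_XZ)$, computes the Ext algebra of the unit $\mathcal O_Z$ as $\Sym_{\mathcal O_Z}N_{Z/X}[-2]=\mathcal O_{T^*_ZX[2]}$, and reads off the support. It never checks how $\mathcal O_{T^*X[2]}$ acts, so your Step 3 is extra care.

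One point in Step 2 needs correcting: your parenthetical ``globally via HKR for lci embeddings'' is false in general. By Arinkin--C\u ald\u araru, $\mathcal O_{Z\times_XZ}$ is formal only when $N_{Z/X}$ extends to the first-order neighborhood of $Z$. This is why the paper uses only $H^*(\mathcal O_{Z\times_XZ})\cong\Sym_{\mathcal O_Z}N^\vee_{Z/X}[1]$ and the degeneration of the Eilenberg--Moore spectral sequence. For you it is harmless: $Z$ is locally cut out by a regular sequence, so the Koszul model is formal locally, and support is local.

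The mechanism you propose for Step 3, however, runs in the wrong direction. There is no natural map $Z\times_XZ\to\mathcal LX$ carrying the central action; the obvious candidates factor through the constant loops. There is also no natural map $\Omega_X|_Z\to N^\vee_{Z/X}$, so the algebra map $\Sym(\Omega_X|_Z[1])\to\Sym(N^\vee_{Z/X}[1])$ you invoke does not exist canonically.

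The center acts by composition of paths, $\mathcal LX\times_X(Z\times_XZ)\to Z\times_XZ$, as in the paper's remark after the proof. On the unit $\Delta_*\mathcal O_Z$ this is the correspondence $\mathcal LX\leftarrow\mathcal LX\times_XZ\xrightarrow{\pi}Z\times_XZ$: restrict to loops based in $Z$, then apply $\pi_*$. Locally, $\pi^\#:\Sym(N^\vee_{Z/X}[1])\to\Sym(\Omega_X|_Z[1])$ is induced by the inclusion $N^\vee_{Z/X}\hookrightarrow\Omega_X|_Z$. Since $\pi_*$ is restriction of scalars, the induced map on Ext of the augmentation modules is the contravariant Koszul dual $\Sym(T_X|_Z[-2])\to\Sym(N_{Z/X}[-2])$. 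Precomposed with the base change $\Sym(T_X[-2])\to\Sym(T_X|_Z[-2])$, this is exactly restriction of functions to $T^*_ZX$, which is the conclusion you wanted. Your local coordinate check will confirm it, provided you compute with this action rather than with a map $Z\times_XZ\to\mathcal LX$.
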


\begin{proof}
We have
$$\End_{\Perf(X)}(\Perf(Z)) = \Coh(Z\times_X Z),$$
and that the cohomology of the structure sheaf
$$H^*(\mathcal O_{Z\times_X Z}) = \Sym_{\mathcal O_Z}\Omega_{Z/X}[1] = \mathcal O_{N_ZX[-1]},$$
is the structure sheaf of the shifted normal $N_ZX[-1]$ by the work of Arinkin--C\u ald\u araru \cite{ARINKIN2012815}.  
By the calculation above Definition 5.1.1, we see that the $E_2$ page of the Ext Eilenberg--Moore spectral sequence \cite{derivedcats}\cite{bmr}:
$$E_2^{p,q} = \mathrm{Ext}_{H^*(A)}^{p,q}(H^*(M),H^*(N)) \implies \mathrm{Ext}_A^{p+q}(M,N)$$
degenerates on the $E_2$ page since $E_2^{p,q}=0$ except when $p=q$.  So we have
$$\Ext_{\mathcal O_{Z\times_X Z}}^\ast(\mathcal O_Z, \mathcal O_Z) = \Ext^*_{\Sym_{\mathcal O_Z}\Omega_{Z/X}[1]}(\mathcal O_Z, \mathcal O_Z) = \Sym_{\mathcal O_Z}\Omega_{Z/X}^\vee[-2] = \mathcal O_{T_Z^*X[2]},$$
where the second and last equality comes from the calculation above Definition 5.1.1 and $\mathcal O_{T_Z^*X[2]}$ is the structure sheaf of the shifted conormal $T_Z^*X[2]$.  And we see that the singular support $SS(\Perf(Z))$ is given by its conormal $T_Z^*X$.
\end{proof}

If in addition to $i$ being a closed immersion, the normal bundle of $Z$ extends to a vector bundle on the first infinitesimal neighborhood of $Z$ in $X$, then $\mathcal O_{Z\times_X Z}$ is formal, and that the self intersection is $\Spec_Z(\Sym_{\mathcal O_Z}\Omega_{Z/X}[1])$ the shifted normal \cite{ARINKIN2012815}.  The action is more explicit in this case, we view the shifted normal $N_ZX[-1]$ as $\mathcal L_ZX / \mathcal LZ$, where $\mathcal L_ZX$ is the loops of $X$ based in $Z$ and $\mathcal LZ$ is the loops of $Z$.  $\mathcal LX$ acts naturally on $\mathcal L_ZX / \mathcal LZ$ by the composition of loops:
$$\mathcal LX \times_X (\mathcal L_ZX / \mathcal LZ) \rightarrow \mathcal L_ZX / \mathcal LZ,$$
so we have
$$\Coh(\mathcal LX)\otimes \Coh(\mathcal L_ZX / \mathcal LZ) \rightarrow \Coh(\mathcal L_ZX / \mathcal LZ)$$
\sloppy
by convolution, which gives $\Coh(\mathcal L_Z X/\mathcal LZ)$ a $\Coh(\mathcal LX)$-module structure coming from the composition of loops.  Using Koszul duality, we again see that the singular support $SS(\Perf(Z))$ is given by the conormal $T^*_ZX$.

When we are working with sheaves on a smooth manifold $M$, then there is a functor $\muhom$ \cite{KS}, which is a sheaf on $T^*M$, and its direct image under the projection map is the standard $\Hom$ \cite{KS}.  Now suppose $M, N$ are $\Perf(X)$ modules, then the center of $\Perf(X)$, $\Coh(\mathcal LX)$ acts on $\Hom(M,N)$ as well.  Hence $\Hom(M,N)$ is a topological sheaf of categories over $T^*X[2]$.  So the microlocal hom $\muhom(M,N)$ in our theory is just $\Hom(M,N)$ \cite{stefanich}.  Now we prove some properties that are analogous to the case in smooth manifolds.

\begin{thm}
\sloppy
Suppose $\Ext^*(\mathrm{unit}_{\End(M)})$ and $\Ext^*(\mathrm{unit}_{\End(N)})$ are finitely generated $\mathcal O_{T^*X[2]}$ modules.  Then $\supp\Hom(M,N) \subseteq SS(M) \cap SS(N)$.

\end{thm}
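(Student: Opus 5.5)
The plan is to realize $\Hom(M,N)$ as a bimodule for the actions of $\End(M)$ and $\End(N)$, and to play the two induced actions of $\mathcal O_{T^*X[2]}$ off against each other. Since the center $\Coh(\mathcal LX)$ is $E_2$, its action on $\Hom(M,N)$ can be computed either by precomposition, through the map $\Coh(\mathcal LX)\to\End(M)$, or by postcomposition, through $\Coh(\mathcal LX)\to\End(N)$. These two actions agree up to coherent homotopy, because both come from the $\Perf(X)$-linear structure of $M$ and $N$.

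Passing to Ext of the units, I get a single map
\[
\mathcal O_{T^*X[2]}=\Ext^*(\unit_{\Coh(\mathcal LX)})\longrightarrow \Ext^*(\unit_{\Hom(M,N)}),
\]
or more precisely an action on the Ext-groups of every object of $\Hom(M,N)$. This map factors through $\Ext^*(\unit_{\End(M)})$, acting by precomposition, and also through $\Ext^*(\unit_{\End(N)})$, acting by postcomposition.

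Here is the key step. Let $I_M$ and $I_N$ be the annihilators of $\Ext^*(\unit_{\End(M)})$ and $\Ext^*(\unit_{\End(N)})$ in $\mathcal O_{T^*X[2]}$. By the finite generation hypothesis, $SS(M)=V(I_M)$ and $SS(N)=V(I_N)$, since for finitely generated modules the support is the closed set cut out by the annihilator. Because $\Ext^*(\unit_{\End(M)})$ is a unital algebra and $\mathcal O_{T^*X[2]}$ acts through the unit, any $a\in I_M$ satisfies $a\cdot 1=0$. So $a$ maps to zero in $\Ext^*(\unit_{\End(M)})$, and therefore $a$ acts by zero on every $\Hom$-space in $\Hom(M,N)$. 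The same argument applies to $I_N$. Hence $I_M+I_N$ annihilates the action on $\Hom(M,N)$, and $\supp \Hom(M,N)\subseteq V(I_M+I_N)=V(I_M)\cap V(I_N)=SS(M)\cap SS(N)$.

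The support of $\Hom(M,N)$, viewed as a topological sheaf of categories over $T^*X[2]$, should be defined in the same way as in Definition 5.1.1: as the support of $\Ext^*(\unit)$ or, more generally, as the union of the supports of the Ext-modules of its objects. In either reading, annihilation by $I_M+I_N$ gives the containment. If $\supp$ is instead defined by localization, as the points $p$ where $\Hom(M,N)_p\neq 0$, I argue as follows. For $p\notin SS(M)$, some $a\in I_M$ is invertible at $p$ while also acting by zero, so the localization at $p$ vanishes; the case $p\notin SS(N)$ is symmetric.

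I expect the main obstacle to be the first step: justifying that the pre- and postcomposition actions of $\Coh(\mathcal LX)$ on $\Hom(M,N)$ coincide, so that a single $\mathcal O_{T^*X[2]}$-action factors through both units. I would handle this by writing $\Hom_{\Perf(X)}(M,N)$ as a module over $\End(M)^{\mathrm{op}}\otimes_{Z}\End(N)$, where $Z=\Coh(\mathcal LX)$ and the two central maps are identified by $\Perf(X)$-linearity, and then taking Ext of units in this relative tensor product. The finite generation hypothesis is used only to identify supports with vanishing loci of annihilators.
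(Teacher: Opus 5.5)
Your proposal is correct and follows the paper's route. The paper's proof is exactly the observation that $\Hom(M,N)$ is acted on by precomposition with $\End(M)$ and by postcomposition with $\End(N)$, so whatever annihilates the unit of either endomorphism category also annihilates $\Hom(M,N)$. You make explicit two points the paper leaves tacit: that the two actions of $\Coh(\mathcal LX)$ on $\Hom(M,N)$ agree by $\Perf(X)$-linearity, and that $SS(M)$ and $SS(N)$ are the vanishing loci of the annihilators $I_M$ and $I_N$.
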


\begin{proof}
Any morphism from $M$ to $N$ can be considered as a composition of endomorphism of $M$ and $M$ to $N$ or $M$ to $N$ and an endomorphism of $N$.  So annihilating $\End(M)$ will annihilate $\Hom(M,N)$ and annihilating $\End(N)$ will annihilate $\Hom(M,N)$.

\end{proof}

\begin{thm}
\sloppy
Suppose $\mathrm{colim}_i M_i$ is a finite colimit in $\perfxmod$ and that $\Ext^*(\mathrm{unit}_{\End(M_i)})$ are finitely generated.  Then $\displaystyle SS(\mathrm{colim}_i M_i) \subseteq \bigcup_i SS(M_i)$.
\end{thm}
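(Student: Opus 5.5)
We prove the contrapositive pointwise. Fix a point $p \in T^*X[2]$ with $p \notin \bigcup_i SS(M_i)$; we show $p \notin SS(\mathrm{colim}_i M_i)$. Write $M = \mathrm{colim}_i M_i$. Since the colimit is finite, we first reduce to two basic cases: finite coproducts (which in a stable setting are also products) and pushouts. More precisely, $\perfxmod$ is not itself stable, so we do not argue with cofiber sequences. Instead we use that a finite colimit is an iterated pushout of finite coproducts, and handle each by an induction on the size of the diagram. The key input is the principle already used in the proof of Theorem 5.1.4. The $\mathcal O_{T^*X[2]}$-action on $\Ext^*(\unit_{\End(M)})$ is compatible with all $\Perf(X)$-linear functors. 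Hence any central element annihilating the identities of the $M_i$ acts compatibly on everything built functorially from them.

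Concretely, I would work with the description $\End(M) = \lim_i \Hom(M_i, M)$, using that $\Hom(\mathrm{colim}_i M_i, -) = \lim_i \Hom(M_i,-)$. The unit of $\End(M)$ maps to the compatible family of structure maps $\iota_i : M_i \to M$. The action of $\Coh(\mathcal LX)$, and hence of $\mathcal O_{T^*X[2]}$ on Ext of units, commutes with the limit. It also factors through the action on each $\Hom(M_i,M)$, and by the argument of Theorem 5.1.4 this action factors through $\Ext^*(\unit_{\End(M_i)})$. By finite generation, localizing at $p$ kills $\Ext^*(\unit_{\End(M_i)})_p$. Then there is $a_i \notin \mathfrak m_p$, homogeneous of positive degree (using that the supports are conic), with $a_i \cdot \id_{M_i} = 0$. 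So $a_i$ annihilates $\Hom(M_i, M)$ on the level of cohomology. Set $a = \prod_i a_i$, which is a finite product and still nonzero at $p$. Then $a$ annihilates the cohomology of every term of the finite limit $\lim_i \Hom(M_i,M)$.

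The main obstacle is passing from "annihilates cohomology of each term" to "acts nilpotently on $\Ext^*(\unit_{\End(M)})$". A finite limit of mapping spectra only gives a long exact sequence, or a finite spectral sequence. An element killing each term therefore kills the limit only up to a power, bounded by the length of the diagram (e.g. $a^2$ for a pullback, via the Mayer--Vietoris sequence). I would handle this by induction on the number of pushout/coproduct steps. At each step, $a^{N}$ annihilates $\Ext^*(\unit_{\End(M)})$ for some $N$ depending only on the diagram, so $a^N\cdot \unit = 0$. Nilpotence suffices, since $a^N \notin \mathfrak m_p$ still. Hence $\Ext^*(\unit_{\End(M)})_p = 0$, i.e.\ $p \notin SS(M)$. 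A subtlety to check along the way is that the $\Ext^*$ of the unit is an algebra, so annihilating the unit annihilates the whole module. This means support can be tested on the unit alone, which is what makes the finite generation hypothesis sufficient to pick the $a_i$.
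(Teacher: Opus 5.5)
Your argument is correct in outline, but it takes a different route from the paper.

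\textbf{Common starting point.} Both proofs begin from $\End(\mathrm{colim}_i M_i)=\lim_i\Hom(M_i,\mathrm{colim}_i M_i)$. Both also use the observation, from the theorem on $\supp\Hom(M,N)$, that anything killing $\End(M_i)$ kills $\Hom(M_i,-)$.

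\textbf{The paper's route.} It stays at the level of categories. It treats $\lim_i N_i$, with $N_i=\Hom(M_i,M)$, as an $E_1$-module over $\Perf(T^*X[2])$, and takes an open $U$ disjoint from $\bigcup_i SS(M_i)$. It then shows $\Perf(U)\otimes_{\Perf(T^*X[2])}\lim_i N_i=0$ by passing to $\mathrm{Ind}$. Two facts do the work: $\QCoh(U)\otimes_{\QCoh(T^*X[2])}(-)$ preserves finite limits, and $\mathrm{Ind}(\lim_i N_i)\to\lim_i\mathrm{Ind}(N_i)$ is an inclusion for finite limits. Localization is handled wholesale, so no nilpotence bookkeeping arises.

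\textbf{Your route.} You work pointwise with the graded ring $\Ext^*(\id_M)$ from the definition. This rests on a fact you should state explicitly: the endomorphism spectrum of $\id_M\leftrightarrow(\iota_i)_i$ in the limit category is the finite limit $\lim_i\mathrm{End}(\iota_i)$ of spectra. Your element acts on this diagram by a natural transformation that is null on each term. The long exact sequences then give nilpotence; equivalently, exactness of localization at $\mathfrak m_p$ kills $\pi_*$ of the limit directly.

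\textbf{What each buys.} Your argument is more elementary and closer to the literal definition of $SS$. It avoids identifying $SS(M)$ with the support of the categorical sheaf $\End(M)$, and it avoids the $\mathrm{Ind}$-versus-limits subtlety flagged in the paper's footnote. It also does not need finite generation. Since $\Ext^*(\id_{M_i})$ is a unital algebra over $\mathcal O_{T^*X[2]}$, it vanishes at $p$ exactly when some $s\notin\mathfrak m_p$ kills $1$. So your closing remark misattributes the choice of $a_i$ to finite generation; finite generation matters in the paper's open-set formulation, where one wants the supports closed. In exchange, the paper's route gives the stronger categorical vanishing of $\End(\mathrm{colim}_i M_i)$ over $U$. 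That is the form of localization used later when computing Homs in $\mathcal C^{pre}(X)$.

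\textbf{One small correction.} You cannot always take $a_i$ of positive degree: if $p$ lies on the zero section, every positive-degree homogeneous element vanishes at $p$. Degree-zero elements serve equally well, so simply drop that requirement.
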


\begin{proof}
First we have $\End(\mathrm{colim}_i M_i) = \mathrm{lim}_i \Hom(M_i, \mathrm{colim}_i M_i)$ as $\Perf(T^*X[2])$-modules in the $E_1$ sense, hence a topological sheaf of categories over $T^*X[2]$.  The support of this sheaf is precisely the singular support of $\mathrm{colim}_i M_i$.  Now suppose we have an open $U$ that is outside of $\displaystyle \bigcup_i SS(M_i)$.

For simplicity sake, let us denote $\Hom(M_i, \mathrm{colim}_i M_i)$ by $N_i$.  We have
$$\Perf(U)\otimes_{\Perf(T^*X[2])} \mathrm{lim}_i N_i = (\QCoh(U)\otimes_{\QCoh(T^*X[2])} \mathrm{Ind}(\mathrm{lim}_i N_i))^\omega$$
\sloppy
where $(-)^\omega$ denotes taking compact objects.  It suffices to show that the above is zero.  Now $\QCoh(U)$ is dualizable as a $\QCoh(T^*X[2])$-module in $\mathcal Pr^L$\cite{bzfn2010}, so the functor $\QCoh(U)\otimes_{\QCoh(T^*X[2])} (-) $ preserves finite limits and thus also preserves inclusions.  Even though the functor $\mathrm{Ind}: \mathrm{Cat}_\infty^{\mathrm{perf}}\rightarrow \mathcal Pr^L$ does not preserve limits, we still have an inclusion for finite limits
$$\mathrm{Ind}(\mathrm{lim}_i N_i) \rightarrow \mathrm{lim}_i \mathrm{Ind}(N_i) \footnote{The author thanks German Stefanich for pointing out that $\QCoh(U)$ is not dualizable in $\mathcal Pr_{st, \omega}^L$, which is equivalent to $\mathrm{Cat}_\infty^{\mathrm{perf}}$ under $\mathrm{Ind}$, and for mentioning that we still have an inclusion $\mathrm{Ind}(\mathrm{lim}_i N_i) \rightarrow \mathrm{lim}_i \mathrm{Ind}(N_i)$ for finite limits that is sufficient to run the argument.}.$$  So we have an inclusion
\begin{align*}
\QCoh(U)\otimes_{\QCoh(T^*X[2])}\mathrm{Ind}(\mathrm{lim}_i N_i) &\rightarrow \QCoh(U)\otimes_{\QCoh(T^*X[2])} \mathrm{lim}_i \mathrm{Ind}(N_i)\\
&= \mathrm{lim}_i \QCoh(U)\otimes_{\QCoh(T^*X[2])} \mathrm{Ind}(N_i) \\
&= \mathrm{lim}_i \mathrm{Ind} (\Perf(U)\otimes_{\Perf(T^*X[2])} N_i) \\
&= \mathrm{lim}_i \mathrm{Ind}(0) \\
&= 0,
\end{align*}
where the second last line comes from $\mathrm{supp} N_i \subseteq SS(M_i)$ by (the proof of) Theorem 5.1.4.  So we have the left hand side $\QCoh(U)\otimes_{\QCoh(T^*X[2])}\mathrm{Ind}(\mathrm{lim}_i N_i) = 0$ and hence $\Perf(U)\otimes_{\Perf(T^*X[2])} \mathrm{lim}_i N_i = 0$.
\end{proof}

\subsection{Abelian Gauge Theories and Fourier Transform}
Now that we have a basic understanding of singular supports of $\Perf(X)$ modules, we can now go back to Statement 1.1.1 with $G = T$ abelian \cite{T}:

\begin{repstat}{teleman}
Pure topological gauge theory in 3 dimensions for a compact abelian Lie group $T$ is equivalent to the Rozansky-Witten theory for $T^*T_\CC^\vee$.
\end{repstat}

This statement relating the category of $T$-categories and the category of boundary conditions of $T^*T_\CC^\vee$ can be seen as a Fourier transform \cite{T}\cite{AD}.  We will go over this statement in the simplest case of $G = S^1$ but the argument for general $T$ is the same.  First we will need the definition of the Cartier dual of an affine commutative $k$-group scheme $G$.

\begin{defn}
\cite{C62}\cite{D73}\cite{AM} For an affine commutative $k$-group scheme $G$, the Cartier dual of $G$ is given by
$$G^\vee = \Hom(G, \mathbb G_m),$$
where the $\Hom$ is the internal $\Hom$.  Notice here we use the same notation $G^\vee$ for both the Langlands dual group of $G$ and the Cartier dual of $G$.  But since the Langlands dual group is only used in Statement 1.1.1 and from now on we only care about the abelian version of Statement 1.1.1, we will only mean the Cartier dual of $G$ after this.
\end{defn}

\begin{ex}
\cite{AD}\cite[Appendix A]{DP} The Cartier dual of $\ZZ$ is $\mathbb G_m$ and the Cartier dual of $\mathbb G_m$ is $\ZZ$.
\end{ex}

We will now state a Fourier transform theorem.

\begin{thm}
\cite{AM} Let $G$ be an affine commutative $k$-group scheme.  Then the Fourier--Mukai transform
$$\QCoh(G) \rightarrow \QCoh(BG^\vee)$$
induced by the pairing $G\times BG^\vee\rightarrow B\mathbb G_m$ is an equivalence.  Furthermore, this functor is symmetric monoidal taking convolution in $\QCoh(G)$ to the usual tensor product in $\QCoh(BG^\vee)$.
\end{thm}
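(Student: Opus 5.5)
The plan is to build the Fourier--Mukai functor explicitly and then check that it is an equivalence on a generating family, using affineness of $G$. Write $\mathcal P$ for the Poincaré line bundle on $G\times BG^\vee$, obtained by pulling back the universal line bundle on $B\mathbb G_m$ along the pairing $G\times BG^\vee\rightarrow B\mathbb G_m$. Concretely, a point of $G$ gives a character of $G^\vee$, hence a line bundle on $BG^\vee$. The functor is
$$\Phi(\mathcal F) = p_{2*}(p_1^*\mathcal F\otimes \mathcal P).$$
Since $G$ is affine, $\QCoh(G)=\mathcal O(G)\text{-mod}$. Since $G^\vee$ is affine with coordinate ring the Hopf algebra dual to $\mathcal O(G)$, $\QCoh(BG^\vee)$ is the category of comodules over $\mathcal O(G^\vee)$.

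The first step is to identify both sides with the same algebraic category. Cartier duality gives an identification of Hopf algebras: $\mathcal O(G^\vee)$-comodules are the same as modules over the (topological) dual $\mathcal O(G^\vee)^*$. This dual is the distribution algebra of $G^\vee$, and it is identified with $\mathcal O(G)$ as a commutative algebra. I would check on the examples $\ZZ\leftrightarrow\mathbb G_m$ of Example 5.2.2 that the comultiplication on $\mathcal O(G)$ corresponds to the multiplication on $\mathcal O(G^\vee)$. Under this dictionary, $\Phi$ sends the free module $\mathcal O(G)$ to the regular representation of $G^\vee$. More generally, it sends an $\mathcal O(G)$-module $M$ to $M$ with the $G^\vee$-coaction induced by $\mathcal P$.

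The second step is to prove that $\Phi$ is an equivalence. $\Phi$ is colimit preserving because $p_2$ is affine. It sends the compact generator $\mathcal O_G$ to $p_{2*}\mathcal P$, which is the regular representation $\mathcal O(G^\vee)$ viewed as a $G^\vee$-equivariant sheaf. One then computes
$$\End(p_{2*}\mathcal P)=\mathcal O(G)$$
via the dual Hopf algebra identification. The main obstacle is that $\QCoh(BG^\vee)$ need not be compactly generated by the regular representation: for non-reductive or non-finite-type $G^\vee$, comodules are not just modules over an algebra. I would therefore avoid Schwede--Shipley and instead argue at the level of abelian categories. The functor $M\mapsto M$ with coaction is an equivalence $\mathcal O(G)\text{-Mod}\simeq \mathrm{Comod}_{\mathcal O(G^\vee)}$; this is classical Cartier duality, valid because the pairing is perfect. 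I would then pass to derived/stable categories using that both sides are left-complete with t-structures whose hearts match, and that $\Phi$ is t-exact. This is where I expect subtleties, and I would restrict to the t-structure argument from \cite{AM}.

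The third step is monoidality. Convolution on $\QCoh(G)$ is $m_*(\mathcal F\boxtimes\mathcal G)$. The key input is the identity $(m\times\id)^*\mathcal P\simeq p_{13}^*\mathcal P\otimes p_{23}^*\mathcal P$, which is the bilinearity of the pairing. Combining it with base change and the projection formula (as in Theorem 2.3.1's convolution formalism) gives
$$\Phi(\mathcal F\star\mathcal G)\simeq\Phi(\mathcal F)\otimes\Phi(\mathcal G).$$
The unit $\mathcal O_e$ goes to $\mathcal P|_{e\times BG^\vee}=\mathcal O$. The higher coherences come from $\mathcal P$ being a biextension, i.e. the pairing being a map of $E_\infty$-group stacks.
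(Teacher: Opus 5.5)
The paper only cites this result, so your argument has to stand on its own. The monoidality step (bilinearity of $\mathcal P$, base change, projection formula, biextension for coherences) is a reasonable outline. The equivalence part has two problems, and the second is a genuine gap.

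\textbf{The setup is wrong.} $G^\vee$ is representable by an affine scheme only when $G$ is finite. For general affine $G$, $G^\vee=\Hom(G,\GG_m)$ is ind-finite: $G^\vee=\varinjlim_i \Spec C_i^*$, with $C_i$ running over the finite-dimensional subcoalgebras of $\mathcal O(G)$. The paper's own example gives $\GG_m^\vee=\ZZ$, which is not even quasi-compact, and $\GG_a^\vee=\widehat{\GG}_a$ is formal. Three of your claims fail as a result.
\begin{enumerate}
\item $\QCoh(BG^\vee)^\heartsuit$ is not comodules over a Hopf algebra $\mathcal O(G^\vee)$. Comodules over an infinite-dimensional coalgebra are only the rational modules over its dual.
\item $p_{2*}\mathcal P$ is not the coinduced regular representation. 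For $G=\GG_m$ it is $k[\ZZ]=\bigoplus_{\ZZ}k$, not $\prod_{\ZZ}k$.
\item Your diagnosis of the obstacle is backwards. Failure of compact generation by the regular representation happens for $BH$ with $H$ affine and non-reductive, i.e.\ on the other side of Cartier duality. Here representations of $G^\vee$ are exactly modules over the discrete algebra $\mathcal O(G)$. So $p_{2*}\mathcal P$ is precisely the object that should corepresent the fiber functor, and the generator argument you discarded is the natural one.
\end{enumerate}

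\textbf{The gap: hearts to stable categories.} Left-completeness of both sides, t-exactness of $\Phi$ and an equivalence on hearts do not imply that $\Phi$ is an equivalence. The forgetful functor $\mathrm{Mod}_{H\ZZ}\to\mathrm{Sp}$ has all three properties and is not fully faithful, because of Steenrod operations. What is missing is control of higher Ext groups in $\QCoh(BG^\vee)$: that $\mathrm{RHom}(\Phi\mathcal O_G,\Phi N)\simeq N$ and that $\Phi(\mathcal O_G)$ generates. Roughly, this says $\QCoh(BG^\vee)$ is the derived category of its heart, and it is exactly the content of the theorem. Deferring it to ``the t-structure argument'' leaves the equivalence unproved.

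One way to supply it:
\begin{enumerate}
\item $\QCoh(BG^\vee)$ is the totalization of $\QCoh((G^\vee)^{\bullet})$ over the \v{C}ech nerve of $\pi:\mathrm{pt}\to BG^\vee$.
\item Base change holds along this nerve, since the $C_i^*$ are finite-dimensional.
\item The descent form of Barr--Beck--Lurie then makes $\pi^*$ comonadic, with comonad
$$V\mapsto \Gamma(G^\vee,V\otimes\mathcal O)=\varprojlim_i \Hom_k(C_i,V)=\Hom_k(\mathcal O(G),V).$$
This is the coinduction comonad, whose coalgebras are $\mathcal O(G)$-modules.
\item Cartier duality (the group law on $G^\vee$ is dual to the multiplication of $\mathcal O(G)$) identifies the comonad structures.
\item Bilinearity of $\mathcal P$ shows that $\Phi$ realizes the resulting equivalence.
\end{enumerate}
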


Applying this result to $G = \mathbb G_m$ and $G^\vee = \ZZ$, we obtain a Fourier--Mukai equivalence of $\QCoh(\mathbb G_m)$ and $\QCoh(B\ZZ)$.  Now topologically we know that $B\ZZ \simeq S^1$, and so $\QCoh(B\ZZ)$ is the local systems on $S^1$ \cite{AD}.  The equivalence exchanges the convolution product of $\QCoh(\mathbb G_m)$ with the usual tensor product of $\QCoh(B\ZZ)$.  $\QCoh(\mathbb G_m)$ is just vector spaces along with an invertible operator, and the convolution product is just tensor product over $k$, with the operator now being the tensor of operators \cite{AD}.  Now $\QCoh(B\ZZ)$ the category of local systems on $S^1$ is given by the monodromy, i.e. a $\ZZ$ representation, and the tensor of operators in $\QCoh(\mathbb G_m)$ precisely corresponds to the tensor of representations \cite{AD}.

However, if we want topological $S^1$-categories, meaning that the $S^1$ action is continuous, we are looking at modules over $\QCoh(B\ZZ)$ with the convolution product not the usual tensor product \cite{AD}.  What we want instead is an equivalence that exchanges the usual tensor product in $\QCoh(\mathbb G_m)$ and the convolution product in $\QCoh(B\ZZ)$.  This is not proven in full generality with arbitrary $G$ and $BG^\vee$ \cite{AM}, but since we are only concerning $\mathbb G_m^n$ and $B\Lambda$ or rather $\mathbb G_m$ and $B\ZZ$, this is known for example in \cite{Fang_2011}, where the convolution in $\QCoh(B\ZZ)$ corresponds to the tensor product over $k[x, x^{-1}]$, i.e. identifying the operators on either entry.  What we have now instead is the symmetric monoidal equivalence of $\QCoh(\mathbb G_m)$ with the usual tensor product and $\QCoh(B\ZZ)$ with the convolution product.  So we have the equivalence of $\mathrm{Mod}_{(\QCoh(B\ZZ), \ast)}(\mathcal Pr^L_{st})$ and $\mathrm{Mod}_{(\QCoh(\mathbb G_m), \otimes)}(\mathcal Pr^L_{st})$ \cite{AD}.  Now the former are stable presentable categories with a topological $S^1$ action, as $\QCoh(B\ZZ)$ is the category of local systems on $S^1$, any contractible subset of $S^1$ trivializes the fibers, and hence also trivializes the action.  The latter is just the quasicoherent sheaves of categories on $\mathbb G_m$ since $\mathbb G_m$ is 1-affine \cite{AD}.

The idea is that sheaves of categories (or modules over categories of sheaves) should have a notion of singular support and hence these sheaves of categories have support on $T^*\mathbb G_m$.  So the picture outlined in Section 4.3 is more or less seen by the Fourier transform construction.  However, in light of Remark 5.1.2, we see that we should be considering $\Perf(\mathbb G_m)$ modules in $\mathrm{Cat}_\infty^{\mathrm{perf}}$.  We would want a Fourier transform for small categories that relates $\Coh(B\ZZ)$ with its convolution product with $\Perf(\mathbb G_m)$ with its usual tensor product.  However, this is not true, $\Coh(B\ZZ)$ is equivalent to a subcategory of $\Perf(\mathbb G_m)$ \cite[Appendix A]{DP}.  One can think of $\Coh(B\ZZ)$ as coherent local systems on $S^1$, and the monodromy can be decomposed into Jordan blocks, with finite number of eigenvalues, which corresponds to finite support in $\mathbb G_m$.  Now any modules over $\Perf(\mathbb G_m)$ will be a module over its subcategory so for these modules we can still have our notion of singular support on $T^*\mathbb G_m$.  However this means that our story is only a part of this story as the inclusion is only one way.

Now even if we discard any functional analytic issues about large and small categories, there is still one issue.  Namely the singular support in $T^*\mathbb G_m$ is conic, since the singular support lives in $T^*\mathbb G_m[2]$, with graded fibers.  This means that we do not have graphs as our singular support, and there will not be any Lagrangians that do not intersect the zero section yet intersect with the conormal at 1, which we did outline a construction on the other side in Section 4.3.  This shows that this Fourier transform picture is not complete and only captures part of the story.

Let us turn our eyes back to the classical picture of topological spaces and topological group actions.  Given a space with a continuous group action $G$, there are spaces that are homotopy equivalent but not $G$-equivariant homotopy equivalent.  A notable example would be a point and the contractible space $EG$.  The true homotopy theory, or rather genuine homotopy theory would be considering $G$-equivariant continuous maps between $G$-spaces and $G$-equivariant homotopies between such maps.  Elmendorf's theorem \cite{E} states that the genuine homotopy theory of spaces is equivalent to the homotopy theory of diagrams of fixed point spaces.  To state this theorem formally, we first go over the definition of an orbit category.  We follow the treatment in \cite{AD2}.

\begin{defn}
The orbit category $\mathcal O_G$ of a topological group $G$ is a category with objects being $G/H$ where $H$ is a closed subgroup of $G$ and morphisms being $G$-equivariant maps.
\end{defn}

Now given any $G$-space $X$, we have a presheaf on $\mathcal O_G$ that values in the category of topological spaces $\mathrm{Top}$, by taking the fixed point space of a closed subgroup $H$ $X^H$ on the point $G/H$:
$$X \mapsto (X^{(-)}: G/H \mapsto X^H).$$
We call this functor $F$.  The category of $\mathrm{Top}$-valued presheaves on $\mathcal O_G$ $\Pshv(\mathcal O_G, \mathrm{Top})$ has a projective model structure with pointwise weak equivalences and fibrations \cite{AD2}.  The functor $F$ above is a Quillen equivalence, with the left adjoint being the assignment to $G/e$, where $e$ is the identity of the group $G$ \cite{E}\cite{MS2}\cite{AD2}.

\begin{thm}
\cite{E}\cite[Chapter VI]{EHCT}\cite{RJP}\cite{MS2}\cite{AD2} The functor $F$ induces an equivalence of $(\infty,1)$-categories between $\mathrm{GTop}$ the category of $G$-spaces and $\Pshv(\mathcal O_G, \mathrm{Top})$, where the weak equivalences for both categories are specified by a family of subgroups of $G$.
\end{thm}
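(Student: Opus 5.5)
The plan is to exhibit the fixed-point functor $F\colon X \mapsto (G/H \mapsto X^H = \mathrm{Map}_G(G/H, X))$ as the right adjoint of a Quillen equivalence, and then pass to underlying $(\infty,1)$-categories. The Quillen structure is what will produce the equivalence of $(\infty,1)$-categories.

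\emph{Model structures.} On $\mathrm{GTop}$ I will use the model structure whose weak equivalences and fibrations are the maps $f$ such that $f^H$ is a weak equivalence (respectively Serre fibration) for every $H$ in the chosen family. This is the structure transferred from $\prod_H \mathrm{Top}$ along $X \mapsto (X^H)_H$. It is cofibrantly generated, with generating cofibrations $G/H \times S^{n-1} \to G/H \times D^n$ and generating acyclic cofibrations $G/H \times D^n \to G/H \times D^n \times [0,1]$. On $\Pshv(\mathcal O_G, \mathrm{Top})$, with $\mathcal O_G$ restricted to orbits from the family, I will use the projective structure. Its generators are $\Hom_{\mathcal O_G}(-, G/H) \times S^{n-1} \to \Hom_{\mathcal O_G}(-, G/H) \times D^n$ and the analogous acyclic maps. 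With these choices $F$ preserves and reflects weak equivalences and fibrations by construction, so it is a right Quillen functor once a left adjoint exists.

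\emph{Left adjoint.} I will take the coend $\Lambda(Y) = Y \otimes_{\mathcal O_G} (G/-)$, i.e. $\int^{G/H} Y(G/H) \times G/H$. Two properties follow from the Yoneda lemma. First, $\Lambda$ sends the representable $\Hom_{\mathcal O_G}(-, G/H) \times D^n$ to $G/H \times D^n$, so it carries generating (acyclic) cofibrations to generating (acyclic) cofibrations. Second, $\Lambda(Y)$ evaluated at the free orbit $G/e$ recovers $Y(G/e)$ with its induced $G$-action, which is the sense in which the paper describes the adjoint as "assignment to $G/e$".

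\emph{Equivalence.} Since $F$ reflects weak equivalences, it suffices to show that the derived unit $Y \to F\Lambda Y$ is a weak equivalence for every cofibrant $Y$. I will prove the stronger statement that it is an isomorphism, in three steps.
\begin{enumerate}
\item \emph{Representables.} For $Y = \Hom_{\mathcal O_G}(-, G/K) \times A$ we have $F\Lambda Y(G/H) = (G/K \times A)^H = \mathrm{Map}_G(G/H, G/K) \times A$, which is exactly $Y(G/H)$.
\item \emph{Cell attachments.} Cofibrant objects are retracts of cell complexes built from the generators. Passage to $H$-fixed points commutes with pushouts along the closed $G$-inclusions $G/K \times S^{n-1} \hookrightarrow G/K \times D^n$. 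It also commutes with transfinite composites of closed inclusions, by compactness arguments in compactly generated spaces. Hence $F\Lambda$ preserves these colimits.
\item \emph{Conclusion.} Transfinite induction over the cell structure then gives the isomorphism for all cofibrant $Y$, and retracts are handled formally.
\end{enumerate}

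The main obstacle is the point-set step (2): fixed points commute with the relevant colimits. This requires working in compactly generated weak Hausdorff spaces, checking that each attaching map is a closed inclusion, and checking that $(G/K)^H$ has the expected topology when $G$ is a topological group and $H, K$ are closed subgroups. It is also the only place where closedness of the subgroups matters. Once step (2) is in place, the Quillen equivalence induces an equivalence of the underlying $(\infty,1)$-categories obtained by inverting the specified weak equivalences, which is the statement of the theorem.
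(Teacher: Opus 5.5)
Your proposal is correct and matches what the paper indicates; the paper cites this result rather than proving it. In both, $F$ is the right adjoint of a Quillen equivalence whose left adjoint is evaluation at $G/e$, which is your coend $\Lambda(Y)=\int^{G/H}Y(G/H)\times G/H\cong Y(G/e)$ by co-Yoneda when $e$ lies in the family. Your argument that the unit is an isomorphism on cell complexes is the standard one from the cited sources. One small correction: fixed points commute with transfinite composites of closed inclusions because each $X_\alpha^H$ is closed in $X_\alpha$, not because of a compactness argument.
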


So learning from this classical Elmendorf's theorem, we would be tempted to think something similar might be happening in our case: that instead of considering $G$-categories, we should be considering diagrams of $G$-categories, perhaps diagrams of fixed point categories.  This statement is true and proven in \cite{GMR} with the assumption that $G$ is a finite simplicial group and the category of $G$ objects in a locally finitely presentable, cofibrantly generated simplicial category (see Assumptions 5.8 in \cite{GMR} to see the full list of assumptions).  This is also true if we consider ordinary categories (not $(\infty, 1)$-categories) with $G$ being a discrete group, proven in \cite{rubin2020elmendorfconstructionsgcategoriesgposets}.  The closest result that suits our purposes is the one in \cite{MS2}, in which Stephan proved an Elmendorf theorem for $G$ a compact Lie group and the category of $G$-objects in a cofibrantly generated topological model category, with the condition that the family of subgroups considered satisfies some cellularity conditions.

To see why these results are relevant to us, first of all we note that both simplicial categories, i.e. categories that are enriched over simplicial sets, and topological categories are models of $(\infty, 1)$-categories \cite{L}.  Secondly, we see that some assumptions are of no issue to us (or are close to that).  Model categories that are locally presented and cofibrantly generated are combinatorial model categories and are equivalent to presentable $(\infty, 1)$-categories \cite{Pavlov_2025}, and since we are working in the environment $\mathrm{Cat}_\infty^{\mathrm{perf}}$ which is presentable, we satisfy one of the assumption for \cite{MS2} (though not \cite{GMR} since the assumption requires locally finitely presentable).  So we have some idea on how to enlarge our story on the side of $G$-categories, namely by considering diagrams of fixed point categories.  On the other side, in the case of $G=S^1$, we would want to consider objects that have nonconic support in $T^*\mathbb G_m$, especially having supports that do intersect the conormal at 1 with no intersection with the zero section.  In the next section, we extend the theory and construct many objects with nonconic support in $T^*\mathbb G_m$.

\section{Sheaves of Categories on $X\times \A^1$: the Nonconic Story}
\subsection{Tamarkin's Trick}
We will now use the conic theory of Section 5 to model nonconic objects by using Tamarkin's trick \cite{10.1007/978-3-030-01588-6_3} (though this technique has appeared in the complex case earlier in \cite{8178551}) and consider sheaves of categories on $X \times \A^1$ instead of $X$.  An introduction to this technique in symplectic geometry can be found in \cite{sq}.  Suppose we are given an exact Lagrangian in $T^*X$ with $\lambda |_L = df$ for some function $f$ where $\lambda$ is the canonical 1-form.  Then we can construct a conic Lagrangian in $T^*(X\times \A^1)$ by scaling the cofibers of $(l, f(l), 1)$:
$$\mathrm{Cone}(L) = \{(x,f(x,\xi),c\cdot \xi, c\cdot 1)\in T^*(X\times\A^1): (x,\xi)\in L, c\in k\}.$$
There is an action on $T^*(X\times \A^1)$ by $\A^1$ given by translation of the $\A^1$ base coordinate, which corresponds to different choices of $f$, since the choice of $f$ is up to a constant.  This action is Hamiltonian, with moment map $\mu:T^*(X\times\A^1)\rightarrow (\A^1)^\vee$ given by
$$\mu(x,y,\xi,\tau) = \tau,$$
the projection to the $T^*\A^1$ cofiber.  The symplectic reduction $\mu^{-1}(1^\vee)/\A^1$ will give us back the exact Lagrangian in $T^*X$.  To micmic this construction, we define the category $\mathcal C^{pre}(X)$ as follows:

\begin{defn}
$$\mathcal C^{pre}(X) = \perfxamod / \{ M \in \perfxamod: SS(M) \subset \{\tau = 0\}\},$$
where $T^*(X\times \A^1)$ is given by $(x,y,\xi,\tau)$ with latin alphabets indicating base coordinates and greek alphabets indicating fiber coordinates.
\end{defn}

\begin{defn}
We define the map
$$\rho: T^*X \times T_{\tau\neq 0}^*\A^1 \rightarrow T^*X, \quad (x,\xi,y,\tau) \mapsto (x,\frac{\xi}{\tau}).$$
The microsupport of a module $M$ where $SS(M)$ is not contained in $\{\tau=0\}$ would then be $\musupp(M) = \rho(SS(M) \cap \{\tau\neq 0\})$.  Note that in our context microsupport $\musupp(M)$ refers to the nonconic support in $T^*X$ and singular support $SS(M)$ refers to the conic support in $T^*(X\times \A^1)$.
\end{defn}

Definitions 6.1.1 and 6.1.2 are similar to the case in symplectic geometry, except the condition $\tau > 0$ is replaced by $\tau \neq 0$ \cite{10.1007/978-3-030-01588-6_3}\cite{Ike_2019}\cite{sq}.

\begin{rmk}
The definition of $\mathcal C^{pre}(X)$ in Definition 6.1.1 corresponds to the construction of $\mu^{-1}(1^\vee)$ earlier.  Recall that our singular support $SS(M)$ belongs to $T^*(X\times\A^1)[2]$ instead of $T^*(X\times\A^1)$, so the fibers are graded and hence it is impossible for us to impose the condition $\tau = 1$.  Instead what we do is to require that $\tau \neq 0$.
\end{rmk}

What we are doing here is microlocalizing away from $\tau = 0$, which parallels the construction in \cite{benzvi2025potentcategoricalrepresentations}, where Ben-Zvi and Nadler work in large categories instead of small categories.  They first construct a periodic base
$$\mathcal A = 2\IndCoh(\GG_a) / 2\QCoh(\GG_a)$$
which is symmetric monoidal with convolution given by the group structure of $\GG_a$, where $2\IndCoh$ is the ind-coherent sheaves of categories.  Then they define a periodization
$$2\IndCoh^\pi(X) = 2\IndCoh(X)\otimes \mathcal A,$$
as an $\mathcal A$ module.  The authors explained that $\mathcal A$ is the microlocalization away from the zero section in $T^*\GG_a$, and $2\IndCoh^\pi(X)$ is the parallel microlocalization of $2\IndCoh(X \times \GG_a)$, which is exactly what we are doing here modulo the fact that $2\IndCoh$ is different from modules of $\IndCoh$ with most objects not being able to be recovered from global sections and the size difference of categories we are working with.

Here we first provide an example of a $\Perf(X\times\A^1)$ that is quotiented out:
\begin{ex}
(Non-example): Suppose we have a closed immersion $Z\rightarrow X$, with both $Z$ and $X$ smooth.  We consider $\Perf(Z\times \A^1)$ as a $\Perf(X\times \A^1)$ module.  Then by Theorem 5.1.3, $SS(\Perf(Z\times\A^1))$ is $T_{Z\times\A^1}^*(X\times\A^1)$, and we see that the conormal directions are entirely in the cofiber directions of $T^*X$ and hence entirely lie in $\{\tau=0\}$.
\end{ex}

Now we will see that we recover Theorem 5.1.3 in our construction:
\begin{ex}
Again we are given $Z\rightarrow X$ a closed immersion, with both $Z$ and $X$ smooth.  We consider the module $\Perf(Z)\boxtimes \Perf(0)$.
$$\ind_{p_1}\Perf(Z) = \Perf(X\times\A^1)\otimes_{\Perf(X)}\Perf(Z)=\Perf((X\times\A^1)\times_X Z) = \Perf(Z\times \A^1)$$
and
$$\ind_{p_2}\Perf(0) = \Perf(X\times\A^1)\otimes_{\Perf(\A^1)}\Perf(0)=\Perf((X\times\A^1)\times_{\A^1}0) = \Perf(X\times 0).$$
So 
\begin{align*}
\Perf(Z)\boxtimes\Perf(0) &= \Perf(Z\times \A^1)\otimes \Perf(X\times 0)) \\
&= \Perf((Z\times\A^1) \times_{X\times\A^1} (X\times 0)) \\
&= \Perf(Z\times0).
\end{align*}
The singular support of $\Perf(Z)\boxtimes\Perf(0)$ is then the conormal of $Z\times0$ in $X\times\A^1$ which gives us $\musupp$ the conormal $T^\ast_Z X$.

\end{ex}

We will now go over some examples that will give nonconic microsupport:
\begin{ex}
Suppose $X=\A^n$.  Consider $Z=\{y+f(x)=0\} \subset \A^{n+1}$, with $f$ a polynomial.  Then the singular support of $\Perf(Z)$ is the conormal
$$\{(x_i,-f(x),c\frac{\partial f}{\partial x_i},c): x_i,c\in\A^1\}\subset T^*(\A^{n+1}).$$
So $\musupp\Perf(Z)$ is $\Gamma_{df}$, the graph of $df$.
\end{ex}

\begin{ex}
Now suppose $X=\GG_m$.  And similarly consider $Z=\{y+f(x)=0\} \subset \GG_m \times \A$, with $f$ a polynomial.  Then the singular support of $\Perf(Z)$ is the conormal
$$\{(x,-f(x),c\frac{df}{dx},c): x\in\GG_m, c\in\A\}\subset T^*(\GG_m\times\A^1).$$
So $\musupp\Perf(Z)$ is $\Gamma_{df}$.  Specializing $\displaystyle f = \frac{x^{n+1}}{n+1}$ gives $x^ndz$ and $\displaystyle f = -\frac{1}{nx^n}$ gives $\displaystyle \frac{dx}{x^{n+1}}$.  In particular, we have constructed Lagrangians that do not intersect with the zero section, but intersects with the conormal at 1, which under Statement 1.1.1, corresponds to trivial categories that have nontrivial fixed point categories, which means we have trivial categories with nontrivial $S^1$ action.
\end{ex}

\begin{rmk}
Notice that the differential dx/x is not exact, so one cannot obtain it this way.  If we are working algebraically, we would not be able to obtain it locally as well, since kahler differentials are not locally exact.
\end{rmk}

\begin{ex}
Now suppose $N = \{f_1 = \cdots = f_m = 0\}$ a smooth subvariety of $\A^n$.  Then the conormal of $N$
$$T_N^*\A^n = \{(x,\xi): f_1(x) = \cdots = f_m(x) = 0, \xi = \xi_1(df_1)_x + \cdots + \xi_m(df_m)_x\}.$$
Suppose $\alpha \in \Omega^1(N)$.  Then
$$\alpha + T_N^*\A^n = \{(x,\xi): f_1(x) = \cdots = f_m(x) = 0, \xi = \xi_1(df_1)_x + \cdots + \xi_m(df_m)_x + \alpha_x\}.$$
When $\alpha$ is closed, this is a Lagrangian.  When $\alpha$ is exact, the Lagrangian is exact \cite{ns}.  So suppose $g \in k[x_1,\dots,x_n]/(f_1,\dots,f_m)$ a function on $N$.  Then
$$L = dg + T_N^*\A^n = \{(x,\xi): f_1(x) = \cdots = f_m(x) = 0, \xi = \xi_1(df_1)_x + \cdots + \xi_m(df_m)_x + (dg)_x\},$$
is an exact Lagrangian, where ${\lambda|}_L = dg$.  Here we also use $g$ to denote the map from $L$ to $k$, given by $(x,\xi) \mapsto g(x)$.  The choice of lift of $g$ in $k[x_1,\dots, x_n]$ does not matter, since the $df_i$ terms is included in the previous terms.  We now consider $Z = \{y+g(x) = f_1(x) = \cdots = f_m(x) = 0\}$ which is smooth in $\A^{n+1}$.  The singular support of $\Perf(Z)$ is the conormal
$$\{(x,-g(x),c\xi_1df_1+\cdots+c\xi_mdf_m+cdg,cdy):c,\xi_i\in\A^1, f_1(x) = \cdots = f_m(x) = 0\}.$$
The $\musupp$ of $\Perf(Z)$ will then be $L$.  This construction also works in $\GG_m^n$ where the $g$ and $f_i$'s are Laurent polynomials.
\end{ex}

All of these can be generalized to where $X$ is smooth scheme.  We define objects
$$C_f = \Perf(y+f=0)$$
where $f: X\rightarrow \A^1$ as in Example 6.6, and so $SS(C_f)$ would then be the conormal of $\{y+f=0\}$:
$$SS(C_f) = \{(x,-f(x),c\cdot df, c)\},$$
and hence the microsupport $\musupp(C_f) = \Gamma_{df}$, the graph of $df$.

To model Rozansky--Witten theory, the Hom of such objects are expected to be the matrix factorizations, with support at the intersection of the graphs of $df$ and $dg$ \cite{KR}. 

\begin{thm}
$$\Hom_{\mathcal C^{pre}(X)}(C_f,C_g) = \Coh(X_0)/\Perf(X_0),$$
where $X_0 = X \times_{\A^1} 0$, the fiber at $0$.
\end{thm}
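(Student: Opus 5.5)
The plan is to compute the $\Hom$ first in $\perfxamod$ via integral kernels, and then to identify the effect of the quotient by $\{M : SS(M)\subset\{\tau=0\}\}$ as inverting the degree-$2$ class $\tau$ acting on that $\Hom$. Preygel's description of $\mathrm{DSing}$ recalled in Section 3.1 then finishes the argument.

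\textbf{Step 1: the $\Hom$ before quotienting.} Write $Z_f=\{y+f=0\}\subset X\times\A^1$ and $Z_g=\{y+g=0\}$. Both are graphs, hence isomorphic to $X$ and smooth. Their inclusions into $X\times\A^1$ are closed immersions, so they are proper over $S=X\times\A^1$, and $S$ is a perfect stack. Theorem 2.22(2), using $\Coh(Z_g)=\Perf(Z_g)$ since $Z_g$ is regular, gives
$$\Hom_{\perfxamod}(C_f,C_g)=\mathrm{Fun}^{ex}_{\Perf(X\times\A^1)}(\Perf(Z_f),\Perf(Z_g))\simeq\Coh(Z_f\times_{X\times\A^1}Z_g).$$
The derived intersection imposes $y=-f$ and $y=-g$. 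Eliminating $y$ identifies it with the derived fiber $X\times_{\A^1}0$ of $g-f$, which is the $X_0$ of the statement. So the unlocalized $\Hom$ is $\Coh(X_0)$. Composition corresponds to convolution, as in Theorem 2.20/2.22.

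\textbf{Step 2: the quotient as localization.} The center $\Coh(\mathcal L(X\times\A^1))$ acts on this $\Hom$ category, making it an $E_1$-module over $\Perf(T^*(X\times\A^1)[2])$, as in the discussion after Theorem 5.1.3. I would argue that morphisms in $\mathcal C^{pre}(X)$ between $C_f$ and $C_g$ are computed by restricting $\Hom(C_f,C_g)$ to the open set $U=\{\tau\neq0\}$, that is, by forming $\Perf(U)\otimes_{\Perf(T^*(X\times\A^1)[2])}\Hom(C_f,C_g)$. One direction uses Theorem 5.1.4: any morphism factoring through an object $N$ with $SS(N)\subset\{\tau=0\}$ lies in a $\Hom$ whose support is contained in $\{\tau=0\}$, so it dies on $U$. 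The converse should follow by the same limit/Ind argument as in Theorem 5.1.5. The restriction to $U$ then amounts to inverting the degree-$2$ generator $\tau$, i.e. tensoring over $k[\tau]$ with $k[\tau^{\pm1}]$.

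\textbf{Step 3: identifying $\tau$ with $\beta$.} On $\Coh(Z_f\times_{X\times\A^1}Z_g)=\Coh(X_0)$, the $\mathcal O_{T^*[2]}$-action restricted to the conormal directions of $Z_f$ and $Z_g$ (Theorem 5.1.3) factors through the normal direction $dy$. Concretely, $\mathcal L_0\A^1=\Spec k[\eta]$ acts by loop composition on $X_0=X\times_{\A^1}0$, and $\tau$ is its Koszul dual $\beta$. Then
$$\Hom_{\mathcal C^{pre}(X)}(C_f,C_g)=\Coh(X_0)\otimes_{\Perf(k[\beta])}\Perf(k(\beta))\simeq\Coh(X_0)/\Perf(X_0),$$
where the last equivalence is \cite[Proposition 3.4.1]{preygel2011thomsebastianidualitymatrix}.

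\textbf{Main obstacle.} The hardest part is Step 2: showing that $\Hom$ in the Verdier quotient of the presentable category $\perfxamod$ is exactly the localization of the $\Hom$ category at $\tau$. This needs a Bousfield/recollement statement, namely a right adjoint to the inclusion of the subcategory of modules with $SS\subset\{\tau=0\}$, compatible with the central action. A secondary difficulty is checking carefully that the central $\tau$ agrees with Preygel's $\beta$ up to a unit. I would verify this locally, using the explicit Koszul resolution above Definition 5.1.1 applied to $Z_f\cap Z_g\subset Z_f$.
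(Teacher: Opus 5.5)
Your proposal follows essentially the same route as the paper. First compute the unlocalized $\Hom$ as $\Coh$ of the derived intersection $\{y+f=0\}\times_{X\times\A^1}\{y+g=0\}\simeq X_0$; the paper does this by an explicit Koszul computation with $d\eta=f-g$. Then identify $\Hom_{\mathcal C^{pre}(X)}$ with the restriction to $\{\tau\neq0\}$, and note that inverting $\tau$ on $\Coh(X_0)$ kills exactly the objects supported on the zero section, which gives $\Coh(X_0)/\Perf(X_0)$. For that last step the paper cites Arinkin--Gaitsgory, and it records your Preygel loop-action identification of $\tau$ in the remark right after the proof.

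The step you flag as the main obstacle, that morphisms in the quotient are computed by this restriction to $\{\tau\neq0\}$, is simply asserted in the paper without proof. So your sketch is no less complete than the source on that point.
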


\begin{proof} 
We have 
$$\Hom_{\Perf(X\times\A^1)}(C_f,C_g) = \Coh(\{y+f=0\}\times_{X\times\A^1}\{y+g=0\}).$$
Working locally by taking open affine $U=\Spec A$,
\begin{align*}
\{y+f=0\}\times_{U\times\A^1}\{y+g=0\} &=\Spec( A[y]/(y+f) \otimes_{A[y]} A[y]/(y+g) )\\
&= \Spec\left(\left( \begin{array}{l}
			A[y,\eta], \\
			|\eta| = -1, d\eta = y+f
			\end{array}\right) \otimes_{A[y]} A[y]/(y+g)\right) \\
&= \Spec\left(\begin{array}{l}
			A[\eta], |\eta|=-1,\\
			d\eta = f - g
			\end{array}\right) \\
&= \Spec A \times_{\A^1} 0.
\end{align*}

So we see that
$$\Hom_{\Perf(X\times\A^1)}(C_f,C_g) = \Coh(X_0).$$

Now the $\Hom$ in $\C^{pre}(X)$ is the restriction of the topological sheaf $\Hom_{\Perf(X\times\A^1)}$ to $\{\tau \neq 0\}$, i.e. away from the zero section of the $(\A^1)^\vee$ direction.  By \cite[Proposition 4.2]{10.1093/imrn/rns125} and \cite[Appendix H]{AG}, we see that formally inverting $\tau$ corresponds to quotienting the objects that have support on the zero section, and this gives the category of singularities of $X_0$, i.e.
$$\Hom_{\mathcal C^{pre}(X)}(C_f,C_g) = \Perf(\{\tau \neq 0\}) \otimes_{\Perf(T^*(X\times \A^1)[2])} \Coh(X_0) = \Coh(X_0)/\Perf(X_0).$$
\end{proof}

\begin{rmk}
We can also see that $\mathcal L_0\A^1$ the based loops of $\A^1$ at $0$ acts on $X_0$ by loop composition and loop addition and both are equivalent by the Eckmann--Hilton argument \cite{preygel2011thomsebastianidualitymatrix}.  This addition passes through the equivalence of $X_0$ and $\{y+f(x)=0\}\times_{X\times\A^1}\{y+g(x)=0\}$.  The $\tau$ action that comes from $\mathcal L\A^1$ is equivalent to the loop addition and inverting $\tau$ would give the category of singularities \cite{preygel2011thomsebastianidualitymatrix}.
\end{rmk}

Similarly we can consider the objects as in Example 6.1.9
$$C_{f,Z} = \Perf(y+f = 0, Z \, \,\mathrm{smooth}\,\,\mathrm{subscheme}),$$
with $\musupp(C_{f,Z}) = df + T^*_ZX$.

\begin{thm}
\label{hom_thm}
$$\Hom_{\mathcal C^{pre}(X)}(C_{f,Y},C_{g,Z}) = \Coh((Y\times_X Z)_0) / \Perf(Y\times_X Z)_0),$$
the category of singularities of the fiber $Y\times_X Z \rightarrow \A^1$ at 0.
\end{thm}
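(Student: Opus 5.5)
The plan is to follow the same two-stage structure as the proof of the preceding theorem for $C_f$, $C_g$: first compute the $\Hom$ in $\perfxamod$ as coherent sheaves on a derived intersection, then microlocalize away from $\{\tau=0\}$.

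\emph{Step 1: the $\Hom$ in $\perfxamod$.} Write $C_{f,Y}=\Perf(W_{f,Y})$ and $C_{g,Z}=\Perf(W_{g,Z})$, where
$$W_{f,Y}=\{y+f=0\}\cap(Y\times\A^1), \qquad W_{g,Z}=\{y+g=0\}\cap (Z\times \A^1).$$
Both are smooth closed subschemes of $X\times\A^1$: each is the graph of a function on $Y$ (resp.\ $Z$), so it is isomorphic to $Y$ (resp.\ $Z$). Since $W_{f,Y}\to X\times\A^1$ is proper and $W_{g,Z}$ is Noetherian, Theorem 2.22(2) gives
$$\Hom_{\Perf(X\times\A^1)}(C_{f,Y},C_{g,Z})=\Coh(W_{f,Y}\times_{X\times\A^1}W_{g,Z}).$$
The remaining content of this step is to identify the derived fiber product with $(Y\times_X Z)_0$, the derived fiber over $0$ of $g-f\colon Y\times_X Z\to\A^1$. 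I would do this by rearranging fiber products:
$$W_{f,Y}\times_{X\times\A^1}W_{g,Z}\simeq (Y\times_X Z)\times_{\A^1\times\A^1}\Delta_{\A^1},$$
where the map $Y\times_X Z\to\A^1\times\A^1$ is $(-f,-g)$. Pulling back the diagonal of $\A^1$ along $(-f,-g)$ is the same as taking the fiber of $g-f$ at $0$. Locally on $U=\Spec A$ this is exactly the Koszul computation from the previous proof: we adjoin $\eta$ with $|\eta|=-1$ and $d\eta=y+f$, tensor over $A[y]$, and replace $A$ by the derived ring of $Y\times_X Z$. The result is $\mathcal O_{Y\times_XZ}[\eta]$ with $d\eta=f-g$. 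These local identifications glue because they are canonical fiber-product manipulations.

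\emph{Step 2: localization.} The center action on this $\Hom$ factors through $\mathcal L_0\A^1$ acting on the fiber, as in the Remark after the previous theorem. The $\tau$-coordinate of $T^*(X\times\A^1)[2]$ acts as the degree-2 Koszul dual generator $\beta$ coming from $\eta$. Restricting the topological sheaf to $\{\tau\neq0\}$ is therefore
$$\Perf(\{\tau\neq0\})\otimes_{\Perf(T^*(X\times\A^1)[2])}\Coh((Y\times_XZ)_0)\simeq \Coh((Y\times_XZ)_0)\otimes_{\Perf(k[\beta])}\Perf(k(\beta)).$$
By Preygel's result (Proposition 3.4.1 in the cited work) together with \cite[Proposition 4.2]{10.1093/imrn/rns125} and \cite[Appendix H]{AG}, this quotient is $\Coh((Y\times_XZ)_0)/\Perf((Y\times_XZ)_0)$.

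\emph{Main obstacle.} I expect the hardest part to be in Step 2, for two reasons.
\begin{itemize}
\item One must verify that the $\tau$-action on $\End$ really coincides with the $\beta$-action from $\mathcal L_0\A^1$. This requires checking that the projection $T^*(X\times\A^1)\to T^*\A^1$ is compatible with the composite $Y\times_XZ\to X\times\A^1\to\A^1$.
\item One needs $(Y\times_XZ)_0$ to be eventually coconnective, so that $\Perf\subset\Coh$ and the quotient makes sense. This holds because $Y\times_X Z$ is quasi-smooth, being an intersection of smooth subschemes of a smooth scheme, and the fiber over $0$ adds only one more Koszul generator.
\end{itemize}
If the direct comparison in the first point is awkward, I would reduce to Preygel's universal case $X=\pt$ by base change along $\Coh(\mathcal L_0\A^1)$.
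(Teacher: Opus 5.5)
Your outline is the paper's proof: compute the $\Hom$ in $\Perf(X\times\A^1)$-modules as $\Coh$ of the derived intersection, recognise this as $(Y\times_XZ)_0$, then invert $\tau$. Step 1 is sound. Your global rewriting via $(Y\times_XZ)\times_{\A^1\times\A^1}\Delta_{\A^1}$ is a cleaner substitute for the paper's local Koszul computation, since it needs neither a regular sequence for $I_Y$ nor a lift of $f$.

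\textbf{The gap.} The last sentence of Step 2 does not hold in the generality claimed. The paper's own proof passes over the same point with ``Again, we have''. Write $M=Y\times_XZ$ and $i_0\colon M_0\to M$. Inverting $\tau=\beta$ produces $\Coh(M_0)\otimes_{\Perf(k[\beta])}\Perf(k[\beta^{\pm1}])$, which is $\Coh(M_0)$ modulo its $\beta$-nilpotent objects.
\begin{itemize}
\item Since $\mathrm{fib}(\beta\colon\mathcal F\to\mathcal F[2])\simeq i_0^*i_{0*}\mathcal F$, this kernel is exactly $\Perf(M_0)$ when $\Coh(M)=\Perf(M)$. That is the situation in Preygel's comparison, and in the previous theorem, where $M=X$.
\item Your $M$ is only quasi-smooth. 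For every $\mathcal G\in\Coh(M)$, $\beta$ vanishes on $i_0^*\mathcal G$, because the counit $i_0^*i_{0*}i_0^*\mathcal G\to i_0^*\mathcal G$ is split by the triangle identity.
\item When $Y$ and $Z$ are not transverse, such objects need not be perfect. So inverting $\beta$ can kill strictly more than $\Perf(M_0)$.
\end{itemize}
The real issue is therefore regularity of $M$, not the eventual coconnectivity you flagged.

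\textbf{Counterexample.} Take $X=\A^1$, $Y=Z=\{0\}$ and $f=g=0$.
\begin{itemize}
\item Then $M_0=\Spec k[\epsilon,\eta]$ with $|\epsilon|=|\eta|=-1$ and zero differential.
\item Koszul duality identifies $\Coh(M_0)$ with $\Perf(k[\xi,\tau])$, where $|\xi|=|\tau|=2$. It sends $\Perf(M_0)$ to the modules supported at the origin.
\item Inverting $\tau$ gives $\Perf(k[\xi,\tau^{\pm1}])$, a $2$-periodic $\Perf(\A^1)$. This is the $2$-periodic $\Perf$ of the Lagrangian $T_0^*\A^1\cong\A^1$, in analogy with $\End(C_f)\simeq\MF(X,0)$.
\item By contrast, $\Coh(M_0)/\Perf(M_0)$ is $\Perf(k[\xi,\tau])$ localized away from the origin. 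It still sees the region $\{\tau=0,\ \xi\neq0\}$ that $\mathcal C^{pre}(X)$ is meant to discard.
\item For instance, $k[\eta]=i_0^*k$ is the pullback of the non-perfect $k[\epsilon]$-module $k$ and corresponds to $k[\xi,\tau]/(\tau)$. It is nonzero in $\Coh/\Perf$ but vanishes once $\tau$ is inverted.
\end{itemize}

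\textbf{What the argument proves.} Your argument, like the paper's, actually shows
$$\Hom_{\mathcal C^{pre}(X)}(C_{f,Y},C_{g,Z})\simeq\Coh((Y\times_XZ)_0)\otimes_{\Perf(k[\beta])}\Perf(k[\beta^{\pm1}]),$$
which is Preygel's matrix factorization category of $(Y\times_XZ,\,g-f)$. It equals the category of singularities only under an extra hypothesis such as $Y\times_XZ$ regular near the zero fibre. This holds, for example, when $Y$ and $Z$ meet transversally or when one of them is $X$.
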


\begin{proof}
We have a similar calculation:
$$\Hom_{\Perf(X\times\A^1)}(C_{f,Y},C_{g,Z}) = \Coh(\{y+f=0\,\, \mathrm{in}\,\,Y\}\times_{X\times\A^1}\{y+g=0 \,\,\mathrm{in}\,\,Z\}).$$
Working locally on $U = \Spec A$, such that $Y$ and $Z$ are locally given by $I_Y$ and $I_Z$ respectively and that $f_i$ generates $I_Y$, we have
\begin{align*}
& \{y+f=0\,\, \mathrm{in}\,\,Y\times\A^1\}\times_{X\times\A^1}\{y+g=0 \,\,\mathrm{in}\,\,Z\times\A^1\} \\
&= \Spec(A[y] / (I_Y, \, y+f) \otimes_{A[y]} A[y] / (I_Z,\, y+g)) \\
&= \Spec\left(\left(\begin{array}{l}
			A[y,\epsilon_i,\eta], \\
			|\epsilon_i| = |\eta| = -1, d\epsilon_i = f_i, d\eta = y+f
			\end{array}\right) \otimes_{A[y]} A[y] / (I_Z, \, y+g)\right)\\
&= \Spec\left(\begin{array}{l}
			(A / I_Z)[\epsilon_i,\eta], |\epsilon_i| = |\eta| = -1,\\
			d\epsilon_i = f_i, d\eta = f-g
			\end{array}\right) \\
&= \Spec(A/I_Y \otimes_A A/I_Z) \times_{\A^1} 0.
\end{align*}
So we see that 
$$\Hom_{\Perf(X\times\A^1)}(C_{f,Y},C_{g,Z}) = \Coh((Y\times_X Z)_0),$$
where the map $f-g$ are given by $Y\times_X Z\rightarrow Y \xrightarrow{f} \A^1$ and $Y\times_X Z\rightarrow Z \xrightarrow{g} \A^1$.  Again, we have
$$\Hom_{\mathcal C^{pre}(X)}(C_{f,Y},C_{g,Z}) = \Coh((Y\times_X Z)_0) / \Perf(Y\times_X Z)_0),$$
the category of singularities of the fiber.
\end{proof}

\begin{rmk}
By Theorem 5.1.7, we see that
\begin{align*}
\supp \Hom_{\C^{pre}(X)}(C_f, C_g) &\subseteq SS(C_f)\cap SS(C_g) \cap \{\tau\neq0\} \\
&= \{(x,-f(x),c\cdot df,c): f(x)=g(x), df = dg, c\neq0\}.
\end{align*}
This is not quite right.  The additional condition of $f=g$ is due to the fact that given a Lagrangian, there is an $\A^1$ worth of choices of functions $f: L\rightarrow k$, each differ one another by a shift.  Up until now, we have not identified objects that differ by translation, i.e. we have not quotiented out the $\A^1$ action.
\end{rmk}

We first define the map
$$T_s: X\times \A^1 \rightarrow X\times \A^1, \quad (x,y)\mapsto (x,y+s),$$
with $s$ closed point of $\A^1$.

\begin{defn}
We define $\mathcal C(X)$ with objects $C_{f,Z}$ and their finite colimits.  The $\musupp$ of these objects give $df + T_Z^*X$.  When $f=0$, this gives the conormal $T_Z^*X$ and when $Z=X$, this gives the graph $\Gamma_{df}$.

We define the Hom's in $\mathcal C(X)$ to be
$$\Hom_{\mathcal C(X)}(M,N) = \bigoplus_{s\in k} \Hom_{\mathcal C^{pre}(X)}(M, \res_{T_s} N).$$
If $F\in \Hom_{\mathcal C^{pre}(X)}(M, \res_{T_a}N)$ and $G\in \Hom_{\mathcal C^{pre}(X)}(N, \res_{T_b}L)$ then we $GF = \res_{T_a}(G)F \in \Hom_{\mathcal C^{pre}(X)}(M,\res_{T_{a+b}}L)$.  This construction is known as orbit categories in \cite{keller2005}\cite{fan2025dgenhancedorbitcategories}, skew categories in \cite{0663b65d-0d4f-3e31-9254-503dec3a6488} and $\mathcal{SC}$ in \cite{Chen_2024}.
\end{defn}

\begin{rmk}
Since $\A^1$ acts on $X\times \A^1$ by translation, we have an action
$$\Coh_c(\A^1)\otimes \Perf(X\times\A^1)\rightarrow \Perf(X\times\A^1),$$
and hence also an action its modules, where $\Coh_c(\A^1)$ is the category of coherent sheaves on $\A^1$ with finite support with convolution as its monoidal product.  A larger part of the story should be described by the coinvariants construction \cite{T} given by
$$\mathcal C^{pre}(X) \otimes_{\Coh_c(\A^1)} \mathrm{f.d.}\,\Vect.$$
We should really think $\mathcal C(X)$ as a subcategory of the coinvariants, and the Hom construction above given by direct sum which is really a colimit would then be the Hom in the coinvariants restricted to the objects mentioned above.  In \cite{benzvi2025potentcategoricalrepresentations}, instead of working with coinvariants and modding out the action, Ben-Zvi and Nadler remembers the $\GG_a$ action by working with the periodization as a module of the periodic base with the convolution symmetric monoidal structure.
\end{rmk}

\begin{lem}
\cite{fan2025dgenhancedorbitcategories}\cite{Chen_2024}\cite{0663b65d-0d4f-3e31-9254-503dec3a6488} The objects $\res_{T_s}M$ are isomorphic in $\C(X)$.
\end{lem}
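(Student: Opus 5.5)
The plan is to exhibit, for each $s\in k$, mutually inverse morphisms between $M$ and $\res_{T_s}M$ in $\C(X)$, using the identity maps placed in the appropriate summands of the orbit-category Hom. By definition,
$$\Hom_{\C(X)}(M,\res_{T_s}M) = \bigoplus_{t\in k}\Hom_{\C^{pre}(X)}(M,\res_{T_t}\res_{T_s}M).$$

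First I will record the functoriality $\res_{T_t}\res_{T_s}\simeq \res_{T_{s+t}}$ and $\res_{T_0}=\mathrm{Id}$. These hold because $T_s\circ T_t = T_{s+t}$, and $\res$ along a composite is the composite of restrictions, coherently so since pullback $T^*$ is functorial.

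Next I will check that $\res_{T_s}$ preserves the subcategory $\{SS\subset\{\tau=0\}\}$, so that it descends to $\C^{pre}(X)$. This holds because $T_s$ is an automorphism of $X\times\A^1$ whose cotangent lift $(x,y,\xi,\tau)\mapsto(x,y-s,\xi,\tau)$ fixes $\tau$. As a result, singular supports are translated in the $y$ direction only, which follows from Definition 5.1.1 by transport of structure along the monoidal autoequivalence $T_s^*$ of $\Perf(X\times\A^1)$. Since $\res_{T_s}$ is an equivalence of module categories, it also gives isomorphisms on $\Hom_{\C^{pre}(X)}$.

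I then take $\phi\in\Hom_{\C^{pre}(X)}(M,\res_{T_{-s}}\res_{T_s}M)=\Hom(M,M)$ to be the identity, placed in the $t=-s$ summand. This is a morphism $M\to\res_{T_s}M$ in $\C(X)$. Similarly, I take $\psi\in\Hom_{\C^{pre}(X)}(\res_{T_s}M,\res_{T_s}M)$ to be the identity, placed in the $t=s$ summand, giving a morphism $\res_{T_s}M\to M$. Using the composition rule $GF=\res_{T_a}(G)F$, the composite $\psi\phi$ lands in the summand $a+b=-s+s=0$ of $\Hom_{\C(X)}(M,M)$ and equals $\res_{T_{-s}}(\mathrm{id})\circ\mathrm{id}=\mathrm{id}_M$. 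The composite $\phi\psi$ is likewise the identity of $\res_{T_s}M$ in the $0$ summand.

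The main obstacle is not this formal computation, which is the standard orbit-category argument of the cited references, but making composition in $\C(X)$ well defined and associative at the $\infty$-categorical level. This requires the $\A^1(k)$-action on $\C^{pre}(X)$ to be a genuine (coherent) action, not just a family of functors. I would obtain this by viewing $s\mapsto T_s$ as a functor $Bk^{\delta}\to\mathrm{Aut}(X\times\A^1)$ and postcomposing with $\res$, giving a functor $Bk^\delta\to\mathrm{Cat}$ landing in module categories. Then $\C(X)$ is (the full subcategory on the given objects of) the orbit category of that action, for instance as the Grothendieck construction followed by localization, or via the dg-enhanced orbit category of \cite{fan2025dgenhancedorbitcategories}. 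In that framework the isomorphism $M\simeq\res_{T_s}M$ is the canonical one supplied by the cocartesian edge over $s$.
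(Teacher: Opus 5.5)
Your proof is correct and is the same as the paper's: the paper also places $\id_M$ in the $t=-s$ summand of $\Hom_{\C(X)}(M,\res_{T_s}M)$ and $\id_{\res_{T_s}M}$ in the $t=s$ summand of $\Hom_{\C(X)}(\res_{T_s}M,M)$. It then checks with the rule $GF=\res_{T_a}(G)F$ that both composites are identities.

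Your two additional checks go beyond the paper, which takes the orbit-category composition as given. The first is that $\res_{T_s}$ preserves $\{SS\subset\{\tau=0\}\}$. The second is that the $k$-action must be coherent for composition in $\C(X)$ to be well defined and associative.
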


\begin{proof}
Let $\mathcal D^{pre}(X)$ to be the full subcategory of $\C^{pre}(X)$ generated by objects $C_{f,Z}$ and their finite colimits.  We have a canonical map $Q: \mathcal D^{pre}(X)\rightarrow \C(X)$ by sending objects to objects, and 
$$f\in\Hom_{\C^{pre}(X)}(M,N)\subseteq \bigoplus_{s\in k}\Hom_{\C^{pre}(X)}(M,\res_{T_s}N).$$
Consider
\begin{align*}
\id_{M,\C^{pre}(X)}\in\Hom_{\C^{pre}(X)}(M,M)&=\Hom_{\C^{pre}(X)}(M,\res_{T_{-a}}\res_{T_a}M) \\
&\subseteq \Hom_{\C(X)}(M,\res_{T_a}M),
\end{align*}
and
$$\id_{\res_{T_a}M,\C^{pre}(X)}\in \Hom_{\C^{pre}(X)}(\res_{T_a}M,\res_{T_a}M)\subseteq \Hom_{\C(X)}(\res_{T_a}M,M).$$
Then we see that
\begin{align*}
\id_{\res_{T_a}M,\C^{pre}(X)}\id_{M,\C^{pre}(X)} &= \res_{T_{-a}}(\id_{\res_{T_a}M,\C^{pre}(X)})\id_{M,\C^{pre}(X)} \\
&= \id_{M,\C^{pre}(X)}\in \Hom_{\C^{pre}(X)}(M,M),
\end{align*}
which is $\id_{M,\C(X)}\in\Hom_{\C(X)}(M,M)$.  Similarly we have
$$\id_{M,\C^{pre}(X)}\id_{\res_{T_a}M,\C^{pre}(X)} = \id_{\res_{T_a}M,\C^{pre}(X)} = \id_{\res_{T_a}M,\C(X)} \in \Hom_{\C(X)}(\res_{T_a}M,\res_{T_a}M).$$
\end{proof}

Now we revisit Theorem 6.1.10 and Remark 6.1.12 and see that the Hom between $C_f$ and $C_g$ are given by the (augmented) category of matrix factorizations.
\begin{thm}
\label{mf_thm_simple}
$\Hom_{\C(X)}(C_f,C_g) = \MF(X,g-f)$.
\end{thm}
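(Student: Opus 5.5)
By Definition 6.1.15, the plan is to write
$$\Hom_{\C(X)}(C_f,C_g) = \bigoplus_{s\in k}\Hom_{\C^{pre}(X)}(C_f,\res_{T_s}C_g),$$
and then identify each summand with $\mathrm{DSing}(X_s)$ for the potential $g-f$. Comparing with the definition $\MF(X,W)=\bigoplus_{s\in\A^1}\mathrm{DSing}(X_s)$ from Section 3.2, with $W=g-f$, then gives the statement term by term.

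The first step is to identify $\res_{T_s}C_g$. Restriction along the automorphism $T_s$ of $X\times\A^1$ replaces $\Perf(\{y+g=0\})$ by $\Perf(T_s^{-1}\{y+g=0\})=\Perf(\{y+s+g=0\})$, with the module structure twisted accordingly. Hence $\res_{T_s}C_g\simeq C_{g+s}$ as $\Perf(X\times\A^1)$-modules. I will check this at the level of the action map $\Perf(X\times\A^1)\otimes\Perf(Z)\to\Perf(Z)$, where $T_s^*$ is precomposed; since $T_s$ is an isomorphism, this is just a change of the embedding of $Z$. This step is routine. The only thing to fix is the sign convention, $g+s$ versus $g-s$, and that is harmless because $s$ ranges over all of $k$.

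Next I apply Theorem 6.1.10 (or its proof) with $g$ replaced by $g+s$. The local computation in that proof gives
$$\{y+f=0\}\times_{X\times\A^1}\{y+g+s=0\}\simeq \Spec A[\eta],\quad d\eta=f-g-s.$$
This is the derived fiber of $g-f$ over the point $-s$, or equivalently of $f-g$ over $s$. Inverting $\tau$ as in that proof then yields $\Hom_{\C^{pre}(X)}(C_f,C_{g+s})\simeq \Coh(X_{s'})/\Perf(X_{s'})=\mathrm{DSing}(X_{s'})$, where $X_{s'}$ is the fiber of $g-f$ at $s'=\mp s$. Reindexing the direct sum by $s'$ turns it into $\bigoplus_{s'\in\A^1}\mathrm{DSing}(X_{s'})=\MF(X,g-f)$. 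If $\MF$ is meant in the $\ZZ/2$-graded sense, I will additionally invoke the equivalence $\MF_{s}(X,W)\simeq\mathrm{DSing}(X_s)$ after idempotent completion, recalled in Section 3.2.

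I expect the main obstacle to be compatibility with composition rather than the objectwise identification. The direct sum in Definition 6.1.15 comes with the twisted composition $GF=\res_{T_a}(G)F$. For the equivalence to respect composition, I need the identifications $\Hom_{\C^{pre}}(C_f,C_{g+s})\simeq\mathrm{DSing}((X)_{s})$ to be natural in the sense that the convolution $\mathrm{DSing}(X_a)\otimes\mathrm{DSing}(X_b)\to\mathrm{DSing}(X_{a+b})$ induced by the fiber products $\{y+f=0\}\times\{y+g+a=0\}\times\{y+h+a+b=0\}$ matches composition in $\C(X)$. I would first try to do this via the kernel description of Theorem 2.22, where composition of functors corresponds to convolution of coherent kernels. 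Then I would check that applying $\res_{T_a}$ simply translates kernels, so that it is compatible with passing to the quotient by objects supported on $\tau=0$. If this proves delicate, I would state the theorem as an equivalence of the Hom categories for fixed $f,g$, which is what the statement literally asserts. Under that reading the composition check becomes a remark.
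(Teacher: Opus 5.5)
Your proposal is correct and follows the paper's proof. In both, you expand the Hom via the orbit-category definition, identify $\res_{T_s}C_g$ with a shifted $C_{g\mp s}$, and apply the computation of Theorem 6.1.10 to obtain $\mathrm{DSing}$ of the fiber of $g-f$ at each point, then reindex the direct sum to match the definition of $\MF(X,g-f)$. Two small remarks: with $\res_{T_s}$ defined via $T_s^*$, the restricted module is $\Perf(\{y+g=0\})$ embedded by $T_s\circ i$, whose image is $\{y+g-s=0\}$, so the paper's $C_{g-s}$ is the correct sign (harmless after reindexing, as you note); and your compatibility-with-composition check goes beyond what the paper verifies.
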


\begin{proof}
Using the definition of $\Hom$ in $\C(X)$, we have
\begin{align*}
\Hom_{\C(X)}(C_f,C_g) &= \bigoplus_{s\in k} \Hom_{\C^{pre}(X)}(C_f,\res_{T_s}C_g) \\
&= \bigoplus_{s\in k} \Hom_{\C^{pre}(X)}(C_f,C_{g-s}) \\
&= \bigoplus_{s\in k} \Coh(X_{s}) / \Perf(X_{s}) \\
&= \MF(X, g-f),
\end{align*}
where $X_s$ is the fiber of $g-f$ at $s$.
\end{proof}

\begin{thm}
$\rho(\supp\Hom_{\C(X)}(C_f,C_g)\cap\{\tau\neq0\})\subseteq \Gamma_{df}\cap \Gamma_{dg}.$
\end{thm}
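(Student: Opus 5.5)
The plan is to reduce the statement to the support estimate of Theorem 5.1.4 applied summand by summand to the direct sum defining $\Hom_{\C(X)}$. Here the support of a direct sum of topological sheaves of categories over $T^*(X\times\A^1)[2]$ is the union of the supports of the summands. By definition,
$$\Hom_{\C(X)}(C_f,C_g)=\bigoplus_{s\in k}\Hom_{\C^{pre}(X)}(C_f,\res_{T_s}C_g)=\bigoplus_{s\in k}\Hom_{\C^{pre}(X)}(C_f,C_{g-s}).$$
Hence it suffices to prove, for each fixed $s$, that
$$\rho\bigl(\supp\Hom_{\C^{pre}(X)}(C_f,C_{g-s})\cap\{\tau\neq0\}\bigr)\subseteq\Gamma_{df}\cap\Gamma_{dg},$$
and then take the union over $s$.

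For fixed $s$, I would first check the finiteness hypothesis of Theorem 5.1.4. The modules $C_f$ and $C_{g-s}$ are of the form $\Perf(Z)$ for the smooth graph hypersurfaces $\{y+f=0\}$ and $\{y+g-s=0\}$. By the proof of Theorem 5.1.3, $\Ext^*(\unit_{\End})$ is $\mathcal O_{T^*_Z(X\times\A^1)[2]}$, which is finitely generated over $\mathcal O_{T^*(X\times\A^1)[2]}$. Theorem 5.1.4 then gives that the support of $\Hom_{\Perf(X\times\A^1)}(C_f,C_{g-s})$ is contained in $SS(C_f)\cap SS(C_{g-s})$. The Hom in $\C^{pre}(X)$ is the restriction of this topological sheaf to $\{\tau\neq0\}$, as in the proof of Theorem 6.1.10, so its support is contained in $SS(C_f)\cap SS(C_{g-s})\cap\{\tau\neq0\}$. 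By the explicit description $SS(C_h)=\{(x,-h(x),c\,dh,c)\}$, this intersection is, as in Remark 6.1.12,
$$\{(x,-f(x),c\,df_x,c): f(x)=g(x)-s,\ df_x=dg_x,\ c\neq0\}.$$

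Applying $\rho$, which sends $(x,y,\xi,\tau)\mapsto(x,\xi/\tau)$, the point above maps to $(x,df_x)=(x,dg_x)$, which lies in $\Gamma_{df}\cap\Gamma_{dg}$. The constraint $f(x)=g(x)-s$ disappears under $\rho$, since it only fixes the $y$-coordinate, and the union over $s$ is harmless. This is precisely the point that the orbit-category construction was meant to address.

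The main obstacle I expect is justifying that restricting to $\{\tau\neq0\}$, that is, tensoring with $\Perf(\{\tau\neq0\})$ over $\Perf(T^*(X\times\A^1)[2])$, does not enlarge the support and commutes with the infinite direct sum. I would handle this summandwise, using the same localization argument as in the proof of Theorem 5.1.5. Passing to $\mathrm{Ind}$, $\QCoh(U)\otimes_{\QCoh(T^*(X\times\A^1)[2])}(-)$ preserves colimits, so an open set $U$ disjoint from every summand's support kills the whole sum. A secondary subtlety is that $\rho$ is only defined on $\{\tau\neq0\}$, and that the support is conic; both are fine because each conic line with $c\neq0$ maps to a single point under $\rho$.
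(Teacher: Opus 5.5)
Your proposal is correct and follows essentially the same route as the paper: split the direct sum over $s\in k$ into a union of supports, bound each summand by $SS(C_f)\cap SS(C_{g-s})\cap\{\tau\neq0\}$ via Theorem 5.1.4, and note that $\rho$ sends each such point $(x,-f(x),c\,df_x,c)$ with $df_x=dg_x$ into $\Gamma_{df}\cap\Gamma_{dg}$. Your extra checks, namely the finite-generation hypothesis of Theorem 5.1.4 and the compatibility of localizing to $\{\tau\neq0\}$ with the infinite direct sum, are details the paper leaves implicit.
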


\begin{proof}
\begin{align*}
\supp\Hom_{\C(X)}(C_f,C_g) \cap\{\tau\neq0\} &= \bigcup_{s\in k} \supp\Hom_{\C^{pre}(X)}(C_f,C_{g-s}) \cap\{\tau\neq0\} \\
&\subseteq \bigcup_{s\in k} SS(C_f) \cap SS(C_{g-s}) \cap \{\tau\neq0\} \\
&= \{(x, y, cdf, c): df = dg, c\neq0\}.
\end{align*}
So $\rho(\supp\Hom_{\mathcal C(X)}(C_f,C_g)\cap\{\tau\neq0\}) \subseteq \Gamma_{df}\cap\Gamma_{dg}$.
\end{proof}

We also calculate the Hom between the general objects of $\mathcal C(X)$ and see that they are given again by augmented matrix factorizations.

\begin{thm}
\label{mf_thm}
$\Hom_{\mathcal C(X)}(C_{f,Y},C_{g,Z}) = \mathrm{MF}(Y\times_X Z, g-f)$.
\end{thm}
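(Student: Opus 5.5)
The plan is to mirror the proof of Theorem \ref{mf_thm_simple}, with Theorem \ref{hom_thm} replacing Theorem 6.1.10. By the definition of $\Hom$ in $\mathcal C(X)$,
$$\Hom_{\mathcal C(X)}(C_{f,Y},C_{g,Z}) = \bigoplus_{s\in k}\Hom_{\mathcal C^{pre}(X)}(C_{f,Y},\res_{T_s}C_{g,Z}).$$

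\textbf{Step 1: identify the translated object.} I will show $\res_{T_s}C_{g,Z}\simeq C_{g-s,Z}$ as $\Perf(X\times\A^1)$-modules. The module $C_{g,Z}$ is $\Perf$ of the closed subscheme $W=\{y+g=0\}\subset Z\times\A^1\subset X\times\A^1$. Restricting along the automorphism $T_s$ replaces $W$ by its preimage under $T_s$, which is $\{y+s+g=0\}=\{y+(g+s)=0\}$ in $Z\times\A^1$. So we get either $C_{g+s,Z}$ or $C_{g-s,Z}$, depending on whether $\res_{T_s}$ means pullback or pushforward along $T_s$. Since the sum runs over all $s\in k$, reindexing $s\mapsto -s$ makes the sign irrelevant. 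This is the same identification already used in the second line of the proof of Theorem \ref{mf_thm_simple}.

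\textbf{Step 2: compute each summand.} Apply Theorem \ref{hom_thm} with $g$ replaced by $g-s$. The local Koszul computation there gives the derived intersection
$$\Spec(A/I_Y\otimes_A A/I_Z)\times_{\A^1}0$$
for the map $f-(g-s)$. Equivalently, this is the derived fiber over $s$ of $g-f$ restricted to the derived scheme $Y\times_X Z$. Writing $(Y\times_X Z)_s$ for this fiber, each summand is
$$\Coh((Y\times_X Z)_s)/\Perf((Y\times_X Z)_s).$$
The category of singularities makes sense here because $Y\times_X Z$ is a derived intersection of smooth subschemes, hence quasi-smooth. Its fibers are therefore eventually coconnective, so $\Perf\subset\Coh$.

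\textbf{Step 3: sum and compare with the definition.} Taking the direct sum over $s$ gives $\bigoplus_{s}\mathrm{DSing}((Y\times_X Z)_s)$. This is by definition the augmented category $\mathrm{MF}(Y\times_X Z,g-f)$ of Section 3.2, now with a derived base. The definition there was stated for $X$, so I will note that it applies verbatim to the derived scheme $Y\times_X Z$.

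\textbf{Main obstacle.} The step needing most care is the compatibility in Step 2 between "fiber at $0$ of $f-(g-s)$" and "fiber at $s$ of $g-f$" on the derived scheme $Y\times_X Z$. In the Koszul model this amounts to $d\eta=f-g+s$ versus $d\eta=(g-f)-s$, which differ only by the sign change $\eta\mapsto-\eta$, so the two fibers are isomorphic. I also need to check that the gluing from local affines in Theorem \ref{hom_thm} is compatible with $T_s$. This follows because $T_s$ only acts on the $\A^1$ coordinate, so the same affine cover of $X$ works uniformly in $s$.
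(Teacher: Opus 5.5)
Your proposal is correct and is essentially the paper's proof, which just says ``by the same calculation of Theorem~\ref{mf_thm_simple} and Theorem~\ref{hom_thm}.'' The steps are the same: expand the orbit-category $\Hom$ as $\bigoplus_{s}$, identify $\res_{T_s}C_{g,Z}$ with $C_{g-s,Z}$, apply Theorem~\ref{hom_thm} to get $\mathrm{DSing}$ of the fiber of $g-f$ over $s$ on $Y\times_X Z$, and reassemble these into the augmented $\mathrm{MF}(Y\times_X Z,g-f)$. Your added checks on the sign and reindexing, on quasi-smoothness giving eventual coconnectivity, and on the gluing are sound refinements of that same argument.
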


\begin{proof}
By the same calculation of Theorem 6.1.17 and Theorem 6.1.12.
\end{proof}

\begin{rmk}
When $Y\times_X Z$ is not regular, the $\Hom$ is not the category of B-branes of Kontsevich \cite{Orlov_2003}.
\end{rmk}

\subsection{The Case of $X = \pt$}
Now we look at the case where $X = \pt$.  $\C(\pt)$ is generated by the objects $\Perf(y=c)$ which we will abbreviate as $\Perf(c)$.  Let $\mathfrak U = \Perf(k[u,u^{-1}])$, which is the 2-periodization of vector spaces.  We have the following theorem:

\begin{thm}
$\mathcal C(\mathrm{pt}) = (\mathfrak U$-\emph{mod}$)^\omega$, the compact objects of $\mathfrak U$-\emph{mod}.
\end{thm}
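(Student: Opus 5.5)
The plan is to recognize $\mathcal C(\pt)$ as a small stable idempotent-complete category generated by a single object, and to identify that object's endomorphism algebra with $k[u,u^{-1}]$. Schwede--Shipley / Morita theory then gives $\mathcal C(\pt)\simeq \Perf(\End(G)) = (\mathfrak U\text{-mod})^\omega$ for a generator $G$.

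\textbf{Step 1: one generator.} For $X=\pt$ the only smooth subscheme is $\pt$ itself, so the objects $C_{f,Z}$ are just the $\Perf(c)=C_{-c}$ with $c\in k$. By Lemma 6.1.15 the objects $\res_{T_s}\Perf(c)=\Perf(c-s)$ are all isomorphic in $\mathcal C(\pt)$. Hence every object is a finite colimit of copies of $G:=\Perf(0)$, and, after idempotent completion (which is implicit in working in $\mathrm{Cat}_\infty^{\mathrm{perf}}$), $G$ is a compact generator.

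\textbf{Step 2: the endomorphism algebra.} Apply Theorem \ref{mf_thm_simple} with $X=\pt$ and $f=g=0$:
$$\End_{\mathcal C(\pt)}(G)=\MF(\pt,0)=\bigoplus_{s\in k}\mathrm{DSing}(\pt_s).$$
For $s\neq 0$ the fiber $\pt\times_{\A^1}s$ of the zero map is empty, so those summands vanish. For $s=0$ the fiber is $\mathcal L_0\A^1=\Spec k[\eta]$ with $|\eta|=-1$, as in Example 4.3.1 and the computation in Theorem 6.1.10. This leaves $\Coh(k[\eta])/\Perf(k[\eta])$. By the Koszul duality $\Coh(k[\eta])\simeq\Perf(k[\beta])$ from Section 3.1, this quotient becomes $\Perf(k[\beta])\otimes_{\Perf(k[\beta])}\Perf(k(\beta))=\Perf(k[\beta,\beta^{-1}])$, with $|\beta|=2$, which is $\mathfrak U$. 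Equivalently, in the language of Theorem 6.1.10 and its proof, inverting $\tau$ on $\Coh(k[\eta])$, where $\tau$ is the degree-2 operator coming from $T^*\A^1[2]$, gives $k[u,u^{-1}]$-modules.

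What we actually need is the endomorphism algebra of $G$ itself, not just this category. The category is $\Hom(G,G)$, and $G$'s identity corresponds to the unit, namely the augmentation module $k$ of $k[\eta]$. Its endomorphisms $\Ext^*_{k[\eta]}(k,k)=k[\beta]$ become $k[\beta,\beta^{-1}]=k[u,u^{-1}]$ after localization. In the same step I will check that the $s=0$ summand gives exactly the same answer under the composition rule of Definition 6.1.14.

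\textbf{Step 3: conclude.} With a compact generator $G$ whose endomorphism algebra is $A=k[u,u^{-1}]$, the functor $\Hom(G,-)$ gives $\mathcal C(\pt)\simeq \Perf(A)=(\mathfrak U\text{-mod})^\omega$. Here $\mathfrak U$-mod is read as $\mathrm{Ind}\,\mathfrak U$, i.e.\ modules over $k[u,u^{-1}]$; the compact objects of a module category are the idempotent completion of the thick closure of the free module.

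\textbf{Main obstacle.} The hard step is making ``$\End(G)$'' meaningful in $\mathcal C(\pt)$. There, Hom is a category (a $2$-categorical Hom between module categories), while the statement treats $\mathcal C(\pt)$ as a $1$-category. I will take the plan of reading the $1$-morphisms of $\mathcal C(\pt)$ as objects of these Hom categories, with mapping spaces given by Hom inside $\MF(\pt,0)$. Then I will verify two things: that the colimit-based orbit-category composition preserves the $k[u,u^{-1}]$-algebra structure (the Eckmann--Hilton argument of Remark 6.1.11 should give commutativity), and that the colimit closure matches the thick closure.
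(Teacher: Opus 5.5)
\textbf{The gap: Morita theory at the wrong categorical level.} Your Steps 1 and 2 agree with the paper. $\Perf(0)$ generates because the $\Perf(c)$ are identified in $\mathcal C(\pt)$ by the orbit-category lemma. The only surviving summand of $\Hom_{\mathcal C(\pt)}(\Perf(0),\Perf(c))$ is $\Coh(k[\eta])/\Perf(k[\eta])\simeq\Perf(k[u,u^{-1}])=\mathfrak U$, which is exactly the paper's computation.

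The problem is Step 3. In $\mathcal C(\pt)$ the objects are $\Perf(\A^1)$-module categories and the Hom's are themselves categories. So the endomorphism object of $G=\Perf(0)$ \emph{is} the monoidal category $\mathfrak U$, and there is no further ``endomorphism algebra of $G$'' to extract. The ring $\Ext^*_{k[\eta]}(k,k)[\beta^{-1}]=k[u,u^{-1}]$ you compute is the endomorphism ring of the unit object of $\End(G)$, i.e.\ the $2$-endomorphisms of $\id_G$. Applying Schwede--Shipley to it is Morita theory one level too low. $\Hom_{\mathcal C(\pt)}(G,-)$ takes values in categories, not in $k[u,u^{-1}]$-modules, so it cannot produce $\mathcal C(\pt)\simeq\Perf(k[u,u^{-1}])$.

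Your reading of $\mathfrak U$-mod as $\mathrm{Ind}\,\mathfrak U$ then collapses the statement to $\mathcal C(\pt)\simeq\mathfrak U$, which is not what is claimed. In the paper, $\mathfrak U$-mod means module categories over the monoidal category $\mathfrak U$ (the ``2-periodization of vector spaces''), and $\mathfrak U$ itself is the free rank-one object there. The paper's functors are $\mathfrak U\mapsto\Perf(0)$ and $M\mapsto\Hom_{\mathcal C(\pt)}(\Perf(0),M)$, the latter being a category.

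\textbf{The fix: keep Steps 1--2 and stop at $\End(G)\simeq\mathfrak U$.} Then run Morita theory at the categorical level, as the paper does. Regard $M\mapsto\Hom_{\mathcal C(\pt)}(\Perf(0),M)$ as a functor to $\mathrm{Mod}_{\mathfrak U}(\mathrm{Cat}_\infty^{\mathrm{perf}})$, with $\mathfrak U\simeq\End(\Perf(0))$ acting by precomposition. Pair it with $\mathfrak U\mapsto\Perf(0)$. On the generator this is fully faithful, because $\Hom_{\mathfrak U}(\mathfrak U,\mathfrak U)\simeq\mathfrak U\simeq\Hom_{\mathcal C(\pt)}(\Perf(0),\Perf(0))$.

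What actually needs checking is that orbit-category composition on $\End(\Perf(0))$ matches the tensor product on $\Perf(k[u,u^{-1}])$. This goes through the Koszul duality the paper invokes, which exchanges convolution on $\Coh(k[\eta])$ with the tensor product on $\Perf(k[u])$; it is not a question of commutativity of an algebra. Your remaining concern about colimit closure versus compact objects then becomes a question about finite colimits of the free module $\mathfrak U$ inside $\mathfrak U$-mod. The paper's proof also leaves that point implicit.
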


\begin{proof}
We define a functor from the category $\mathfrak{U}$-mod to $\C(\pt)$ by the map
$$\mathfrak{U} \mapsto \Perf(0).$$
We define the functor from $\C(\pt)$ to $\mathfrak{U}$-mod by the map
$$M \mapsto \Hom_{\C(\pt)}(\Perf(0),M).$$
This map is well defined by the following:
\begin{align*}
&\Hom_{\C(\pt)}(\Perf(0),\Perf(c)) \\
&= \bigoplus \Hom_{\C^{pre}(X)}(\Perf(0), \Perf(c+s)) \\
&= \bigoplus \Perf(k[x,u,u^{-1}]) \otimes_{\Perf(k[x,u])} \Hom_{\Perf(\A^1)}(\Perf(0),\Perf(c+s)) \\
&=\bigoplus \Perf(k[x,u,u^{-1}]) \otimes_{\Perf(k[x,u])} \Coh(\{0\}\times_{\A^1} \{c+s\}) \\
&=\Perf(k[x,u,u^{-1}])\otimes_{\Perf(k[x,u])} \Coh(\{0\}\times_{\A^1}\{0\}) \\
&=\Perf(k[x,u,u^{-1}]) \otimes_{\Perf(k[x,u])} \Coh(k[\eta]), |\eta| = -1
\end{align*}
By Koszul duality, we have
$$\Coh(k[\eta]) = \Perf(k[u]), |u| = 2,$$
\sloppy
with duality exchanging the convolution in $\Coh(k[\eta])$ and the usual tensor product in $\Perf(k[u])$, and that the convolution monoidal unit $k[\eta]/(\eta)$ corresponds to the tensor monoidal unit $k[u]$.
So we have
$$\Hom_{\C(\pt)}(\Perf(0),\Perf(c)) = \Perf(k[x,u,u^{-1}]) \otimes_{\Perf(k[x,u])} \Perf(k[u])$$
by Koszul duality, and this is equivalent to
$$\Perf(k[x,u,u^{-1}]) \otimes_{\Perf(k[x,u])} \Perf(k[u]) = \Perf(k[u,u^{-1}]).$$
Composing these functors we see that these two categories are equivalent.
\end{proof}

\printbibliography

\end{document}